\documentclass[a4paper]{amsart}

\usepackage{amsmath}
\usepackage{amsfonts}
\usepackage{amssymb}
\usepackage{amsrefs}
\usepackage{graphicx}
\usepackage{tikz}

\usetikzlibrary{matrix,arrows}
%------------------------------------------------------------
%
\newtheorem{theorem}{Theorem}[section]

\theoremstyle{plain}
\newtheorem{corollary}[theorem]{Corollary}
\newtheorem{lemma}[theorem]{Lemma}
\newtheorem{proposition}[theorem]{Proposition}

\newtheorem*{theoremA}{Theorem A}
\newtheorem*{theoremB}{Theorem B}
\newtheorem*{lemma*}{Lemma}
\newtheorem*{proposition*}{Proposition}

\theoremstyle{definition}
\newtheorem{definition}[theorem]{Definition}
\newtheorem{example}[theorem]{Example}

%-------------------------------------------------------
\newcommand{\CC}{\mathbb{C}}

\newcommand{\KK}{\mathbb{K}}

\newcommand{\RR}{\mathbb{R}}

\newcommand{\calB}{\mathcal{B}}

\newcommand{\calH}{\mathcal{H}}
\newcommand{\calK}{\mathcal{K}}

\newcommand \lcur{[\![}
\newcommand \rcur{]\!]}

\newcommand{\ii}{\sqrt{-1}}

\DeclareMathOperator{\Val}{Val}
\DeclareMathOperator{\vol}{vol}
\DeclareMathOperator{\id}{id}
\DeclareMathOperator{\supp}{supp}
\DeclareMathOperator{\Area}{Area}
\DeclareMathOperator{\glob}{glob}

\DeclareMathOperator{\Sym}{Sym}
\DeclareMathOperator{\Hom}{Hom}
\DeclareMathOperator{\PD}{pd}
\DeclareMathOperator{\contr}{contr}
%--------------------------------------------------------
% additional packages
%
\usepackage{fullpage}
\usepackage{hyperref}
%---------------------------------------------------------------------
%
\begin{document}
\author{Thomas Wannerer}
\address
	{
		\begin{flushleft}	
			Goethe-Universit\"at Frankfurt\\
			Institut f\"ur Mathematik\\
			Robert-Mayer-Str.\ 10\\
			60325 Frankfurt am Main, Germany \\
		\end{flushleft}
	}	
	
	\email{wannerer@mathematik.uni-frankfurt.de}

\title{Integral geometry of unitary area measures}
\thanks{Research supported by DFG grant BE 2484/5-1}
\subjclass[2000]{Primary 53C65; Secondary 52A22}

\begin{abstract} The existence of kinematic formulas for area measures with respect to any connected, closed subgroup of the orthogonal group acting transitively on the unit sphere is established. 
In particular, the kinematic operator for area measures is shown to have the structure of a co-product.
 In the case of the unitary group the algebra associated to this co-product is described explicitly in terms of generators and relations. As a consequence, a simple algorithm that yields explicit kinematic formulas
 for unitary area measures is obtained.  
\end{abstract}

\maketitle

\section{Introduction}
The answer to the question \textit{``What is the expected volume of the Minkowski sum of two convex bodies $K,L\subset \RR^n$ if $L$ is rotated randomly?''} is given by  the additive principal kinematic formula
\begin{equation}
 \label{eq:principal}
\int_{SO(n)} \vol_n(K +gL) \; dg = \sum_{i+j=n} \binom{n}{i}^{-1} \frac{\omega_i\omega_j}{\omega_n} \mu_i(K) \mu_j(L),
\end{equation}
first proved by Blaschke \cite{blaschke55} for $n=2,3$ and in general by Chern \cite{chern52}. Here $SO(n)$ denotes the rotation group equipped with the Haar probability measure,
 $\omega_i$ is the volume of the $i$-dimensional euclidean unit ball and $\mu_i(K)$ is the $i$th intrinsic volume of $K$, a suitably normalized $(n-i-1)$-th order mean curvature integral. The principal kinematic formula 
\eqref{eq:principal} plays a central role in classical integral geometry, since it encompasses many results in euclidean
integral geometry as special or limiting cases (see, e.g., \cites{klain_rota97,santalo04,schneider_weil08} for the history and applications of integral geometry). In this paper we establish a hermitian,
 local version of \eqref{eq:principal}. Here `local' means that the intrinsic volumes may be replaced
by local curvature integrals.

Over the last ten years there have been tremendous advances in integral geometry that completely reshaped the subject. The foundation for this progress was the discovery 
of various algebraic structures on the space of valuations by Alesker \cites{alesker01, alesker03,alesker04}. Here a valuation is a function $\phi\colon \calK(\RR^n)\to \RR$ on the space of convex bodies satisfying 
the additivity property 
\begin{equation}\label{eq:def_val}\phi(K\cup L)= \phi(K) + \phi(L) -\phi(K\cap L)\end{equation}
whenever $K\cup L $ is convex. The theory of valuations, which turned out to be the key to modern integral geometry, is a very rich one and dates back to Dehn's solution of Hilbert's 3rd problem 
(see \cites{abardia_etal12,alesker99b,haberl12, haberl_parapatits13,klain99,ludwig03,ludwig_reitzner10, parapatits_wannerer13, schuster10} and the references therein).

Building on the work of Alesker, it was first realized by Fu \cite{fu06} and then put in more precise terms by Bernig and Fu \cite{bernig_fu06}, that the classical kinematic operators are  co-products
(which is reflected by the bilinear structure of the principal kinematic formula \eqref{eq:principal}) and are in this sense dual to the only recently discovered products on valuations. 
 This crucial discovery provided not 
only an explanation for the algebraic-combinatorial behavior of the numeric constants appearing in kinematic formulas, but also opened the door to determining kinematic formulas explicitly in
 cases different from the euclidean. For example,
the problem of evaluating the integral in \eqref{eq:principal}
 with  $\RR^n$ replaced by $\CC^n$ and
$SO(n)$ by the unitary group $U(n)$ seemed for many years out of reach, but has recently been solved by Bernig and Fu in the landmark paper \cite{bernig_fu11}.
 
In this article we generalize the results of Bernig and Fu \cite{bernig_fu11} and determine the local additive kinematic formulas for the unitary group completely. Using a different approach, 
the problem of localizing the intersectional 
kinematic formulas has only recently been solved by Bernig, Fu, and Solanes \cite{bernig_etal13}.
 To explain what we mean by `local' let us first consider
the classical euclidean case. It is a well-known fact from the geometry of convex bodies that to each convex body $K\subset \RR^n$ one can associate a measure $S_{n-1}(K,\; \cdot\;)$ on the unit sphere $S^{n-1}$,
 called the area measure of $K$, such that 
\begin{equation}
 \label{eq:vol_areameasure}
\vol_n(K)= \frac{1}{n} \int_{ S^{n-1}} \left\langle u,x\right\rangle \; dS_{n-1}(K,u),
\end{equation}
where $x$ is a boundary point of $K$ with outer unit normal $u$ and $\left\langle u,x\right\rangle$ denotes the euclidean inner product. If the boundary of $K$ is $C^2$ and has all principal curvatures positive, 
then the measure $S_{n-1}(K,\; \cdot \;)$ is absolutely continuous with density 
equal to the reciprocal of the Gauss curvature at $x$. Schneider \cite{schneider75} proved that for all convex bodies $K,L\subset \RR^n$ and Borel subsets $U,V\subset S^{n-1}$ of the unit sphere 
\begin{equation}\label{eq:principal_local}\int_{SO(n)} S_{n-1}(K+ gL, U\cap g V) \; dg = \frac{1}{n\omega_n}  \sum_{i+j=n-1}  \binom{n-1}{i} S_i(K,U) S_j(L,V),\end{equation}
where $S_{i}(K,\;\cdot\;)$ denotes the $i$th area measure of $K$. Using relation \eqref{eq:vol_areameasure} and similar relations between $\mu_{i}(K)$ and $S_{i-1}(K,\; \cdot\;)$, 
the global principal kinematic formula \eqref{eq:principal} may be deduced from its local version \eqref{eq:principal_local}.

Let us describe now the results of the present article. It is far from obvious that one may replace $\RR^n$ by $\CC^n=\RR^{2n}$ and $SO(n)$ by $U(n)$ and still obtain a
bilinear splitting of the integral in \eqref{eq:principal_local}. In Section~\ref{sec:existence} we use a method developed by Fu \cite{fu90} based on fiber integration to prove that the
 special orthogonal group may be replaced by any connected,  closed subgroup $G\subset SO(n)$
acting transitively on the unit sphere. In particular, we may replace $SO(n)$ by $U(n)$.

Let us denote by $\Val^G$ the space of $G$-invariant,  translation-invariant, continuous valuations \cite{bernig_fu06} and by $\Area^G$ the space of $G$-invariant area measures \cite{wannerer13}. 
As indicated above, in the case of global kinematic formulas, 
the kinematic operator is a co-product which corresponds (after an identification of $\Val^{G}$ with its dual space via Poincar\'e duality \cite{alesker04}) to a product on $\Val^G$. This statement is known as the 
fundamental theorem of algebraic integral geometry (ftaig), see \cite{bernig_fu06}.
 If we want to use this approach to find explicit 
local kinematic formulas, we are faced with two problems: First, no product on $\Area^G$ is known and second, it is not clear how to identify $\Area^{G}$ with its dual space. 
However, as was shown by the author in \cite{wannerer13}, $\Area^G$ possesses the structure of a module over $\Val^G$. This property and the techniques of \cite{wannerer13} will play an important role in the present article.
Lacking a canonical identification of $\Area^{G}$ with its dual space, we will work directly with the product on $\Area^{G*}$ induced by the kinematic operator. This approach is new and will turn out to be
 surprisingly convenient.

We denote by $\RR[ s, t,  v]$ the polynomial algebra generated by the variables $ s$, $t$, and $ v$. 
The main result of this paper is an explicit description of the algebra $\Area^{U(n)*}$.

\begin{theoremA}
 The algebra $\Area^{U(n)*}$ is isomorphic to $\RR[s,  t, v] /I_n$, where the ideal $I_n$ is generated by
\begin{equation}\label{eq:algebra_relations}f_{n+1}(s, t), \qquad f_{n+2}( s, t),\qquad  p_n( s, t)-q_{n-1}( s, t) v,\qquad  p_n( s, t) v,\end{equation}
and
$$v^2.$$
The polynomials $f_k$, $p_k$, and $q_k$ are given by the Taylor series expansions
\begin{align*}
 \log(1+  t x +  s x^2) &= \sum_{k=0}^\infty f_k( s, t) x^k,\\
 \frac{1}{1+ t x +  s x^2} &= \sum_{k=0}^\infty p_k( s, t) x^k, \ \text{and } \\
 -\frac{1}{(1+  t x + s x^2)^2} &= \sum_{k=0}^\infty q_k( s,  t) x^k.
\end{align*}
\end{theoremA}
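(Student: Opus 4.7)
The plan is to build an explicit algebra homomorphism $\Psi\colon \RR[s,t,v]\to \Area^{U(n)*}$, check that the four relations generating $I_n$ lie in its kernel, and finish by comparing dimensions of $\RR[s,t,v]/I_n$ and $\Area^{U(n)*}$.

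The images of $s$ and $t$ I would pull back from the Bernig--Fu presentation $\Val^{U(n)}\cong \RR[s,t]/(f_{n+1},f_{n+2})$ via the dual of the globalization map. The module structure of $\Area^{U(n)}$ over $\Val^{U(n)}$ from \cite{wannerer13} should yield, via a local analogue of the fundamental theorem of algebraic integral geometry, an algebra embedding $\glob^{*}\colon \Val^{U(n)}\hookrightarrow \Area^{U(n)*}$: the key compatibility to prove is that the kinematic co-product on $\Area^{U(n)}$ and the module co-action intertwine, after dualizing, with the Bernig--Fu co-product on $\Val^{U(n)}$. This compatibility would yield the vanishing of $f_{n+1}(s,t)$ and $f_{n+2}(s,t)$ in $\Area^{U(n)*}$ for free. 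For the image of $v$ I would take (the dual of) a distinguished non-globalizable $U(n)$-invariant area measure from \cite{wannerer13}; its top-degree nature in the appropriate filtration should account for $v^2=0$.

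Verifying the two remaining relations $p_n(s,t)\,v=0$ and $p_n(s,t)=q_{n-1}(s,t)\,v$ is the heart of the proof and the main obstacle. My approach would be to combine Alesker's Kähler-type identities on $\Val^{U(n)}$ with the explicit action of $s$ and $t$ on a basis of $\Area^{U(n)}$ from \cite{wannerer13}, evaluated against test valuations to extract coefficients in $\Area^{U(n)*}$. The generating function form of $p_k$ and $q_k$ is a strong hint: these coefficients arise naturally from inverting the Bernig--Fu series $1+tx+sx^2$, and the identity $\partial_x(1+tx+sx^2)^{-1}=-(1+tx+sx^2)^{-2}(t+2sx)$ should be the combinatorial backbone for passing between the two relations.

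Surjectivity of $\Psi$ reduces to the statement that $\glob^{*}$ hits the subspace of $\Area^{U(n)*}$ dual to globalizable measures while the $v$-component covers the remaining directions, a fact that should again follow from the module structure. For the dimension count, I would compute the Hilbert series of $\RR[s,t,v]/I_n$ directly from the four relations and match it with the graded dimensions of $\Area^{U(n)}$ established in \cite{wannerer13}. The essential difficulty remains extracting enough explicit information about the dualized kinematic operator to verify the two mixed relations; without a canonical identification of $\Area^{U(n)}$ with its dual, one must argue purely in terms of pairings and the $\Val^{U(n)}$-module structure, which is precisely where the techniques of \cite{wannerer13} should be indispensable.
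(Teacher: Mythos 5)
Your overall strategy—build an algebra homomorphism $\RR[s,t,v]\to\Area^{U(n)*}$ via $\glob^*\circ\PD$ applied to the unitary valuation algebra, send $v$ to a distinguished non-globalizable direction, verify the relations, and conclude by a dimension count against the Hilbert series from \cite{wannerer13}---is indeed the paper's blueprint, and the observation that $f_{n+1},f_{n+2}$ vanish automatically because $\glob^*\circ\PD$ is an algebra homomorphism is correct (Corollary~\ref{cor:subalgebra}).

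However, you have the difficulty exactly inverted, and the inversion conceals a real gap. You treat $p_n v=0$ and $p_n=q_{n-1}v$ as ``the heart of the proof and the main obstacle,'' but these follow fairly directly from the explicit module structure of $\Area^{U(n)}$ over $\Val^{U(n)}$ already established in \cite{wannerer13}: the relations \eqref{eq:BequalsG}--\eqref{eq:Gequals0} together with Lemma~\ref{lem:kinprod_module} reduce them to known facts about the $B_{k,q}$--$\Gamma_{k,q}$ basis, with only a scalar normalization (Lemma~\ref{lem:tnBn0}) left to compute. Meanwhile your proposed argument for $v^2=0$---that ``its top-degree nature in the appropriate filtration should account for $v^2=0$''---does not work and cannot be made to work. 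The element $\bar v$ is (a multiple of) $B_{1,0}^*$, dual to a \emph{degree-one} area measure, so $\bar v^2$ sits in degree two, where the algebra is far from exhausted: a priori $\bar v^2$ could be a nonzero combination of $B_{2,0}^*$, $\Gamma_{2,0}^*$, $\Gamma_{2,1}^*$. There is no degree or filtration argument that forces it to vanish; one must actually compute a specific matrix coefficient of the kinematic operator and show it is zero. This is what the paper identifies as the genuinely new step, and it requires an apparatus absent from your sketch: a kinematic operator $a^{r_1,r_2}$ for $\Sym^r V$-valued tensor valuations, a Poincar\'e duality and ftaig for such valuations (Theorem~B), the moment maps $M^r\colon\Area^G\to\Val^{r,G}$ intertwining the two kinematic operators (Proposition~\ref{prop:tensorkin}), injectivity of $M^2$ for $G=U(n)$, and finally explicit Rumin-differential computations of the auxiliary tensor valuation $\Phi_1$ to evaluate the relevant pairings (Proposition~\ref{lem:sym4} and the appendix). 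Without this, or some replacement for it, the relation $v^2=0$ remains unproved and the presentation of $\Area^{U(n)*}$ is not established.
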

In Section~\ref{sec:explicit} we show how Theorem~A can be used to achieve the ultimate goal of obtaining explicit kinematic formulas for unitary area measures. 
We give examples and provide a simple algorithm which can be implemented in any computer algebra package.

We remark that the polynomials $f_k$ have appeared already in Fu's description of the unitary valuation algebra \cite{fu06} and so have the polynomials $p_k$ and $q_k$ in the description  of 
the module of unitary area measures by the author \cite{wannerer13}. The isomorphism in Theorem~A is given explicitly by
\begin{align*} t \quad & \longmapsto \quad \bar t:=\frac{2 \omega_{2n-2}}{\omega_{2n-1}}(B_{1,0}^* + \Gamma_{1,0}^*),\\
		s \quad & \longmapsto \quad \bar s:=\frac{n}{\pi} \Gamma^*_{2,1}, \text{ and}\\
		v \quad & \longmapsto \quad \bar v:=\frac{2\omega_{2n-2}}{\omega_{2n-1}} B_{1,0}^*.
\end{align*}
Here $B_{k,p}^*$, $\Gamma_{k,q}^*$ are elements of the basis dual to the $B_{k,q}$-$\Gamma_{k,q}$ basis for $\Area^{U(n)}$ used in \cite{wannerer13}.

Once the general set-up has been developed, the main difficulty in proving Theorem~A lies in proving $\bar v^2=0$; in fact, using the techniques of \cite{wannerer13}, the other relations \eqref{eq:algebra_relations} can be deduced from the explicit description of
the module $\Area^{U(n)}$, which has been obtained by the author in \cite{wannerer13}. 
The proof of $\bar v^2=0$, however, requires once again new ideas and tools. 

The main new idea is to relate the kinematic operator for area measures with a new kinematic operator for tensor valuations, i.e.\ valuations in the sense of \eqref{eq:def_val}, but with values in 
the symmetric algebra on $\RR^n$. Although tensor valuations compatible with the special orthogonal group have been studied
 by several authors \cites{alesker99a,alesker_etal11,hug_etal07,mcmullen97} and a family of kinematic formulas  has been established \cite{hug_etal08}, the integral geometry of tensor valuations is for the most part
still unexplored. In recent years, tensor valuations have received increased attention in applied sciences \cites{beisbart_etal02, beisbart_etal06, schroeder_etal10}.

Let $\Val^{r,G}$ denote the space of translation-invariant, continuous, $G$-covariant, rank $r$  tensor valuations. Since already in the case $G=SO(n)$ 
a full investigation of these valuations and their integral geometry would be out of the scope of this article,
we confine ourselves to introduce only those concepts which are necessary to prove Theorem A. In Section~\ref{sec:tensor}, we introduce for nonnegative integers $r_1,r_2\geq 0$ a kinematic operator
$a^{r_1,r_2}\colon \Val^{r_1+r_2,G}\rightarrow \Val^{r_1,G}\otimes \Val^{r_2,G}$ by
$$a^{r_1,r_2}(\Phi)(K,L) = \int_G (\operatorname{id}^{\otimes r_1} \otimes g^{\otimes r_2}) \Phi(K+ g^{-1}L)\; dg$$
whenever $K,L\subset \RR^n$ are convex bodies and define for $r\geq 0$ the $r$th order moment map $M^r\colon \Area^G\rightarrow \Val^{r,G}$ by
$$M^r(\Phi)(K) = \int_{S^{n-1}} u^{ r} \; d\Phi(K,u).$$ 
We then prove that
\begin{equation}\label{eq:intro_momentkin}
 a^{r_1,r_2} \circ M^{r_1+r_2}(\Phi) = (M^{r_1}\otimes M^{r_2}) \circ A (\Phi),\qquad \Phi\in \Val^{r_1+r_2,G}
\end{equation}
where $A\colon \Area^G\rightarrow \Area^G\otimes \Area^G$ denotes the kinematic operator for area measures. Moreover, we extend the Bernig-Fu convolution and Alesker's Poincar\'e duality for scalar valuations to 
tensor valuations. In particular, we show that a version of the fundamental theorem of algebraic integral geometry (ftaig) holds for tensor valuations:

\begin{theoremB}
 Let $r_1,r_2$ be nonnegative integers and denote by $c_G\colon \Val^{r_1,G}\otimes \Val^{r_2,G}\rightarrow \Val^{r_1+r_2, G}$ and $\widehat{\PD}\colon \Val^{r,G}\rightarrow (\Val^{r,G})^*$ the convolution product and
 Poincar\'e duality. Then
$$a^{r_1,r_2} = (\widehat{\PD}^{-1}\otimes \widehat{\PD}^{-1}) \circ c^*_G \circ \widehat{\PD}.$$
\end{theoremB}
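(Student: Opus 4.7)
The plan is to deduce Theorem~B from the scalar version of the fundamental theorem of algebraic integral geometry, established by Bernig and Fu in \cite{bernig_fu06}, by reducing the tensor identity to its scalar counterpart through contraction with test tensors in the symmetric algebra. From the constructions in Section~\ref{sec:tensor}, the tensor Poincar\'e duality $\widehat{\PD}$ and convolution $c_G$ extend their scalar analogues coefficient by coefficient, after realizing $\Val^{r,G}$ as the $G$-covariant subspace of $\Val \otimes \Sym^r \RR^n$; crucially, the definition of $c_G$ couples the scalar Bernig--Fu convolution on the valuation factors with the symmetric-algebra multiplication $\Sym^{r_1}\RR^n \otimes \Sym^{r_2}\RR^n \to \Sym^{r_1+r_2}\RR^n$ on the coefficients.

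The claimed equality is equivalent to $(\widehat{\PD} \otimes \widehat{\PD}) \circ a^{r_1,r_2} = c_G^* \circ \widehat{\PD}$, so it suffices to verify that for all $\Phi \in \Val^{r_1+r_2,G}$ and $\Psi_i \in \Val^{r_i,G}$ the two scalar pairings $\langle (\widehat{\PD} \otimes \widehat{\PD})\, a^{r_1,r_2}(\Phi),\Psi_1 \otimes \Psi_2 \rangle$ and $\widehat{\PD}(\Phi)\bigl(c_G(\Psi_1 \otimes \Psi_2)\bigr)$ coincide. My plan is to unfold
$$a^{r_1,r_2}(\Phi)(K,L) = \int_G (\id^{\otimes r_1} \otimes g^{\otimes r_2}) \Phi(K + g^{-1}L)\, dg,$$
contract pointwise in $g$ against dual tensors $\omega_i \in (\Sym^{r_i}\RR^n)^*$ corresponding to $\Psi_i$ via the coefficient decomposition, and then perform the change of variables $g \mapsto g^{-1}$ in the Haar integral. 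The $G$-covariance of $\Phi$ allows the twist $g^{\otimes r_2}$ to be absorbed, so that after summation over a basis of the symmetric powers the integrand becomes a $G$-invariant scalar valuation extracted from $\Phi$ by contraction against the symmetric product $\omega_1 \cdot \omega_2 \in (\Sym^{r_1+r_2}\RR^n)^*$. At that point, the scalar ftaig identifies the group integral with the scalar convolution on the factors, and reassembling in the tensor bases recovers $c_G$.

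The main obstacle is to verify that the twist $g^{\otimes r_2}$ in the definition of $a^{r_1,r_2}$ is matched precisely by the $G$-equivariance of the symmetric-algebra multiplication that enters $c_G$. This compatibility is ultimately a consequence of the $G$-covariance identity $\Phi(hK)=h^{\otimes(r_1+r_2)}\Phi(K)$ combined with the bi-invariance of Haar measure on the compact group $G$, but carrying it out cleanly requires careful bookkeeping in the dual bases of $\Sym^{r_i}\RR^n$. Once this compatibility is established, no new integral-geometric input is needed and the proof reduces term by term to the scalar ftaig.
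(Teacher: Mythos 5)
Your proposal follows the same overall strategy as the paper: dualize, reduce the tensor identity to scalar statements through the Poincar\'e pairing, and use $G$-covariance to absorb the twist $g^{\otimes r_2}$. What you single out as the ``main obstacle'' is precisely what the paper isolates as Lemma~\ref{lem:kinmatcontraction}, namely the commutation identity
$$\bigl(\contr(\Psi_1,\;\cdot\;)\otimes\contr(\Psi_2,\;\cdot\;)\bigr)\circ a^{r_1,r_2}=a\circ\contr(\Psi_1*\Psi_2,\;\cdot\;),$$
which is proved by writing $a^{r_1,r_2}(\Phi)(\;\cdot\;,L)=\int_G(\id^{\otimes r_1}\otimes g^{\otimes r_2})\,\mu_{g^{-1}L}*\Phi\,dg$, contracting against $\Psi_1$, using the $G$-covariance of $\contr(\Psi_1,\Phi)$ to rewrite $g^{\otimes r_2}\contr(\Psi_1,\Phi)(K+g^{-1}L)=\contr(\Psi_1,\Phi)(gK+L)$, invoking invariance of Haar measure under inversion, and then contracting against $\Psi_2$ via \eqref{eq:valcontraconv}.

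Two caveats are worth flagging. First, in the key step you must contract against the tensor \emph{valuations} $\Psi_i$, not merely fixed dual tensors $\omega_i\in(\Sym^{r_i}\RR^n)^*$: the contraction $\contr(\Psi_1,\;\cdot\;)$ both contracts the symmetric-tensor indices and convolves the scalar coefficient valuations of $\Psi_1$ with those of $\Phi$, and it is precisely this convolution that allows the $g$-twist to be absorbed. Contracting against a fixed tensor alone would leave a residual $g$-dependence in the contraction and would not produce a $G$-invariant scalar valuation inside the Haar integral. Second, the paper does not need the full scalar ftaig at the end: once Lemma~\ref{lem:kinmatcontraction} reduces the left-hand side to $\bigl\langle a(\contr(\Psi_1*\Psi_2,\Phi)),\,\chi^*\otimes\chi^*\bigr\rangle$, the conclusion follows from the elementary identity $\langle a(\phi),\chi^*\otimes\chi^*\rangle=\langle\phi,\chi^*\rangle$ for $\phi\in\Val^G$, which is immediate from the definitions of $a$ and $\chi^*$. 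Your plan of invoking the scalar ftaig of Bernig--Fu is not wrong, but it is more machinery than the reduction actually requires.
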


After these general results, we show that the second order moment map $M^2$ is injective in the case $G=U(n)$. Hence \eqref{eq:intro_momentkin} with $r_1=r_2=2$ can be used to obtain --- at least in principle --- the
 kinematic formula for area measures from the kinematic formulas for tensor valuations. In a final step, we use Theorem~B to prove $\bar v^2=0$.

\subsection*{Acknowledgements} I would like to thank the University of Georgia for their hospitality at an early stage of this work.
  I am grateful to Andreas Bernig and Joe Fu for illuminating discussions and their encouragement.

\section{Existence of kinematic formulas for area measures} \label{sec:existence}

The goal of this section is to  prove that if we replace the special orthogonal group by any connected, closed subgroup $G\subset SO(n)$ acting transitively on the unit sphere, we still
obtain a bilinear splitting of the integral in \eqref{eq:principal_local}. To make this statement precise, we need to introduce some notation.

In the following $G\subset SO(n)$ denotes a connected, closed subgroup acting transitively on the unit sphere. We will always assume that $n\geq 2$. We denote by $S\RR^n $ the sphere bundle of $\RR^n$ 
and write $\pi_1\colon S\RR^n\rightarrow \RR^n$ and $\pi_2\colon S\RR^n\rightarrow S^{n-1}$ for the natural projections. The group $\overline{G}=G\ltimes \RR^n$ generated by 
$G$ and translations of $\RR^n$ acts on $\RR^n$ by isometries and hence there is a canonical action of $\overline{G}$ on the sphere bundle. The space of differential forms on $S\RR^n$ invariant under this action is denoted by $\Omega(S\RR^n)^G$. Since $G$ acts transitively
on the unit sphere, $\Omega(S\RR^n)^G$ is finite-dimensional. If $\phi:S^{n-1}\rightarrow \RR$ is a function on the unit sphere, we denote by $(g \phi)(u)=\phi(g^{-1}u)$ left translation by $g\in G$.

Let $K\subset \RR^n$ be a convex body, i.e.\ a nonempty, compact, convex subset. The normal cycle $N(K)$ of $K$ consists of those pairs $(x,u)\in S\RR^n$ such that $x$ is a 
boundary point of $K$ and $u$ is an outer unit normal of $K$ at $x$. Since $N(K)$ 
is a countably $(n-1)$-rectifiable set and carries a natural orientation (see \cites{fu90, fu94,alesker_fu08}), we can integrate $(n-1)$-forms over it, 
$$N(K)(\omega):=\int_{N(K)} \omega,\qquad \omega\in \Omega^{n-1}(S\RR^n).$$
For example, it is not difficult to see that there exists a form $\kappa_{n-1}\in \Omega^{n-1}(S\RR^n)^{SO(n)}$ such that 
$$ \int_{S^{n-1}} \phi(u)\; dS_{n-1}(K,u) = N(K)(\pi^*_2 \phi \cdot  \kappa_{n-1})$$
for every bounded Baire function $\phi$.

\begin{theorem}\label{thm:existence}
Let  $\beta_1,\ldots, \beta_m$ be a basis of $\Omega^{n-1}(S\RR^n)^G$. 
If $\omega\in \Omega^{n-1}(S\RR^n)^G$, then there exist constants $c_{ij}$ such that
$$\int_G N(K+gL)( \pi_2^*(\phi \; g\psi) \cdot  \omega)  \; dg = \sum_{i,j}^{m} c_{ij} N(K)( \pi_2^*\phi \cdot \beta_i)\;   N(L)( \pi_2^*\psi \cdot \beta_j)$$
for all bounded Baire functions $\phi, \psi$ on $S^{n-1}$  and convex bodies  $K, L\subset \RR^n$.

\end{theorem}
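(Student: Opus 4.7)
The plan is to follow Fu's fiber integration strategy, extending the scalar-valuation case to the local setting of area measures, by producing a single form on $S\RR^n\times S\RR^n$ that encodes the $G$-averaged bilinear functional.

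\textbf{Fiber-product description of the normal cycle of a Minkowski sum.} For each $g\in G$, introduce the twisted diagonal
\[
 Z_g=\{((x,u),(y,v))\in S\RR^n\times S\RR^n : u=gv\},
\]
a submanifold of codimension $n-1$ (full rank because $G$ acts transitively on $S^{n-1}$), and the Minkowski addition map $a_g\colon Z_g\to S\RR^n$, $a_g((x,u),(y,v))=(x+gy,u)$. The standard current-theoretic description of the normal cycle of a sum (see \cite{fu94,bernig_fu06}) gives
\[
 N(K+gL)=(a_g)_*\bigl((N(K)\times N(L))\sqcap [Z_g]\bigr),
\]
so for any smooth form $\eta_g$ on $S\RR^n$,
\[
 N(K+gL)(\eta_g)=(N(K)\times N(L))\bigl(a_g^*\eta_g\wedge [Z_g]\bigr).
\]
Applying Fubini/fiber integration with respect to the Haar measure on $G$,
\[
 \int_G N(K+gL)(\eta_g)\,dg=(N(K)\times N(L))(\alpha),\qquad \alpha:=\int_G a_g^*\eta_g\wedge [Z_g]\,dg,
\]
with $\alpha\in\Omega^{2n-2}(S\RR^n\times S\RR^n)$ (justified since the slice map has everywhere maximal rank by transitivity).

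\textbf{Separation of the two test functions.} Specializing $\eta_g=\pi_2^*(\phi\cdot g\psi)\cdot\omega$, observe that on $Z_g$ one has $(g\psi)(u)=\psi(g^{-1}u)=\psi(v)$. Hence $a_g^*\eta_g=\phi(u)\psi(v)\,a_g^*\omega$ on $Z_g$, and $\alpha$ factors as $\phi\,\psi\,\alpha_\omega$, where $\phi,\psi$ are interpreted as functions on $S\RR^n\times S\RR^n$ via the sphere projections of the two factors, and $\alpha_\omega:=\int_G a_g^*\omega\wedge[Z_g]\,dg$ is independent of $\phi,\psi$.

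\textbf{Invariance of $\alpha_\omega$ under $\overline G\times\overline G$.} Translation invariance of each factor follows from the translation invariance of $\omega$ together with the observation that translations of $K$ and $L$ only translate the fibers of $a_g$. For the rotation part, let $T_h$ denote the $h$-action on the first factor. A direct computation shows $a_g\circ T_h=h\circ a_{h^{-1}g}$ on $T_h^{-1}Z_g=Z_{h^{-1}g}$, whence
\[
 T_h^*\bigl(a_g^*\omega\wedge[Z_g]\bigr)=a_{h^{-1}g}^*(h^*\omega)\wedge [Z_{h^{-1}g}]=a_{h^{-1}g}^*\omega\wedge [Z_{h^{-1}g}]
\]
by $G$-invariance of $\omega$. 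Substituting $g'=h^{-1}g$ and using bi-invariance of Haar measure on the compact group $G$ yields $T_h^*\alpha_\omega=\alpha_\omega$; the analogous computation works for the second factor.

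\textbf{Finite-dimensional conclusion.} Only the bidegree $(n-1,n-1)$ part of $\alpha_\omega$ contributes to the pairing with $N(K)\times N(L)$. That part lies in the space of $\overline G\times\overline G$-invariant forms of this bidegree, which is isomorphic to $\Omega^{n-1}(S\RR^n)^G\otimes\Omega^{n-1}(S\RR^n)^G$, a finite-dimensional space by transitivity of $G$ on $S^{n-1}$. Expanding its image in the basis $\beta_1,\dots,\beta_m$ gives constants $c_{ij}$ with $\alpha_\omega^{(n-1,n-1)}=\sum c_{ij}\beta_i\otimes\beta_j$, and substituting back produces the asserted bilinear splitting. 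The main obstacle is the rigorous handling of slicing and fiber integration at the level of rectifiable currents; this is precisely the technical content of \cite{fu90,bernig_fu06}, and the transitivity of $G$ on $S^{n-1}$ is what guarantees that all relevant slice maps have maximal rank and that the Haar-measure interchange is valid.
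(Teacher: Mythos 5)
Your argument is essentially the paper's argument in dual language: the paper builds an explicit fiber bundle $E$ over $S\RR^n\times S\RR^n$ with fiber $G_o$ (the isotropy group of a point on the sphere) together with a second projection to $G\times S\RR^n$, pushes $q_1^*dg\wedge\omega'$ forward along $p$, and invokes the slicing formula; your ``twisted diagonal'' $Z_g$ and the map $a_g$ are exactly the fibers of the other projection of the same double fibration, so $\alpha_\omega$ is the same fiber integral. The invariance computation and the Künneth-type finite-dimensionality argument are identical in substance. That said, there are two places where your sketch glosses over steps the paper proves carefully and which are genuinely needed.

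First, for $p_*$ (equivalently, for your $\alpha_\omega=\int_G a_g^*\omega\wedge[Z_g]\,dg$) to be a well-defined smooth form one needs the incidence bundle to be \emph{orientable}, not merely a fiber bundle with transitive $G$-action. The paper proves orientability by producing explicit local trivializations whose transition maps are left/right translations by $G_o$ — these are orientation preserving because $G_o$ is compact and, crucially, \emph{connected} (which uses $n\geq 2$). You never address this, and without it the fiber integral is not canonically defined. Second, the identity $N(K+gL)=(a_g)_*\bigl((N(K)\times N(L))\sqcap[Z_g]\bigr)$ and the associated slicing hold only for a.e.\ $g$ and only when $K$, $L$ are smooth and strictly convex (so the slice map is a diffeomorphism a.e.\ by Sard). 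The theorem, however, is asserted for all convex $K$, $L$ and all bounded Baire $\phi$, $\psi$; the passage from the smooth/strictly-convex case to the general case requires the uniform mass bound on normal cycles, weak continuity of $K\mapsto N(K)$, and a two-step dominated-convergence argument (first in $K$, $L$, then in $\phi$, $\psi$). Your sketch stops at the statement ``substituting back produces the asserted bilinear splitting,'' which is really only justified in the special smooth case; the approximation argument is not a formality and needs to be carried out.
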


In \cite{fu90} Fu proved the existence of intersectional kinematic formulas in a very general setting. In this 
section we adapt Fu's method to prove Theorem~\ref{thm:existence}.

\subsection{Fiber bundles and fiber integration} Before we prove the existence of general additive kinematic formulas, we first need to recall the definition and main properties of fiber integration, cf.\ 
Chapter~VII of \cite{greub_etal72} and \cite{fu90}. 

Let $\calB=(E,\pi, M, F)$ be a smooth fiber bundle with total space $E$,
base space $M$, projection $\pi\colon E\rightarrow M$ and fiber $F$. Recall that the bundle $\calB$ is orientable if and only if the fiber $F$ is orientable and there exists an open cover 
$\{U_i\}$ of $M$ and local trivializations $\phi_i \colon \pi^{-1}(U_i) \rightarrow U_i \times F$ such that the transition maps 
$c_{ij}(x)\colon F\rightarrow F$, $\phi_i \circ \phi_j^{-1}(x,y)=(x,c_{ij}(x)y)$, are orientation preserving diffeomorphisms. A bundle $\calB$ is  oriented by an orientation of $F$ together with an open cover of the 
base corresponding to a collection of local trivializations with orientation preserving transition maps.
A differential form $\omega$ on $E$ is said to have fiber-compact support if for every compact $K\subset M$ the intersection $\pi^{-1}(K)\cap \supp \omega$ is compact. The space of fiber-compact differential forms
is denoted by $\Omega_F(E)$.

If $(E,\pi,M,F)$ is an oriented bundle with $\dim M= n$ and $\dim F=r$, then there exists a canonical linear map 
$\pi_*\colon  \Omega_F(E) \rightarrow \Omega(M)$ called fiber integration. It is defined as follows: 

Let $\omega$ be a fiber-compact form on $E$ and suppose $\phi:\pi^{-1}(U)\rightarrow U\times F$ is a 
local trivialization compatible with the orientation of the bundle. Shrinking $U$ if necessary, we may assume that $x_1,\ldots,x_n$ are coordinates for $U$. Then $(\phi^{-1})^*\omega$ can be written uniquely
as 
$$(\phi^{-1})^*\omega(x,y) = \sum_{k=0}^n\sum_{i_1<\ldots < i_k} dx_{i_1}\wedge \cdots \wedge dx_{i_k} \wedge \omega^{(i_1,\ldots,i_k)}(x,y),\qquad (x,y)\in U\times F,$$
where $\partial x_j\lrcorner \omega^{(i_1,\ldots,i_k)}(x,y)=0$ for every coordinate vector field $\partial x_j$. Then $y\mapsto \omega^{(i_1,\ldots,i_q)}(x,y)$ is an element of $\Omega_c(F)$
and the integral of $\omega$ over the fibers is defined by
$$\pi_*\omega(x):=\sum_{k=0}^n\sum_{i_1<\ldots < i_k} dx_{i_1}\wedge \cdots \wedge dx_{i_k} \int_F \omega^{(i_1,\ldots,i_k)}_x$$
with the convention $\int_F \xi=0$ for forms $\xi$ of degree less than $r$.

If $M$ is orientable and $E$ is equipped with the local product orientation (see \cite{greub_etal72}*{p. 288}), then
$$\int_M \alpha \wedge \pi_* \omega= \int_E \pi^* \alpha \wedge \omega$$
for every $\omega\in \Omega_F^k(E)$ and $\alpha\in \Omega_c^{n+r-k}(M)$. In particular, if $N\subset M$  is an oriented, compact submanifold with $\dim N= q$ and $\pi^{-1}(N)$ is equipped with the local product orientation, then the following version of Fubini's theorem holds: 
If $\omega\in \Omega_F^{q+r}(E)$, then
\begin{equation}\label{eq:fiber_fubini}
 \int_N  \pi_* \omega  =\int_{\pi^{-1}(N)}  \omega.
\end{equation}

Let $(E,\pi, M, F)$ and $(E', \pi',M',F)$ be oriented bundles with the same fiber $F$ and let $\bar f\colon E'\rightarrow E$ be a bundle map covering the smooth map $f\colon M'\rightarrow M$.
If there exists an open cover $\{U\}$ of $M$ together with local trivializations $\phi\colon \pi^{-1}(U)\rightarrow U\times F$ and $\phi'\colon \pi^{-1}(f^{-1}(U))\rightarrow f^{-1}(U)\times F$ compatible with the
orientations of the bundles such that
$$\phi^{-1}\circ \bar f\circ \phi' = f\times \operatorname{id}_F,$$
then
\begin{equation}
 \label{eq:bundlemap}
f^*\circ \pi_* = \pi_*' \circ \bar f^*,
\end{equation}
see \cite{fu90}.

\subsection{A slicing formula}

It will be helpful to restate the general slicing formula for currents \cite{federer69}*{4.3.2} in the special setting in which we are going to apply it. 

Suppose $f\colon X\to Y$ is a surjective, smooth map between compact, smooth manifolds with $m=\dim X$ and $n=\dim Y$. Suppose that $X$ is oriented by a
 smooth $m$-vector field $\xi$ and that $Y$ is oriented by a smooth $n$-form $dy$. By Sard's theorem, $f^{-1}(y)$ is a smooth submanifold for almost every $y\in Y$ and is 
oriented by the smooth $(m-n)$-vector field
\begin{equation}
 \label{eq:slice_orientation}
\zeta = \xi\llcorner f^*dy
\end{equation}
on $X$. If we define the measure $\mu$ by $\lcur Y \rcur \llcorner dy$, then by \cite{federer69}*{4.3.2, 4.3.8} 
\begin{equation}
 \label{eq:slicing}
\int_Y \Phi(y) \left( \int_{f^{-1}(y) } \omega \right)  \; d\mu(y) = \int_X f^*(\Phi \wedge dy)\wedge \omega
\end{equation}
for every bounded Baire function $\Phi\colon Y\to \RR$ and every $(m-n)$-form $\omega$ on $X$.

\subsection{Proof of Theorem~\ref{thm:existence}}

Let us fix a point $o\in S^{n-1}$ and let $G_o\subset G$ be the isotropy group of $o$. Note that $G_o$ is always connected. Indeed, if $n=2$, then $G_0=\{e\}$ and if $n\geq 3$, then $S^{n-1}$ is simply connected.

We put
$$E= \{ (g,\xi,\eta, \zeta)\in G\times (S\RR^n )^3: \pi_1(\zeta)= \pi_1(\xi) + g\pi_1(\eta), \ \pi_2(\zeta)= \pi_2(\xi)= g\pi_2(\eta)\}$$
and define a projection $p\colon  E\rightarrow S\RR^n \times S\RR^n$ by $p(g,\xi,\eta, \zeta)=(\xi, \eta)$. We claim that $p\colon E\rightarrow S\RR^n \times S\RR^n$ is an orientable fiber bundle with fiber $G_o$.
In fact, suppose $U,V\subset S\RR^n$ are open sets and $\phi\colon U \rightarrow G$, $\psi\colon V \rightarrow G$ are smooth maps satisfying $\phi(\xi)o= \pi_2(\xi)$, $\xi\in U$, and $\psi(\eta) o = \pi_2(\eta)$,  $\eta\in V$. Then
\begin{align*} \Phi\colon &p^{-1}(U\times V) \longrightarrow (U\times V) \times G_o \\
					      &(\xi,\eta, \zeta, g)\mapsto (\xi,\eta,\phi(\xi)^{-1} g \psi(\eta))
\end{align*}
is a local trivialization for $p$. 
If $\Phi'\colon p^{-1}(U'\times V') \longrightarrow (U'\times V') \times G_o$ is another local trivialization constructed in the same way, then the transition map $c$ is given by
\begin{align*}
 c \colon & (U\times V) \cap (U'\times V') \rightarrow \operatorname{Diff}(G_o)\\
			& (\xi, \eta)\mapsto \left[ g \mapsto (\phi')(\xi)^{-1} \phi(\xi) \cdot g\cdot  \psi(\eta)^{-1} \psi'(\eta)\right].
\end{align*}
Note that both $(\phi')(\xi)^{-1} \phi(\xi)$ and $ \psi(\eta)^{-1} \psi'(\eta)$ are elements of $G_o$. Since  $G_o$ is compact and connected, left and right translations are orientation preserving.
Hence the transition map consists of orientation preserving diffeomorphisms. We conclude that our bundle $(E, p, S\RR^n \times S\RR^n, G_o)$ is orientable.

We define another projection $q\colon E\rightarrow G\times S\RR^n$ by $p(g,\xi,\eta, \zeta)=(g,\zeta)$. Observe that $q\colon E\rightarrow G\times S\RR^n$ is isomorphic 
to the trivial bundle $G\times S\RR^n\times \RR^n$.

For every element $g\in \overline G$ we define $g_0\in G$ by $g_0(x):=g(x)-g(0)$. We let the group $\overline{G}\times \overline{G}$ act on the total 
space $E$ by 
$$(h,k)\cdot(g,\xi,  \eta,  \zeta):= (h_0gk_0^{-1}, h \xi, k \eta, h \zeta),\qquad (h,k)\in \overline{G}\times \overline{G},$$
and on the base spaces $S\RR^n\times S\RR^n$  and $G\times S\RR^n$ 
by 
$$(h,k)\cdot (\xi,\eta):= (h\xi,k\eta)\qquad \text{and} \qquad (h,k)\cdot (g,\zeta):=(h_0gk_0^{-1}, h \zeta).$$ 
Clearly, the projections $p$ and $q$ commute with these group actions.

\begin{lemma}\label{lem:invariance}
Suppose $\omega\in\Omega(E)$ is  $\overline{G}\times \overline{G}$-invariant. Then the fiber integral  $p_*\omega$ is a $\overline{G}\times \overline{G}$-invariant form on $S\RR^n\times S \RR^n$.
\end{lemma}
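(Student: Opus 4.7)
The plan is to deduce the invariance of $p_*\omega$ from the naturality property \eqref{eq:bundlemap} of fiber integration. For each $(h,k) \in \overline{G} \times \overline{G}$ the action on $E$ is a bundle automorphism $\bar f\colon E \to E$ covering the action $f\colon S\RR^n \times S\RR^n \to S\RR^n \times S\RR^n$, because $p$ commutes with the two actions as already observed. Thus, once we know that $\bar f$ is compatible with suitable local trivializations in the sense of \eqref{eq:bundlemap}, we get $f^* \circ p_* = p_* \circ \bar f^*$, and applying this to $\omega$ together with $\bar f^*\omega = \omega$ (from $\overline{G}\times\overline{G}$-invariance) yields $f^* p_*\omega = p_*\omega$, which is exactly what we want.

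The main task is therefore to produce, around any pair of base points $(\xi_0,\eta_0)$ and $(h\xi_0, k\eta_0)$, local trivializations of the form introduced before the lemma so that $\bar f$ reads $f \times \operatorname{id}_{G_o}$. I would start with sections $\phi\colon U \to G$, $\psi\colon V \to G$ with $\phi(\xi)o = \pi_2(\xi)$ and $\psi(\eta)o = \pi_2(\eta)$, and then, on the translated neighborhoods $hU$ and $kV$, use the manufactured sections
\begin{equation*}
\phi'(\xi') := h_0\, \phi(h^{-1}\xi'), \qquad \psi'(\eta') := k_0\, \psi(k^{-1}\eta').
\end{equation*}
These satisfy $\phi'(\xi')o = h_0\pi_2(h^{-1}\xi') = \pi_2(\xi')$ and analogously for $\psi'$, so they define local trivializations $\Phi$, $\Phi'$ of $p$ as in the proof that the bundle is orientable. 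A direct computation then shows
\begin{equation*}
\Phi' \circ \bar f \circ \Phi^{-1}(\xi,\eta, a) = \bigl(h\xi,\, k\eta,\, \phi'(h\xi)^{-1} h_0 \phi(\xi) \, a \, \psi(\eta)^{-1} k_0^{-1} \psi'(k\eta)\bigr) = (h\xi,\, k\eta,\, a),
\end{equation*}
i.e.\ $\bar f$ acts as $f \times \operatorname{id}_{G_o}$ in these charts. Since the $G_o$-fiber is mapped by the identity, the bundle map hypothesis of \eqref{eq:bundlemap} is verified.

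What remains is bookkeeping on orientations: one must check that $\Phi$ and $\Phi'$ are both in the chosen oriented atlas of $p$. But this follows from the observation already made in the paragraph preceding the lemma, namely that any two trivializations of the form described have transition map which is the composition of a left and a right translation on the connected, compact group $G_o$, hence orientation preserving. I do not expect any subtle obstacle here; the main conceptual step is recognising that the moving trivializations $\phi'$, $\psi'$ are engineered precisely so that the twist in the fiber coordinate $\phi(\xi)^{-1} g \psi(\eta)$ is invariant under the $\overline{G}\times\overline{G}$-action on $(g,\xi,\eta)$. With that identity in hand, \eqref{eq:bundlemap} immediately gives the $\overline{G}\times\overline{G}$-invariance of $p_*\omega$.
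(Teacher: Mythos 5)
Your proof is correct and uses essentially the same strategy as the paper: constructing the trivialization $\Phi'$ around $(h\xi_0,k\eta_0)$ so that $\bar f$ becomes $f\times\operatorname{id}_{G_o}$ in local coordinates, and then applying the naturality formula \eqref{eq:bundlemap}. (Your explicit moving sections $\phi'(\xi')=h_0\phi(h^{-1}\xi')$, $\psi'(\eta')=k_0\psi(k^{-1}\eta')$ are in fact exactly what makes the paper's formula for $\Phi'$ well defined on $p^{-1}(f(W))$, and you state the orientation-compatibility argument a bit more explicitly than the paper, but the idea is identical.)
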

\begin{proof}
 Fix $(h,k)\in \overline{G}\times \overline{G}$ and $(\xi_0,\eta_0)\in S\RR^n\times S\RR^n$. Choose open neighborhoods $U$, $V$ of $\xi_0$ and $\eta_0$ such that there exist smooth maps
 $\phi\colon U \rightarrow G$ and  $\psi\colon V \rightarrow G$ with
$\phi(\xi)o= \pi_2(\xi)$, and $\phi(\eta) o = \pi_2(\eta)$. Put $W=U\times V$ and define a local trivialization 
\begin{align*}
  \Phi\colon &p^{-1}(W) \longrightarrow W \times G_o \\
					      &(\xi,\eta, \zeta, g)\mapsto (\xi,\eta,\phi(\xi)^{-1} g \psi(\eta)).
\end{align*}
Define also maps $f\colon S\RR^n\times S\RR^n\rightarrow S\RR^n\times S\RR^n$ and $\bar f\colon E\rightarrow E$ by
$$f(\xi,\eta) = (h,k)\cdot (\xi, \eta) \quad \text{and} \quad \bar f(g,\xi,\eta, \zeta) =(h,k)\cdot  (g,\xi, \eta, \zeta)$$
Observe that
\begin{align*}
  \Phi'\colon &p^{-1}(f(W)) \longrightarrow f(W) \times G_o \\
					      &(\xi,\eta, \zeta, g)\mapsto (\xi,\eta,\phi(\xi)^{-1}h_0^{-1} g k_0\psi(\eta)).
\end{align*}
is also a local trivialization and that
$$\Phi'^{-1} \circ \bar f\circ \Phi = f\times \operatorname{id}_{G_o}.$$
Thus we can apply \eqref{eq:bundlemap} and obtain $f^*\circ p_* = p_* \circ \bar f^*$.
This shows that $p_*\omega$ is $\overline{G}\times \overline{G}$-invariant.
\end{proof}

Finally, we need to recall two facts concerning the normal cycles of convex bodies (see, e.g., the proof of Lemma~2.1.3 in \cite{alesker_fu08}).

\begin{lemma}\label{lem_massbound}
 Let $\{K_\alpha \}$ be a collection of convex bodies. If there exists some ball containing all $K_\alpha$, then there exists a constant $C$ such that $$\sup_\alpha \mathbf{M}(N(K_\alpha))\leq C,$$ where $\mathbf{M}(T)$
denotes the mass of a current $T$, see \cite{federer69}*{p. 358}. 
\end{lemma}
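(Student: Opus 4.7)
The plan is to express $\mathbf{M}(N(K))$ for smooth strictly convex bodies as an integral over $S^{n-1}$ via the inverse Gauss map, bound the integrand pointwise by the elementary symmetric polynomials of the principal radii of curvature, and then invoke monotonicity of intrinsic volumes together with a standard approximation argument to handle arbitrary convex bodies.

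First, I would assume $K$ has $C^2$ boundary with everywhere positive principal curvatures. Then the inverse Gauss map $\nu_K^{-1}\colon S^{n-1}\to\partial K$ is a diffeomorphism, the support of $N(K)$ is the image of the embedding $u\mapsto (\nu_K^{-1}(u), u)$ in $S\RR^n$, and a short computation with the product metric on $\RR^n\times S^{n-1}$ shows that the Jacobian of this parametrization at $u$ equals $\prod_{i=1}^{n-1}\sqrt{1+r_i(u)^2}$, where $r_1(u),\ldots,r_{n-1}(u)$ are the principal radii of curvature of $\partial K$ at $\nu_K^{-1}(u)$. Estimating $\sqrt{1+r^2}\leq 1+r$ turns this product into a sum of elementary symmetric polynomials in the $r_i$. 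Their integrals over $S^{n-1}$ are, up to universal constants, the intrinsic volumes $\mu_j(K)$ of $K$. Since intrinsic volumes are monotone on convex bodies, if $K$ is contained in a ball of radius $R$ then each $\mu_j(K)$ is bounded by $\mu_j$ of that ball, yielding a bound on $\mathbf{M}(N(K))$ depending only on $R$.

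To pass to general convex bodies I would smooth by Minkowski addition with a small ball, setting $K_\varepsilon=K+\varepsilon B$ where $B$ is the unit ball. These bodies are $C^\infty$ and strictly convex, lie in a ball of radius $R+\varepsilon$ whenever $K\subset R\cdot B$, and their normal cycles converge to $N(K)$ in the flat metric on $S\RR^n$ as $\varepsilon\to 0$ (this is part of the construction of the normal cycle of a convex body, cf.\ \cite{fu94}). Since mass is lower semicontinuous under flat convergence of integral currents, one obtains $\mathbf{M}(N(K))\leq \liminf_{\varepsilon\to 0}\mathbf{M}(N(K_\varepsilon))$, and the right side is bounded by a constant depending only on $R+1$ by the smooth estimate applied uniformly for $\varepsilon<1$. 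Taking the supremum over the family $\{K_\alpha\}$ then gives the desired uniform bound $C$.

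The main obstacle is not any single computation but the justification of flat-norm convergence $N(K_\varepsilon)\to N(K)$; the smooth Jacobian estimate is elementary, and the monotonicity of intrinsic volumes is classical, but passing to the limit relies on the regularity theory for normal cycles of convex bodies due to Z\"ahle and Fu. An alternative that avoids this step would be to note that for each fixed $\omega\in\Omega^{n-1}(S\RR^n)$, the functional $K\mapsto N(K)(\omega)$ is a continuous valuation on convex bodies, evaluate on a finite collection of forms whose norms comass-dominate the volume form on fibers of $\pi_2$, and then deduce the uniform mass estimate directly from Hausdorff continuity and compactness of the Hausdorff-bounded family $\{K_\alpha\}$.
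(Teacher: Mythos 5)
The paper does not prove this lemma at all; it simply recalls it, pointing to the proof of Lemma~2.1.3 in Alesker--Fu. Your attempt is therefore an independent argument, and the overall strategy (establish a Jacobian bound for smooth strictly convex bodies via the inverse Gauss map, pass to general bodies by approximation and lower semicontinuity of mass) is in principle a workable route, and your Jacobian computation $\prod_i\sqrt{1+r_i^2}$ with the comparison to elementary symmetric functions and monotone intrinsic volumes is correct.

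However, the approximation step contains a genuine error. The outer parallel body $K_\varepsilon = K + \varepsilon B$ is \emph{not} $C^\infty$ and \emph{not} strictly convex in general: its boundary is only $C^{1,1}$, and if $K$ has a flat facet then $\partial K_\varepsilon$ contains a translate of that facet, so $K_\varepsilon$ is not strictly convex and not even $C^2$ along the edges where the flat and spherical pieces meet. Consequently the first-paragraph estimate, which presupposes a $C^2$ boundary with positive curvatures so that $\nu_K^{-1}$ is a diffeomorphism, does not apply to $K_\varepsilon$. You would either need a genuine smoothing (e.g.\ mollifying the support function and adding a small ball to enforce strict convexity, as in Schneider's book) before invoking flat convergence of normal cycles, or you should drop the approximation route altogether. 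The cleanest fix is the one behind the Alesker--Fu reference: the map $y\mapsto(\xi_K(y),\,y-\xi_K(y))$ from $\partial(K+B)$ onto $N(K)$, where $\xi_K$ is the metric projection onto $K$, is Lipschitz with a constant independent of $K$; the area formula then gives $\mathbf{M}(N(K))\le L^{n-1}\,\mathcal{H}^{n-1}(\partial(K+B))$, and monotonicity of surface area bounds the right-hand side by $\mathcal{H}^{n-1}(\partial((R+1)B))$. This avoids both smooth approximation and flat convergence entirely.

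The alternative sketched in your last paragraph is too vague to count as a proof. Mass is the supremum of $N(K)(\omega)$ over all forms of comass at most one, which is an infinite family, and it is not automatic that finitely many test forms control it. What you would actually need is a single form $\kappa$ (or a finite sum) supported over the relevant bounded region whose pairing with the orienting Legendrian $(n-1)$-vector of \emph{any} convex body's normal cycle is bounded below by a positive constant; candidates like $\pi_2^*\,d\mathrm{vol}_{S^{n-1}}$ fail because they degenerate on the flat parts of $N(K)$ when $K$ is a polytope. Such a form exists, but justifying this requires an explicit argument about the cone of attainable tangent planes of normal cycles, which you have not supplied.
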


\begin{lemma}\label{lem_weakconvergence}
 Let $\omega\in \Omega^{n-1}(S\RR^n)$ and let $f: S^{n-1}\rightarrow \RR$ be a continuous function. If $K_i\rightarrow K$ is a convergent sequence of convex bodies in $\RR^n$, then

$$N(K_i)(\pi_2^* f\cdot \omega)\rightarrow N(K)(\pi_2^*f\cdot \omega).$$
\end{lemma}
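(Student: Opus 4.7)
The strategy is a standard approximation argument: reduce the claim for continuous $f$ to the case of smooth $f$, where it follows from the well-known fact that normal cycles converge as currents under Hausdorff convergence of convex bodies. The mass bound from Lemma~\ref{lem_massbound} is what makes the reduction work.

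More concretely, I would first recall (from \cite{fu90}, \cite{fu94}, or \cite{alesker_fu08}) that if $K_i\to K$ in the Hausdorff metric, then $N(K_i)\to N(K)$ weakly as $(n-1)$-dimensional integral currents in $S\RR^n$. In particular, for every \emph{smooth} form $\eta\in\Omega^{n-1}(S\RR^n)$ we have $N(K_i)(\eta)\to N(K)(\eta)$. The only issue in the statement of the lemma is that $\pi_2^*f\cdot\omega$ need not be smooth, since $f$ is merely continuous.

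To handle this, fix $\varepsilon>0$. Because the sequence $K_i$ converges, there is a ball containing all of them together with $K$, and Lemma~\ref{lem_massbound} supplies a uniform mass bound $\mathbf{M}(N(K_i))\leq C$ and $\mathbf{M}(N(K))\leq C$. Let $\|\omega\|$ denote the comass norm of $\omega$, which is finite on the compact set where all the normal cycles are supported. Approximate $f$ uniformly on $S^{n-1}$ by a smooth function $f_\varepsilon\in C^\infty(S^{n-1})$ with $\|f-f_\varepsilon\|_\infty<\varepsilon$; this can be done e.g.\ by convolution on the sphere. Then for every convex body $K'$ among those under consideration,
\begin{equation*}
\bigl|N(K')(\pi_2^*(f-f_\varepsilon)\cdot\omega)\bigr|\;\leq\;\mathbf{M}(N(K'))\cdot \|f-f_\varepsilon\|_\infty\cdot \|\omega\|\;\leq\;C\|\omega\|\,\varepsilon.
\end{equation*}

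With this uniform estimate in hand, the rest is a routine three-$\varepsilon$ argument: write
\begin{align*}
N(K_i)(\pi_2^*f\cdot\omega)-N(K)(\pi_2^*f\cdot\omega)
&=N(K_i)(\pi_2^*(f-f_\varepsilon)\cdot\omega)\\
&\quad+N(K_i)(\pi_2^*f_\varepsilon\cdot\omega)-N(K)(\pi_2^*f_\varepsilon\cdot\omega)\\
&\quad+N(K)(\pi_2^*(f_\varepsilon-f)\cdot\omega),
\end{align*}
bound the first and third terms by $C\|\omega\|\varepsilon$ using the mass estimate, and bound the middle term by applying the weak convergence of $N(K_i)\to N(K)$ to the \emph{smooth} form $\pi_2^*f_\varepsilon\cdot\omega$. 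Letting $i\to\infty$ and then $\varepsilon\to 0$ yields the claim.

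The only step requiring care is the appeal to weak convergence of normal cycles under Hausdorff convergence; this is not proved here but is a standard fact about the normal cycle, and the reduction from continuous to smooth test functions via the uniform mass bound is the whole content of the argument.
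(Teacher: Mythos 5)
The paper does not actually supply a proof of this lemma; both Lemma~\ref{lem_massbound} and Lemma~\ref{lem_weakconvergence} are stated as recalled facts with a pointer to the proof of Lemma~2.1.3 in \cite{alesker_fu08}. Your argument is a correct, self-contained proof of the statement, and it is exactly the kind of argument one should extract from that reference. You correctly identify the only genuine subtlety: weak convergence of normal cycles is an assertion about testing against \emph{smooth} $(n-1)$-forms, whereas the test form $\pi_2^*f\cdot\omega$ is only continuous in the fiber direction when $f$ is merely continuous. Your fix --- uniformly approximate $f$ by smooth $f_\varepsilon$, control the error uniformly in $i$ via the mass bound of Lemma~\ref{lem_massbound} together with the mass--comass inequality, then run a three-$\varepsilon$ argument --- is precisely the standard mechanism, and the mass--comass estimate you use is the same one the paper itself invokes a few lines later in the conclusion of the proof of Theorem~\ref{thm:existence}. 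The one ingredient you take as given, weak convergence $N(K_i)\to N(K)$ for Hausdorff-convergent bodies, is indeed the content of the cited reference, so nothing is missing.
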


\begin{proof}[Conclusion of the proof of Theorem~\ref{thm:existence}]

We will first prove Theorem~\ref{thm:existence} under the additional assumption that $\phi, \psi$ are smooth and that $K, L$ have smooth boundaries and all principal curvatures are positive. The general case
will then follow by approximation.

We define $q_1\colon E\rightarrow G$ by $q_1(g,\xi,\eta,\zeta)=g$ and $q_2\colon E\rightarrow S\RR^n$ by $q_2(g,\xi,\eta,\zeta)= \zeta$. If we put $C= p^{-1}(N(K)\times N(L))$, then
$$ q_2(q_1^{-1}(g)\cap C) = N(K+g L).$$ 
In fact, this was the motivation for the definition of the bundle $E$. 

Let $dg$ be a bi-invariant volume form on $G$ such that $\int_G dg=1$. Since $q_1\colon C\rightarrow G$ is a smooth surjective map, Sard's theorem implies that $q_1(g)\cap C$ is a smooth submanifold for a.e.\ $g\in G$. 
Moreover, since the boundary of $K$ contains no segments,
 $q_2 \colon q_1^{-1}(g)\cap C \rightarrow N(K+gL)$ is a diffeomorphism for a.e.\ $g\in G$. Since the bundle $p\colon C\to N(K)\times N(L)$ is orientable and the normal cycles carry a natural orientation,
 we can equip $C$ with the local product orientation. By \eqref{eq:slice_orientation}, this induces an orientation 
on almost every slice $q_1^{-1}(g)\cap C$. We choose the orientation for the bundle such that $q_2\colon q_1^{-1}(g)\cap C \to N(K+gL)$ is orientation preserving for a.e.\ $g\in G$. 
Hence 
$$N(K+gL)(\omega)  = \int_{q_1^{-1}(g)\cap C} q_2^* \omega \qquad \text{for a.e. } g\in G$$
and for every $(n-1)$-form $\omega$ on $S\RR^n$.

We define $p_1\colon E\rightarrow S\RR^n$ by $p_1(g,\xi,\eta,\zeta)= \xi$ and $p_2\colon E\rightarrow S\RR^n$ by $p_2(g,\xi,\eta,\zeta)=\eta$. 
Since $q_2^*\pi_2^* \phi = p^*_1 \pi_2^*\phi$ and $q_2^*\pi_2^*(g \psi) = p^*_2 \pi_2^*\psi$,
we obtain
$$ q^*_2(\pi_2^*(\phi \; g\psi) \cdot \omega) = p^*_1 \pi_2^*\phi \cdot p^*_2 \pi_2^*\psi\cdot q^*_2\omega=:\omega'.$$

  Using the slicing formula \eqref{eq:slicing} and \eqref{eq:fiber_fubini} we obtain
\begin{align*}
\int_G  \left( \int_{q_1^{-1}(g)\cap C}  \omega'\right)  dg &= \int_C  q_1^* dg \wedge \omega'  \\							
    & =  \int_{N(K)\times N(L)}  p_*(q_1^* dg \wedge \omega')\\
& = \int_{N(K)\times N(L)}p^*_1\pi_2^* \phi \cdot p^*_2 \pi_2^*\psi\cdot   p_*q^*(dg \wedge \omega).
\end{align*}
It follows from Lemma~\ref{lem:invariance} that $p_*q^*(dg \wedge \omega)$ is a $\overline G \times \overline G$-invariant $(2n-2)$-form on $S\RR^n \times S\RR^n$.
Since every $\overline G \times \overline G$-invariant form on the product $S\RR^n\times S\RR^n$ is a sum of wedge products of $\overline G$-invariant forms on $S\RR^n$,
we obtain constants $c_{ij}$  such that 
$$ \int_{N(K)\times N(L)}p^*_1\pi_2^* \phi \cdot p^*_2 \pi_2^*\psi\cdot   p_*q^*(dg \wedge \omega) = \sum_{i,j}^{m} c_{ij} N(K)( \pi_2^*\phi \cdot \beta_i)\;   N(L)( \pi_2^*\psi \cdot \beta_j).$$
This establishes Theorem~\ref{thm:existence} in a special case.

Suppose now that $K$ and $L$ are general convex bodies and let $K_i\rightarrow K$ and 
$L_i\rightarrow L$ be sequences of convex bodies with smooth boundaries and such that all their principal curvatures positive. Since the collection $\{K_i+gL_i\colon i, g\}$ is contained in some sufficiently large ball,
 we have by Lemma~\ref{lem_massbound} and the definition of the comass of a differential form \cite{federer69}*{p. 358}
$$| N(K_i+gL_i)(\pi_2^*(\phi \; g\psi) \cdot \omega)| \leq \mathbf{M}(N(K_i+gL_i)) \mathbf{M}( \pi_2^*(\phi \; g\psi) \cdot \omega) \leq C$$
for some uniform constant $C>0$. 
For every $g\in G$ it follows from Lemma~\ref{lem_weakconvergence} that 
$$N(K_i+gL_i)(\pi_2^*(\phi \; g\psi) \cdot \omega)  \rightarrow N(K+gL)(\pi_2^*(\phi \; g\psi) \cdot \omega).$$
Hence, by the dominated convergence theorem, we obtain
$$\int_G N(K_i+gL_i)(\pi_2^*(\phi \; g\psi)\cdot  \omega )\; dg\rightarrow \int_G N(K+gL)( \pi_2^*(\phi \; g\psi) \cdot  \omega)\; dg.$$
This proves Theorem~1 for general convex bodies $K$ and $L$. 

Finally let $\phi$ and $\psi$ be bounded Baire functions. Then  $\phi$ and $\psi$ are the pointwise limits of sequences of uniformly bounded, smooth functions on $S^{n-1}$,
$$\phi_i(v)\rightarrow \phi(v)\qquad \text{and} \qquad \psi_i(v)\rightarrow \psi(v)$$
for every $v\in S^{n-1}$. Applying the dominated convergence theorem twice, we obtain 
$$\int_G N(K+gL)( \pi_2^*(\phi_i \; g\psi_i) \cdot  \omega )\; dg\rightarrow \int_G N(K+gL)(\pi_2^*(\phi \; g\psi)\cdot  \omega)\; dg.$$	
\end{proof}

\section{The algebra \texorpdfstring{$\Area^{G*}$}{Area*}}

We let $G \subset SO(n)$, $n\geq 2$, be a closed, connected subgroup acting transitively on the unit sphere and denote by $\calK (\RR^n)$ the space of convex bodies in $\RR^n$.

\subsection{General properties of the kinematic product}

A function which assigns to every convex body $K\subset \RR^n$ and Borel subset $U\subset S^{n-1}$ a real number is called a $G$-invariant area measure if there exists $\omega\in \Omega^{n-1}(S\RR^n)^G$ such that 
$$(K,U)\mapsto \int_{N(K)\cap \pi_2^{-1}(U) } \omega,$$
see \cite{wannerer13}. We denote by $\Area^G$ the space of all $G$-invariant area measures. In this terminology, Theorem~\ref{thm:existence} implies the following.

\begin{corollary}
 There exists a linear map $ A\colon  \Area^G \rightarrow \Area^G \otimes \Area^G$ called the \emph{local kinematic operator}
such that 
$$A(\Psi)(K,U; L, V)= \int_G \Psi(K+gL, U\cap gV) \; dg$$
for all convex bodies $K,L\subset\RR^n$ and Borel subsets $U,V\subset S^{n-1}$. 
\end{corollary}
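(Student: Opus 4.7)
The plan is to deduce the corollary directly from Theorem~\ref{thm:existence} by specializing the Baire functions $\phi,\psi$ to indicator functions of Borel subsets of $S^{n-1}$. First I would recall that, by the very definition of $\Area^G$, every $\Psi\in\Area^G$ admits a representing form $\omega\in\Omega^{n-1}(S\RR^n)^G$ with
$$\Psi(K,U)=\int_{N(K)\cap\pi_2^{-1}(U)}\omega=N(K)(\pi_2^*\mathbf{1}_U\cdot\omega).$$
Fix a basis $\beta_1,\ldots,\beta_m$ of $\Omega^{n-1}(S\RR^n)^G$ and let $\Psi_i\in\Area^G$ be the associated area measures $\Psi_i(K,U)=N(K)(\pi_2^*\mathbf{1}_U\cdot\beta_i)$. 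Since for $g\in G$ we have $g\mathbf{1}_V=\mathbf{1}_{gV}$ and $\mathbf{1}_U\cdot\mathbf{1}_{gV}=\mathbf{1}_{U\cap gV}$, applying Theorem~\ref{thm:existence} with $\phi=\mathbf{1}_U$, $\psi=\mathbf{1}_V$ and $\omega$ the chosen representing form of $\Psi$ yields constants $c_{ij}=c_{ij}(\omega)$ such that
$$\int_G\Psi(K+gL,U\cap gV)\,dg=\sum_{i,j=1}^{m}c_{ij}\,\Psi_i(K,U)\,\Psi_j(L,V)$$
for all convex bodies $K,L$ and Borel sets $U,V$.

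The natural candidate is therefore $A(\Psi):=\sum_{i,j}c_{ij}\,\Psi_i\otimes\Psi_j\in\Area^G\otimes\Area^G$. It remains to verify that this element depends only on the area measure $\Psi$ and not on the choice of representing form, and that the resulting assignment $\Psi\mapsto A(\Psi)$ is linear. Linearity in the representing form is transparent from the construction of the $c_{ij}$ in the proof of Theorem~\ref{thm:existence}: they arise from expressing the $\overline{G}\times\overline{G}$-invariant form $p_*q^*(dg\wedge\omega)$ in the product basis $\beta_i\wedge\beta_j$, hence depend linearly on $\omega$.

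The main (small) subtlety is well-definedness: if $\omega,\omega'\in\Omega^{n-1}(S\RR^n)^G$ both represent the same $\Psi\in\Area^G$, the corresponding tensors must coincide in $\Area^G\otimes\Area^G$. I would handle this via the standard observation that $\Area^G\otimes\Area^G$ embeds into the space of bilinear functions on $(\calK(\RR^n)\times\calB(S^{n-1}))^2$: pick a basis $\tilde\Psi_1,\ldots,\tilde\Psi_r$ of $\Area^G$, expand both tensors in $\tilde\Psi_k\otimes\tilde\Psi_l$, and note that if the associated bilinear function on convex bodies and Borel sets vanishes, then linear independence of the $\tilde\Psi_l$ (applied after fixing $(K,U)$) forces all coefficients to be zero. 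Since both tensors produce the \emph{same} bilinear function, namely $(K,U,L,V)\mapsto\int_G\Psi(K+gL,U\cap gV)\,dg$, they agree in $\Area^G\otimes\Area^G$. Linearity of $A$ in $\Psi$ then follows from linearity in the chosen representing forms, which can be selected linearly in $\Psi$ by splitting a linear section of the surjection $\Omega^{n-1}(S\RR^n)^G\twoheadrightarrow\Area^G$. No further obstacle is expected; the theorem supplies everything needed once indicator functions are admitted, and admitting them is exactly what the Baire generality in Theorem~\ref{thm:existence} is for.
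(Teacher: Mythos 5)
Your proof is correct and takes exactly the route the paper intends: the paper states the corollary with the single remark that it follows from Theorem~\ref{thm:existence}, and your specialization to indicator functions $\phi=\mathbf{1}_U$, $\psi=\mathbf{1}_V$ (using $g\mathbf{1}_V=\mathbf{1}_{gV}$ and the identity $\mathbf{1}_U\cdot\mathbf{1}_{gV}=\mathbf{1}_{U\cap gV}$) is precisely the implicit argument. Your extra care about well-definedness (the representing form is not unique) and linearity in $\Psi$ is a correct and welcome filling-in of details the paper leaves tacit.
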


The space of $\overline G$-invariant, continuous valuations is denoted by $\Val^G=\Val^G(\RR^n)$ and we write $\Val^{sm}$ the space of smooth, translation-invariant valuations.
We refer the reader to \cite{bernig_fu06} and the references therein  for more information on translation-invariant scalar valuations.

The globalization map is the surjective linear map $\glob\colon \Area^G\rightarrow \Val^G$ given by
$$\glob(\Psi)= \Psi(\;\cdot\; , S^{n-1}).$$
We denote by
$$ \bar a= (\id\otimes \glob)\circ A\colon  \Area^G \rightarrow \Area^G \otimes \Val^G$$
the semi-local kinematic operator. The additive kinematic operator is the linear map
$$ a\colon \Val^G \rightarrow \Val^G\otimes \Val^G$$
satisfying	
$$a(\phi)(K,L)= \int_G \phi(K+gL)\; dg,$$
see \cite{bernig_fu06}. The various kinematic operators fit in the following commutative diagram:
\begin{center}
 \begin{tikzpicture}
  \matrix (m) [matrix of math nodes,row sep=3em,column sep=4em,minimum width=2em] {
    \Area^G & \Area^G\otimes \Area^G \\
    \Area^G & \Area^G\otimes \Val^G \\
    \Val^G & \Val^G\otimes \Val^G \\};
  \path[-stealth]
    (m-1-1) edge node [right] {$\id $} (m-2-1)
            edge node [above] {$A$} (m-1-2)
    (m-2-1) edge node [right] {$\glob $} (m-3-1)
            edge node [above] {$\bar a$} (m-2-2)
    (m-1-2) edge node [right] {$\id \otimes \glob$} (m-2-2)
    (m-2-2) edge node [right] {$\glob \otimes \id$} (m-3-2)
    (m-3-1) edge node [above] {$a$} (m-3-2);
\end{tikzpicture}
\end{center}

We denote by $\phi \cdot \psi$ the product \cite{alesker04}, by $\phi*\psi$ the convolution \cite{bernig_fu06} and by $\hat \phi$ the Fourier transform \cite{alesker11} of $\phi,\psi\in\Val^{sm}$. The Fourier transform
has the fundamental property $\widehat{\phi \cdot \psi} = \hat \phi * \hat \psi$. 
The Poincar\'e duality map \cites{alesker04, bernig_fu06} is denoted by $\PD\colon \Val^{sm} \rightarrow \left(\Val^{sm}\right)^*$. 
Let  $\chi^*\colon \Val\rightarrow \RR$ be the linear functional $\left\langle \chi^*,\phi\right\rangle = \phi(\{0\})$, $0\in\RR^n$. It was shown in \cite{bernig_fu06} that 
\begin{equation}\label{eq:poincare_pairing_even}\left\langle \PD(\phi), \psi\right\rangle = \left\langle \chi^*, \phi* \psi\right\rangle\end{equation}
for even valuations $\phi,\psi\in\Val^{sm}$. We will see later (Proposition~\ref{lem:poincare}) that
\begin{equation}\label{eq:poincare_pairing}\left\langle \PD(\phi), \psi\right\rangle = -\left\langle \chi^*, \phi* \psi\right\rangle\end{equation}
if $\phi, \psi\in\Val^{sm}$ are odd valuations. It was proved in \cite{bernig_fu11} that every valuation in $\Val^G$ is even.

We define $\mu_C^G\in \Val^G$ by $\mu_C^G=\int_G \vol( \;\cdot \; + gC) \; dg = a(\vol)(\;\cdot\;,C)$.

\begin{proposition}\label{prop:poincare_evaluation}
$$\left\langle \PD(\mu_C^G), \phi\right\rangle = \phi(C)$$
for every $\phi\in\Val^G$. In particular, $\Val^G$ is spanned by $\mu_C^G$, $C\in \calK(\RR^n)$.
\end{proposition}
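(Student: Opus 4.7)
The plan is to combine the Poincar\'e pairing formula \eqref{eq:poincare_pairing_even} with the fact that $\vol$ is the identity element for the convolution product. All valuations in $\Val^G$ are even (by \cite{bernig_fu11}) and automatically smooth, so \eqref{eq:poincare_pairing_even} applies and gives, for any $\phi \in \Val^G$,
$$\langle \PD(\mu_C^G), \phi\rangle = \langle \chi^*, \mu_C^G * \phi\rangle = (\mu_C^G * \phi)(\{0\}).$$

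Next, I would exchange the convolution with the integration over $G$ defining $\mu_C^G$. Writing $\vol_{gC}(L) := \vol(L+gC)$, continuity of convolution gives
$$\mu_C^G * \phi = \int_G \vol_{gC} * \phi\; dg,$$
so it remains to evaluate each $(\vol_{gC} * \phi)(\{0\})$. The crucial input here is the identity
$$(\vol_K * \phi)(\{0\}) = \phi(K),$$
valid for every $\phi \in \Val^{sm}$ and every convex body $K$, which in the language of Alesker-Poincar\'e duality is just the statement that the functional ``evaluation at $K$'' is represented under $\PD$ by $\vol_K$. Granted this identity, one has $(\vol_{gC} * \phi)(\{0\}) = \phi(gC) = \phi(C)$ by $G$-invariance of $\phi$, and integrating against the normalized Haar measure of $G$ yields $\langle \PD(\mu_C^G), \phi\rangle = \phi(C)$.

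The spanning statement then comes essentially for free. Since $\Val^G$ is finite-dimensional and Alesker's Poincar\'e duality $\PD \colon \Val^G \to (\Val^G)^*$ is an isomorphism, it suffices to note that the images $\PD(\mu_C^G)$, which by the first part are the evaluation functionals $\phi \mapsto \phi(C)$, separate points of $\Val^G$; and this is immediate, since a nonzero continuous valuation cannot vanish on every convex body.

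The hard part will be justifying the identity $(\vol_K * \phi)(\{0\}) = \phi(K)$ in full generality. For $K$ with smooth boundary of positive Gauss curvature, $\vol_K$ lies in $\Val^{sm}$ and the identity should be extracted from the explicit convolution formula of Bernig-Fu. For general $K$, I would pass to a limit of such bodies, combining Lemma~\ref{lem_weakconvergence} with the fact that both sides are continuous in $K$ in the Hausdorff topology.
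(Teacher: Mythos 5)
Your proposal is correct and takes essentially the same route as the paper: both reduce $\langle\PD(\mu_C^G),\phi\rangle$ to $\langle\chi^*,\mu_C^G*\phi\rangle$, then exchange convolution with the $G$-integral and use $G$-invariance of $\phi$ to land on $\phi(C)$. The only remark worth making is that the step you flag as hard is not really a bottleneck: since $\mu_C^G\in\Val^G\subset\Val^{sm}$, the convolution $\mu_C^G*\phi$ is already defined, and the Fubini swap $\mu_C^G*\phi(K)=\int_G\phi(K+gC)\,dg$ (which the paper just calls Fubini's theorem) is what needs justification, not the identity $(\vol_K*\phi)(\{0\})=\phi(K)$ for non-smooth $K$, which — interpreted as $\phi(\{0\}+K)=\phi(K)$ — is trivial.
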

\begin{proof}
 This is proved as Proposition~2.17 in \cite{bernig_etal13}.  It follows from the definition of $a$ that
$$[(\chi^* \otimes \id) \circ a](\phi)= a(\phi)(\{0\}, \; \cdot\;) = \phi$$
Hence Fubini's theorem implies
$$\left\langle \PD(\mu_C^G), \phi\right\rangle = \left\langle \chi^*, \mu_C^G* \phi \right\rangle =\left\langle \chi^*, a(\phi)(\;\cdot\;,C)\right\rangle =\phi(C).$$
\end{proof}

The module product of \cite{wannerer13} is denoted by $\bar m\colon  \Val^G\otimes \Area^G\rightarrow \Area^G$, $\bar m (\phi, \Psi) = \phi * \Psi$. If $C$ is a convex body and $\Psi\in \Area^G$, then
$$\mu_C^G * \Psi (K,U) = \int_G \Psi(K+gC, U)\;dg$$
for all convex bodies $K$ and Borel set $U\subset S^{n-1}$. 

\begin{lemma}
 $A$, $\bar a$, and $a$ are all compatible with convolution with elements from $\Val^G$, i.e.\ 
\begin{align}A(\phi * \Psi) 	&= (\phi \otimes \vol) * A(\Psi) = (\vol \otimes \phi) * A(\Psi),\notag\\
	     \bar a(\phi * \Psi) 	&= (\phi \otimes \vol) * \bar a( \Psi) = (\vol \otimes \phi) * \bar a (\Psi),\notag\\
	      a(\phi * \psi)		&= (\phi \otimes \vol) * a(\psi) = (\vol \otimes \phi) * a(\psi),\notag\\
	      \bar a (\phi *\Psi) 	&= a(\phi) * (\Psi  \otimes \vol), \label{eq:semilocal}
\end{align}
whenever $\phi,\psi\in\Val^G$ and $\Psi\in \Area^G$.
\end{lemma}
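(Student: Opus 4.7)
The plan is to reduce all four identities to the case $\phi = \mu_C^G$ (and $\psi = \mu_D^G$ in the third), which suffices by linearity and Proposition~\ref{prop:poincare_evaluation}. The main ingredients will then be the explicit formula $\mu_C^G * \Psi(K,U) = \int_G \Psi(K+gC,U)\,dg$ stated just before the lemma, its scalar analogue $\mu_C^G * \phi(K) = \int_G \phi(K+gC)\,dg$ coming from the Bernig--Fu convolution, together with Fubini's theorem and bi-invariance of the Haar measure on the compact group $G$.

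For the first identity I would evaluate all three expressions at $(K,U;L,V)$ and, upon fixing a decomposition $A(\Psi) = \sum_i \Psi_i \otimes \Psi_i' \in \Area^G \otimes \Area^G$, check that each unfolds to the same iterated integral over $G \times G$ of $\Psi$ evaluated on Minkowski sums. The only nontrivial rearrangement occurs in the term $(\vol \otimes \mu_C^G) * A(\Psi)$, which a priori produces $\Psi(K + gL + ghC, U \cap gV)$; the substitution $h \mapsto g^{-1}h$, legitimate by bi-invariance of $dg$, reduces this to the common form. The second identity follows by applying $(\id \otimes \glob)$ to the first, using that globalization commutes with the module product ($\glob(\phi * \Psi) = \phi * \glob(\Psi)$, immediate from the integral formula). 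The third follows from the second by lifting $\psi$ to some $\Psi \in \Area^G$ with $\glob(\Psi) = \psi$ (surjectivity of $\glob$) and then applying $(\glob \otimes \id)$, using the commutation $a \circ \glob = (\glob \otimes \id) \circ \bar a$ from the diagram.

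The fourth identity \eqref{eq:semilocal} is the main obstacle, since its right-hand side mixes the scalar kinematic operator $a$ with the module action on $\Area^G$. My strategy is to show that both sides, evaluated at $(K,U;L)$, equal the common expression $\bigl(a(\phi)(\cdot,L)\bigr) * \Psi(K,U)$. For the left-hand side, $\bar a(\phi * \Psi)(K,U;L) = \int_G (\phi*\Psi)(K+gL,U)\,dg = \mu_L^G * (\phi * \Psi)(K,U) = (\mu_L^G * \phi) * \Psi(K,U)$ by associativity of the module action, and the identity $\mu_L^G * \phi(K) = \int_G \phi(K+gL)\,dg = a(\phi)(K,L)$ identifies $\mu_L^G * \phi$ with the valuation $a(\phi)(\cdot,L)$. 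For the right-hand side, fix any finite decomposition $a(\phi) = \sum_k \phi_k'' \otimes \phi_k''' \in \Val^G \otimes \Val^G$. Using that $\vol$ is the convolution identity, the right-hand side evaluates to $\sum_k (\phi_k'' * \Psi)(K,U)\, \phi_k'''(L)$. Since $\sum_k \phi_k''(\cdot)\phi_k'''(L) = a(\phi)(\cdot,L)$ by definition of the tensor decomposition, the linearity of the module action in its valuation slot collapses this sum to $a(\phi)(\cdot,L) * \Psi(K,U)$, matching the left-hand side. The only nonroutine point is this last compatibility between an abstract tensor factorization of $a(\phi)$ and the concrete module structure; everything else is bookkeeping with Fubini and Haar invariance.
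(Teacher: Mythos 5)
Your proof is correct. Note that the paper does not actually spell out an argument here: its ``proof'' consists of a single citation to Theorem~2.19 of Bernig--Fu--Solanes, so there is no detailed proof in the text to compare against. Your reconstruction uses exactly the natural tools for such a compatibility statement, and the internal logic checks out: the reduction to $\phi=\mu_C^G$ via Proposition~\ref{prop:poincare_evaluation} is legitimate; the first identity follows from unfolding both sides into double integrals over $G\times G$ and applying Fubini together with the left-invariance of Haar measure (your substitution $h\mapsto g^{-1}h$ is precisely what is needed to move from $\Psi(K+gL+ghC,\,U\cap gV)$ to $\Psi(K+gL+hC,\,U\cap gV)$); the second and third identities follow from the first by applying $\id\otimes\glob$ and $\glob\otimes\id$ together with $\glob(\phi*\Psi)=\phi*\glob(\Psi)$, surjectivity of $\glob$, and the commutative diagram.

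For the fourth identity you in fact bypass the reduction to $\mu_C^G$ and instead use the associativity of the module action $\mu_L^G*(\phi*\Psi)=(\mu_L^G*\phi)*\Psi$ together with the identification $\mu_L^G*\phi=a(\phi)(\cdot,L)$; this is the cleanest route. The point you flag as ``nonroutine''---passing from a tensor decomposition $a(\phi)=\sum_k\phi_k''\otimes\phi_k'''$ to $\sum_k(\phi_k''*\Psi)(K,U)\,\phi_k'''(L)=\bigl(a(\phi)(\cdot,L)\bigr)*\Psi(K,U)$---is actually just linearity of the module action in the $\Val^G$ slot, so it is not a gap. Note that the module axiom you invoke is indeed established in the reference \cite{wannerer13} and is implicitly available to you, so the argument is complete.
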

\begin{proof}
This can be proved as Theorem~2.19 in \cite{bernig_etal13}. 
\end{proof}

The semi-local kinematic operator is in the following sense dual to the module product.
\begin{lemma}\label{lem:semilocal} Let $\bar m\colon \Hom(\Val^{G} \otimes \Area^{G},  \Area^{G}) = \Hom (\Area^{G}, \Area^{G}\otimes \Val^{G*} )$ be the module product. Then
 $$\bar m = (\operatorname{id} \otimes \PD) \circ \bar a.$$
\end{lemma}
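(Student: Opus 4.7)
The plan is to test the asserted equality of maps $\Area^G \to \Area^G\otimes \Val^{G*}$ by pairing with elements of a spanning family for $\Val^G$. Proposition~\ref{prop:poincare_evaluation} conveniently gives us such a family, namely $\{\mu_C^G : C\in\calK(\RR^n)\}$, and moreover computes the Poincar\'e pairing against it as evaluation at $C$. This single fact, together with the commutative diagram relating $A$, $\bar a$, and $\glob$, will drive everything.

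First I would unwind the Hom-tensor adjunction. Writing $\bar a(\Psi)=\sum_i \Psi_i\otimes \phi_i$, the composition $(\id\otimes \PD)\circ \bar a$ sends $\Psi$ to $\sum_i \Psi_i\otimes \PD(\phi_i)$; under the identification $\Hom(\Val^G\otimes \Area^G,\Area^G)=\Hom(\Area^G,\Area^G\otimes \Val^{G*})$, the module product $\bar m$ sends $\Psi$ to the unique element whose pairing with $\phi\in\Val^G$ equals $\phi*\Psi$. Hence it suffices to establish, for every convex body $C\subset\RR^n$ and every $\Psi\in\Area^G$, the identity
\[
\mu_C^G*\Psi \;=\; \sum_i \langle \PD(\phi_i),\mu_C^G\rangle\,\Psi_i.
\]

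Next I would evaluate both sides on a pair $(K,U)$. The left-hand side is $\int_G \Psi(K+gC,U)\,dg$ directly from the formula for $\bar m$ recalled above Lemma~\ref{lem:semilocal}. For the right-hand side, I invoke the symmetry of the Poincar\'e pairing: by \eqref{eq:poincare_pairing_even} and the commutativity of convolution the pairing is symmetric on even valuations, and by the Bernig--Fu result cited in the text $\Val^G$ consists entirely of even valuations, so $\langle \PD(\phi_i),\mu_C^G\rangle = \langle \PD(\mu_C^G),\phi_i\rangle = \phi_i(C)$ by Proposition~\ref{prop:poincare_evaluation}. Therefore the right-hand side becomes $\sum_i \phi_i(C)\,\Psi_i(K,U) = \bar a(\Psi)(K,U;C)$.

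Finally, I close the loop using the commutative diagram following the definition of $\bar a$: since $\bar a = (\id\otimes \glob)\circ A$, we get $\bar a(\Psi)(K,U;C) = A(\Psi)(K,U;C,S^{n-1}) = \int_G \Psi(K+gC,U\cap gS^{n-1})\,dg = \int_G \Psi(K+gC,U)\,dg$, which matches the left-hand side. The only nontrivial input is the symmetry of the Poincar\'e pairing on $\Val^G$, and that is precisely where the evenness result of Bernig--Fu enters; all remaining manipulations are formal applications of the definitions and of Proposition~\ref{prop:poincare_evaluation}.
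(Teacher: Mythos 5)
Your proof is correct and follows essentially the same route as the paper: both arguments reduce to pairing against the spanning family $\mu_C^G$ supplied by Proposition~\ref{prop:poincare_evaluation} and then identify the resulting expression with $\bar a(\Psi)(K,U;C)=\int_G\Psi(K+gC,U)\,dg$. The only minor variation is that the paper reaches $\mu_C^G*\Psi(K,U)=\bar a(\Psi)(K,U;C)$ through the compatibility relation~\eqref{eq:semilocal}, whereas you compute both sides directly from the defining integrals via $\bar a=(\id\otimes\glob)\circ A$; you also spell out the symmetry of the Poincar\'e pairing (via evenness of $\Val^G$), a step the paper leaves implicit.
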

\begin{proof}
 Using \eqref{eq:semilocal}, equation (15) of \cite{wannerer13}, the definition of $\mu_C^G$, and Proposition~\ref{prop:poincare_evaluation}, we obtain
$$\mu_C^G * \Psi (K,U)  = \bar a(\vol * \Psi)(K,U; C) = \bar a (\Psi)(K,U; C) = \left[(\id \otimes \PD) \circ \bar a\right] (\Psi)(K,U; \mu_C^G)$$
for every convex body $K$ and Borel set $U\subset S^{n-1}$. 
\end{proof}

Recall that a co-algebra consists of a vector space $X$ over some field $\KK$, a linear map $C\colon X\rightarrow  X\otimes X$ and a linear functional $\varepsilon \colon X\rightarrow \KK$ satisfying
$$(\id_X \otimes C)\circ C= (C\otimes \id_X)\circ C \qquad \text{(co-associativity)}$$
and 
$$(\varepsilon \otimes \id_X)\circ C=\id_X= (\id_X \otimes \varepsilon)\circ C.$$
The map $C$ is called a co-product and $\varepsilon$ is called a co-unit. A co-algebra is called co-commutative if $i\circ  C = C$,
where $i\colon X\otimes X \rightarrow X\otimes X$ denotes the interchange map $i(x\otimes y) = y\otimes x$.

\begin{proposition} \label{prop:co-algebra}
$(\Area^G, A,\chi^*\circ \glob)$ is a co-commutative co-algebra.
\end{proposition}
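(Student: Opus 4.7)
The plan is to verify each of the three structural axioms (co-associativity, co-commutativity, and the co-unit property) by unpacking the defining formula
\[ A(\Psi)(K,U;L,V) = \int_G \Psi(K+gL,\,U\cap gV)\,dg \]
and exploiting the bi-invariance of Haar measure on $G$ together with the $\overline G$-invariance of $\Psi\in\Area^G$. Because Theorem~\ref{thm:existence} guarantees that $A(\Psi)$ genuinely lives in $\Area^G\otimes\Area^G$, and because elements of $\Area^G\otimes\Area^G\otimes\Area^G$ are determined by their values on triples $(K,U;L,V;M,W)$, it suffices to check each identity on such test data.

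For co-associativity I would compute both compositions on a test triple. Applying $A$ in the second slot gives
\[ [(\id\otimes A)\circ A](\Psi)(K,U;L,V;M,W) = \int_G\!\!\int_G \Psi\bigl(K+g_1L+g_1g_2M,\,U\cap g_1V\cap g_1g_2W\bigr)\,dg_2\,dg_1, \]
while applying $A$ in the first slot produces
\[ [(A\otimes \id)\circ A](\Psi)(K,U;L,V;M,W) = \int_G\!\!\int_G \Psi\bigl(K+g_1L+g_2M,\,U\cap g_1V\cap g_2W\bigr)\,dg_2\,dg_1. \]
The change of variables $g_2\mapsto g_1g_2$ in the inner integral, which preserves Haar measure, identifies the two expressions; Fubini is implicit in swapping the iterated integral with integration against the currents. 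Co-commutativity is handled by a similar substitution: swapping the tensor factors yields $\int_G\Psi(L+gK,V\cap gU)\,dg$, and then $g\mapsto g^{-1}$ combined with the $\overline G$-invariance of $\Psi$ (replacing the argument $(L+g^{-1}K,V\cap g^{-1}U)$ by its $g$-translate $(gL+K,gV\cap U)$) returns the original expression for $A(\Psi)(K,U;L,V)$.

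For the co-unit property, I would evaluate the first tensor factor of $A(\Psi)$ against $\chi^*\circ\glob$, which amounts to plugging in $K=\{0\}$ and $U=S^{n-1}$:
\[ [((\chi^*\circ\glob)\otimes \id)\circ A](\Psi)(L,V) = \int_G \Psi(gL,\,gV)\,dg = \Psi(L,V), \]
using $\Psi(gL,gV)=\Psi(L,V)$ and $\int_G dg=1$. Co-commutativity then gives the symmetric identity automatically.

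I do not expect any deep obstacle: all three axioms reduce to elementary manipulations with Haar measure and the $G$-invariance of $\Psi$. The only point requiring a small amount of care is the passage from the pointwise formula defining $A(\Psi)$ as a bilinear functional to its interpretation as an element of $\Area^G\otimes\Area^G$, but this has already been legitimized by Theorem~\ref{thm:existence}, so the verifications on test inputs $(K,U;L,V)$ (and triples for co-associativity) are sufficient.
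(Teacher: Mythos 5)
Your proposal is correct and follows essentially the same line of argument as the paper: all three axioms are verified by direct computation from the integral defining $A$, using the (bi‑)invariance of the Haar measure on the compact group $G$ and the $\overline G$‑invariance of $\Psi$. The only cosmetic difference is that you spell out the substitution $g\mapsto g^{-1}$ and the role of the invariance of $\Psi$ in the co-commutativity and co-unit steps, which the paper leaves implicit, and you deduce the second co-unit identity from co-commutativity rather than checking it directly.
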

\begin{proof}
 Let $K,L,M\subset \RR^n$ be convex bodies, let $U,V,W\subset S^{n-1}$ be Borel sets and let $\Psi\in \Area^G$. By the invariance of the Haar measure,
\begin{align*} \left[(\id \otimes A)\circ A\right](\Psi)(K,U; L,V; M,W) &=\int_G \left( \int_G \Psi(K +g(L +hM), U \cap g (V \cap h W)) \; dg\right) \; dh \\
    & = \int_G \left( \int_G \Psi(K + gL + h M , U \cap g V \cap h W) \; dg\right) \; dh \\
    & = \left[(A \otimes \id)\circ A\right](\Psi)(K,U; L,V; M,W).
\end{align*}
Thus $A$ is co-associative. 
Since
$$\left[(\chi^*\circ \glob\otimes  \id )\circ A\right] (\Psi)(K,U) = A(\Psi)(\{0\},S^{n-1}; K,U)= \Psi(K,U)=  \left[(\id \otimes\chi^*\circ \glob )\circ A \right] (\Psi)(K,U),$$
$\chi^*\circ \glob$ is a co-unit. The co-commutativity of $A$ follows immediately from the invariance of the Haar measure.
\end{proof}

If $(X,C,\varepsilon)$ is a co-algebra and $X$ is finite-dimensional, then $(X^*,C^*,\varepsilon)$ is an algebra with unit $\varepsilon$. If $C$ is co-commutative, then $C^*$ is commutative. 
In particular, we obtain from Proposition~\ref{prop:co-algebra}
that $(\Area^{G*},A^*,\chi^*\circ \glob)$ is a commutative algebra. 
\begin{definition}
 We call $A^*\colon \Area^{G*}\otimes \Area^{G*}\rightarrow \Area^{G*}$ the kinematic product on 
$\Area^{G*}$.  We will also write $\Lambda_1 * \Lambda_2$ instead of $A^*(\Lambda_1\otimes \Lambda_2)$ for $\Lambda_1,\Lambda_2\in \Area^{G*}$
\end{definition}

\begin{lemma}\label{lem:kinprod_module}
Let $\Lambda\in \Area^{G*}$, $\phi\in \Val^G$, and put $m_\phi(\Phi):= \phi * \Phi$. Then
\begin{equation}\label{eq:kinprod}\Lambda * \glob^*(\PD(\phi)) = m_\phi^*(\Lambda).\end{equation}

\end{lemma}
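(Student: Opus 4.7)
The plan is to test the claimed equality against an arbitrary $\Psi\in\Area^G$ and reduce both sides to the same scalar via the already-established Lemma~\ref{lem:semilocal}.

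First I would write the right-hand side as $\langle m_\phi^*(\Lambda),\Psi\rangle=\langle \Lambda,\phi*\Psi\rangle$, and unpack the left-hand side using the definition of the kinematic product as the dual of the co-product $A$ and the definition of $\glob^*$:
\begin{equation*}
\langle \Lambda * \glob^*(\PD(\phi)),\Psi\rangle = \langle \Lambda\otimes \PD(\phi),(\id\otimes \glob)\circ A(\Psi)\rangle = \langle \Lambda\otimes \PD(\phi),\bar a(\Psi)\rangle.
\end{equation*}
Writing $\bar a(\Psi)=\sum_i \Psi_i\otimes \psi_i\in \Area^G\otimes \Val^G$, this equals $\sum_i \langle \Lambda,\Psi_i\rangle\,\langle \PD(\phi),\psi_i\rangle$.

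Next I would invoke Lemma~\ref{lem:semilocal}: $\bar m=(\id\otimes \PD)\circ \bar a$, viewed as an element of $\Area^G\otimes \Val^{G*}$, represents the module product. Evaluating the $\Val^{G*}$-factor at $\phi$, we get $\sum_i \Psi_i\cdot \langle \PD(\psi_i),\phi\rangle = \phi*\Psi$. To identify this with what appears on the left, we need $\langle \PD(\phi),\psi_i\rangle=\langle \PD(\psi_i),\phi\rangle$; this is the symmetry of the Poincar\'e pairing and follows from \eqref{eq:poincare_pairing_even} together with the cited fact (\cite{bernig_fu11}) that every valuation in $\Val^G$ is even, so that $\langle \PD(\phi),\psi\rangle=\langle\chi^*,\phi*\psi\rangle=\langle\chi^*,\psi*\phi\rangle=\langle \PD(\psi),\phi\rangle$ for $\phi,\psi\in \Val^G\subset \Val^{sm}$.

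Combining these steps,
\begin{equation*}
\langle \Lambda * \glob^*(\PD(\phi)),\Psi\rangle = \sum_i \langle \Lambda,\Psi_i\rangle\langle \PD(\psi_i),\phi\rangle = \langle \Lambda,\phi*\Psi\rangle = \langle m_\phi^*(\Lambda),\Psi\rangle,
\end{equation*}
which proves \eqref{eq:kinprod}. The only non-bookkeeping step is the symmetry of Poincar\'e duality; since it is available here through the evenness of $\Val^G$, the argument is short and the substantive input is entirely contained in Lemma~\ref{lem:semilocal}.
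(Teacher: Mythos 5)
Your proof is correct and takes the same route as the paper: test against an arbitrary $\Psi\in\Area^G$, unravel the definitions to reduce both sides to an evaluation against $\bar a(\Psi)$, and close with Lemma~\ref{lem:semilocal}. The only difference is that you make explicit the symmetry $\langle\PD(\phi),\psi\rangle=\langle\PD(\psi),\phi\rangle$ coming from the evenness of $\Val^G$, which the paper's terse chain of equalities leaves implicit.
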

\begin{proof} Using Lemma~\ref{lem:semilocal}, we compute
 \begin{align*}
  \left\langle m_\phi^*(\Lambda), \Phi\right\rangle  & = \left\langle  \Lambda , m_\phi(\Phi) \right\rangle = \left\langle \Lambda \otimes  \PD(\phi) , \bar a (\Phi)\right\rangle\\
     & = \left\langle \Lambda \otimes \glob^*(\PD(\phi)) , A(\Phi)\right\rangle =  \left\langle A^*( \Lambda \otimes \glob^*(\PD(\phi))) , \Phi\right\rangle\\
      & =  \left\langle  \Lambda * \glob^*(\PD(\phi)) , \Phi\right\rangle.
 \end{align*}

\end{proof}

\begin{corollary} \label{cor:subalgebra}
 If $\phi, \psi\in \Val^G$, then 
$$\glob^*(\PD(\phi)) * \glob^*(\PD(\psi)) = \glob^*(\PD(\phi * \psi)).	$$
In particular, the image of $\glob^*$ is a subalgebra of $\Area^{G*}$.
\end{corollary}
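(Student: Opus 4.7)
My plan is to pair the claimed identity with an arbitrary $\Phi\in\Area^G$, apply Lemma~\ref{lem:kinprod_module} to the left-hand side, and reduce the statement to the fact that $\glob\colon\Area^G\to\Val^G$ is a homomorphism of $\Val^G$-modules (with $\Val^G$ acting on itself by convolution). Setting $\Lambda=\glob^*(\PD(\phi))$ in Lemma~\ref{lem:kinprod_module} gives
$$\bigl\langle\glob^*(\PD(\phi))*\glob^*(\PD(\psi)),\Phi\bigr\rangle = \bigl\langle m_\psi^*(\glob^*(\PD(\phi))),\Phi\bigr\rangle = \bigl\langle\PD(\phi),\glob(\psi*\Phi)\bigr\rangle.$$
Meanwhile, since $\Val^G$ consists of even valuations, two applications of the Poincar\'e pairing formula \eqref{eq:poincare_pairing_even} and the associativity of convolution yield
$$\bigl\langle\glob^*(\PD(\phi*\psi)),\Phi\bigr\rangle = \bigl\langle\chi^*,\phi*\psi*\glob(\Phi)\bigr\rangle = \bigl\langle\PD(\phi),\psi*\glob(\Phi)\bigr\rangle.$$
Comparing these two expressions, the corollary reduces to the single auxiliary identity
$$\glob(\psi*\Phi)=\psi*\glob(\Phi),\qquad \psi\in\Val^G,\ \Phi\in\Area^G.$$

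To establish this compatibility, I would appeal to Proposition~\ref{prop:poincare_evaluation} and verify the identity on the spanning set $\{\mu_C^G:C\in\calK(\RR^n)\}$. For $\psi=\mu_C^G$, setting $U=S^{n-1}$ in the explicit formula $(\mu_C^G*\Phi)(K,U)=\int_G\Phi(K+gC,U)\,dg$ recalled in Section~3.1 gives
$$\glob(\mu_C^G*\Phi)(K)=\int_G\glob(\Phi)(K+gC)\,dg,$$
while the analogous formula $(\mu_C^G*\alpha)(K)=\int_G\alpha(K+gC)\,dg$ for scalar $\alpha\in\Val^G$, which is immediate from the definition of $\mu_C^G$ together with translation-equivariance of convolution, identifies the right-hand side with $(\mu_C^G*\glob(\Phi))(K)$. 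Bilinear extension in $\psi$ then completes the auxiliary step, and the ``in particular'' statement, namely that $\glob^*\circ\PD\colon\Val^G\to\Area^{G*}$ is an algebra homomorphism and its image is a subalgebra of $\Area^{G*}$, is then immediate.

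The argument is a formal unwinding of Lemma~\ref{lem:kinprod_module}, the Poincar\'e pairing formula, and the identification of $\Val^G$ as the span of the $\mu_C^G$; the only step with genuine geometric content is the module-homomorphism property of $\glob$, which I do not expect to pose real difficulty, since it is built into the very definition of the module product via its compatibility with Minkowski addition.
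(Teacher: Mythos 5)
Your proof is correct, and it fills in a verification the paper leaves implicit (the corollary is stated without proof, following Lemma~\ref{lem:kinprod_module}). Your route and the one the paper most likely intends both hinge on Lemma~\ref{lem:kinprod_module}; the difference is which auxiliary fact about the $\Val^G$-module structure is invoked. You reduce the claim to the identity $\glob(\psi*\Phi)=\psi*\glob(\Phi)$ and then verify it on the spanning set $\{\mu_C^G\}$ from Proposition~\ref{prop:poincare_evaluation}, which works. A shorter, purely formal alternative stays entirely inside Lemma~\ref{lem:kinprod_module}: applying it with $\Lambda$ the unit $\chi^*\circ\glob$ gives $\glob^*(\PD(\phi))=m_\phi^*(\chi^*\circ\glob)$, and then
$$\glob^*(\PD(\phi))*\glob^*(\PD(\psi))=m_\psi^*\bigl(m_\phi^*(\chi^*\circ\glob)\bigr)=m_{\phi*\psi}^*(\chi^*\circ\glob)=\glob^*(\PD(\phi*\psi)),$$
using only that $m_\phi\circ m_\psi=m_{\phi*\psi}$, i.e.\ the associativity $(\phi*\psi)*\Phi=\phi*(\psi*\Phi)$ of the module product, with no appeal to $\glob$ being a module map nor to the $\mu_C^G$ spanning set. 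Your argument and this one rely on facts of comparable depth about the module structure, so the two are essentially equivalent in substance; the algebraic version just avoids the detour through Poincar\'e pairing and the explicit formula for $\mu_C^G*\Phi$.
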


\subsection{The module of unitary area measures}
For the convenience of the reader we collect for quick reference those definitions and results of \cite{wannerer13} which are most important for us in the following.

Recall that the forms $\alpha, \beta, \gamma, \theta_0, \theta_1, \theta_2$, and  $\theta_s$, first defined in  \cite{bernig_fu11}, generate the algebra of unitarly invariant forms on the sphere bundle
$S\CC^n$. For all integers $0\leq k\leq 2n-1$ and $q\geq 0$ we denote by 
\begin{align*}B_{k,q}, &  \qquad  \max\{0,k-n\}\leq q <  k/2, \\    
   \Gamma_{k,q},& \qquad \max\{0,k-n+1\}\leq q \leq   k/2 ,
\end{align*}
the unitary area measures represented by the forms $\beta_{k,q}$ and $\gamma_{k,q}$. The collection of the $B_{k,q}$ and $\Gamma_{k,q}$ area measures is a basis of $\Area^{U(n)}$. Another very useful basis is given by
\begin{equation}\label{eq:DeltaN_basis}
    \begin{aligned} \Delta_{k,q} & = \frac{k-2q}{2n-k} B_{k,q} + \frac{2(n-k+q)}{2n-k} \Gamma_{k,q}, \qquad \max\{0,k-n\}\leq q \leq   k/2,\\
  N_{k,q} &= \frac{2(n-k+q)}{2n-k} \left( \Gamma_{k,q} - B_{k,q}\right), \qquad \max\{0,k-n+1\}\leq q <   k/2.
    \end{aligned}                                                                                                    
\end{equation}
Recall that 
\begin{equation}\label{eq:glob_DN}
 \glob(\Delta_{k,q})=\mu_{k,q} \qquad \text{and}\qquad \glob(N_{k,q})=0,
\end{equation}
where the valuations $\mu_{k,q}$ are the hermitian intrinsic volumes (see \cite{bernig_fu11}). 

The polynomials $p_k$ and $q_k$ are given explicitly by
\begin{equation} \label{eq:p_polynomial} p_k(s,t)= (-1)^k \sum_{i=0}^{\lfloor k/2 \rfloor} (-1)^i \binom{k-i}{i} s^i t^{k-2i}
\end{equation}
and 
\begin{equation} \label{eq:q_polynomial} q_k(s,t)= (-1)^{k+1} \sum_{i=0}^{\lfloor k/2 \rfloor} (-1)^i(i+1)  \binom{k+1-i}{i+1} s^i t^{k-2i}.
\end{equation}
They satisfy the relation
\begin{equation}\label{eq:fpq_relation}-(4s-t^2) q_{k-1} + tp_k = (k+1)^2 f_{k+1},\end{equation}
where $f_k$ denotes the Fu polynomial \cite{fu06}.

Recall also from \cite{fu06} that there are two special unitary valuations $s$ and $t$ which generate $ \Val^{U(n)}$ as an algebra. 
The module product on $\Area^{U(n)}$ satisfies the two fundamental relations
\begin{equation}\label{eq:BequalsG}
 p_n(\hat s, \hat t)* B_{2n-1,n-1} =q_{n-1}(\hat s, \hat t)* \Gamma_{2n-2,n-1}
\end{equation}
and
\begin{equation}\label{eq:Gequals0}
 p_n(\hat s, \hat t)* \Gamma_{2n-2,n-1}=0.
\end{equation}
Here $\hat s$ and $\hat t$ are the Fourier transforms of $s$ and $t$. Moreover, it was proved in \cite{wannerer13} that
\begin{equation}\label{eq:Gammasubmod}
 \textit{the span of the } \Gamma_{k,q} \textit{ coincides with the submodule of }\Area^{U(n)} \textit{ generated by }\Gamma_{2n-2,n-1}.
\end{equation}
and that
\begin{equation}\label{eq:Areamod}
 \textit{as a module } \Area^{U(n)} \textit{ is generated by }B_{2n-1,n-1} \textit{ and }\Gamma_{2n-2,n-1}.
\end{equation}

\subsection{The kinematic product in the unitary case} After the general considerations of the first subsection, we investigate now the case $\RR^{2n}=\CC^n$ and $G=U(n)$ in detail.  

In \cite{wannerer13} two special bases of $\Area^{U(n)}$ have been used:
The $B_{k,q}$-$\Gamma_{k,q}$ and the $\Delta_{k,q}$-$N_{k,q}$ basis. In the following we are going to use the corresponding dual bases for $\Area^{U(n)*}$ which, by \eqref{eq:DeltaN_basis},
 are related by
\begin{equation}\label{eq:B*equals}
 B_{k,q}^*=\frac{k-2q}{2n-k} \Delta_{k,q}^*  - \frac{2(n-k+q)}{2n-k} N_{k,q}^*, \qquad \max\{0,k-n\}\leq q <  k/2,
\end{equation}
and
\begin{equation}\label{eq:Gamma*equals}
 \Gamma_{k,q}^*=\left\{\begin{array}{ll} \frac{2(n-k+q)}{2n-k} (\Delta_{k,q}^* + N_{k,q}^*), &  \max\{0,k-n+1\}\leq q \leq   k/2 ;\\
					  \Delta_{k,q}^*, & 2q=k.
                       \end{array}\right.
\end{equation}
It is an immediate consequence of \eqref{eq:glob_DN} that the image of $\glob^*$ coincides with the span of the $\Delta_{k,q}^*$. Hence, by Corollary~\ref{cor:subalgebra}, the span of the
$\Delta_{k,q}^*$ is a subalgebra of $\Area^{U(n)*}$. 
We define  
$$\bar t := \glob^* (\PD (\hat t)) \qquad\text{and}\qquad     \bar s := \glob^* ( \PD (\hat s)).$$
For reasons which will become apparent later, we put
\begin{align*}
		    \bar v:= & \frac{2\omega_{2n-2}}{\omega_{2n-1}} B_{1,0}^*.
\end{align*}

From the definition of $\bar s$ and $\bar t$, Lemma~\ref{lem:kinprod_module}, and Propositions 4.7 and 4.8 of \cite{wannerer13}, we obtain the following formulas for multiplication by $\bar s$ and $\bar t$. 
\begin{lemma}\label{lem:sbartbar}
\begin{align} \bar t * B_{k,q}^*  & =  \frac{\omega_{2n-k}}{\pi \omega_{2n-k-1}} \left( (k-2q) B_{k+1,q+1}^*  + \frac{2(n-k+q)(k-2q)}{k-2q+1} B_{k+1,q}^*\right) \notag \\
	      \bar t * \Delta_{k,q}^* & = \frac{\omega_{2n-k}}{\pi \omega_{2n-k-1}}\left( (k-2q) \Delta_{k+1,q+1}^* + 2(n-k+q) \Delta_{k+1,q}^*\right) \notag \\ 
	      \bar t * N_{k,q}^* & = \frac{\omega_{2n-k}}{\pi \omega_{2n-k-1}} \frac{k-2q}{2n-k-1} \bigg( \Delta^*_{k+1,q+1} - \Delta^*_{k+1,q}        \label{eq:tbarN} \\
& \qquad \qquad \qquad + (2n-k) \left( N^*_{k+1,q+1} + \frac{2(n-k+q-1)}{k-2q+1} N_{k+1,q}^* \right)  \bigg) \notag  \\
	      \bar s* B_{k,q}^*  & = \frac{(k-2q)(k-2q-1)}{2\pi (2n-k)} B_{k+2,q+2}^* + \frac{2(n-k+q)(n-q)}{\pi (2n-k)} B_{k+2,q+1}^* \notag \\
	      \bar s * \Delta_{k,q}^* & =  \frac{(k-2q)(k-2q-1)}{2\pi (2n-k)} \Delta_{k+2,q+2}^* + \frac{2(n-k+q)(n-q)}{\pi (2n-k)} \Delta_{k+2,q+1}^* \notag \\
	      \bar s * N^*_{k,q}  & = \frac{(k-2q)(k-2q-1)}{2\pi (2n-k-2)} \left( N^*_{k+2,q+2}  + \frac{2}{2n-k} \Delta^*_{k+2,q+2} \right)  \notag \\ 
&  \qquad \qquad \qquad  + \frac{2(n-q)}{\pi (2n-k-2)}\left( (n-k+q-1)N^*_{k+2,q+1} - \frac{k-2q}{2n-k} \Delta^*_{k+2,q+1}\right) \notag
\end{align}

\end{lemma}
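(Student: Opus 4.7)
The plan is to deduce the lemma directly from Lemma~\ref{lem:kinprod_module} and the explicit module product formulas in \cite{wannerer13}. Since the kinematic product on $\Area^{U(n)*}$ is commutative by Proposition~\ref{prop:co-algebra}, Lemma~\ref{lem:kinprod_module} gives
$$\bar t * \Lambda \;=\; \Lambda * \bar t \;=\; m_{\hat t}^*(\Lambda), \qquad \bar s * \Lambda \;=\; m_{\hat s}^*(\Lambda),$$
for every $\Lambda\in\Area^{U(n)*}$, where $m_{\hat t}, m_{\hat s}\colon\Area^{U(n)}\to\Area^{U(n)}$ denote module multiplication by $\hat t$ and $\hat s$. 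Thus the lemma is nothing more than the transpose of the module action, which is computed explicitly in Propositions~4.7 and~4.8 of \cite{wannerer13}.

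Concretely, if $\{\Phi_i\}$ is any basis of $\Area^{U(n)}$ with dual basis $\{\Phi_i^*\}$ and $\hat t * \Phi_j = \sum_i c_{ij}\Phi_i$, then $m_{\hat t}^*(\Phi_i^*) = \sum_j c_{ij}\Phi_j^*$. In other words, the coefficient of $\Phi_j^*$ in $\bar t*\Phi_i^*$ equals the coefficient of $\Phi_i$ in $\hat t*\Phi_j$, and similarly for $\bar s$. The formulas for $\bar t*B_{k,q}^*$ and $\bar s*B_{k,q}^*$ are therefore obtained by reading off the $B_{k,q}$-coefficients of $\hat t * B_{k+1,\bullet}$, $\hat t*\Gamma_{k+1,\bullet}$, $\hat s*B_{k+2,\bullet}$, and $\hat s*\Gamma_{k+2,\bullet}$ from \cite{wannerer13}.

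For the remaining formulas, involving $\Delta^*_{k,q}$ and $N^*_{k,q}$, one substitutes the basis change \eqref{eq:B*equals} and \eqref{eq:Gamma*equals} into the $B^*$-$\Gamma^*$ formulas (or equivalently, transposes the module action written in the $\Delta$-$N$ basis via \eqref{eq:DeltaN_basis}). Here the following consistency check is useful: because $\glob$ is $\Val^{U(n)}$-equivariant and $\ker\glob = \operatorname{span}\{N_{k,q}\}$, the subspace spanned by the $N_{k,q}$ is stable under $m_{\hat t}$, which on the dual side is reflected by the absence of $N^*$-components in $\bar t*\Delta_{k,q}^*$. The complementary span of the $\Delta_{k,q}$, however, is \emph{not} stable, so $\hat t*\Delta_{k,q}$ acquires $N$-components; dualizing produces precisely the $\Delta^*_{k+1,q+1}-\Delta^*_{k+1,q}$ contribution in the formula for $\bar t*N_{k,q}^*$.

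The only real work is the bookkeeping. The main obstacle is verifying the coefficients match under the two basis changes simultaneously (input on the primal side, output on the dual side), taking care that the relation between the $B$-$\Gamma$ and $\Delta$-$N$ bases is piecewise defined on the boundary indices $2q=k$ and $q=\max\{0,k-n\}$. Once the correspondence is set up, all the stated coefficients emerge by direct substitution from the formulas in \cite{wannerer13}.
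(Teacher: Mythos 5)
Your proposal is correct and follows essentially the same route as the paper: the paper's proof is a one-line application of Lemma~\ref{lem:kinprod_module} together with Propositions~4.7 and 4.8 of \cite{wannerer13}, which is exactly your reduction to transposing the module action $m_{\hat t}$ and $m_{\hat s}$ and then changing bases via \eqref{eq:B*equals}--\eqref{eq:Gamma*equals}. You simply spell out the linear-algebra bookkeeping (dual-basis transpose, stability of $\operatorname{span}\{N_{k,q}\}=\ker\glob$ forcing the absence of $N^*$-terms in $\bar t*\Delta_{k,q}^*$) that the paper leaves implicit.
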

From the above we conclude that 
 $$\bar t = \frac{2 \omega_{2n-2}}{\omega_{2n-1}} \Delta^*_{1,0}=\frac{2 \omega_{2n-2}}{ \omega_{2n-1}}(B_{1,0}^* + \Gamma_{1,0}^*)\qquad \text{and}\qquad \bar s = \frac{n}{\pi} \Delta^*_{2,1}.$$

\begin{lemma}\label{lem:relations}
As elements of the algebra $\Area^{U(n)*}$ we have
$$p_n(\bar s,\bar t)-q_{n-1}(\bar s, \bar t)\bar v = 0 \qquad \text{and} \qquad p_n(\bar s,\bar t) \bar v=0.$$
\end{lemma}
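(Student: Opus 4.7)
The plan is to translate both identities into statements about the module action of $\Val^{U(n)}$ on $\Area^{U(n)}$ via Lemma~\ref{lem:kinprod_module} and Corollary~\ref{cor:subalgebra}, and then to exploit the module-theoretic description of $\Area^{U(n)}$ recalled in the previous subsection. Setting $c=2\omega_{2n-2}/\omega_{2n-1}$, Corollary~\ref{cor:subalgebra} identifies $p_n(\bar s,\bar t)$ and $q_{n-1}(\bar s,\bar t)$ with $\glob^*(\PD(p_n(\hat s,\hat t)))$ and $\glob^*(\PD(q_{n-1}(\hat s,\hat t)))$, so Lemma~\ref{lem:kinprod_module} gives, for every $\Psi\in\Area^{U(n)}$,
\begin{align*}
 p_n(\bar s,\bar t)(\Psi) &= \chi^*\glob\bigl(p_n(\hat s,\hat t)*\Psi\bigr),\\
 (q_{n-1}(\bar s,\bar t)\bar v)(\Psi) &= c\, B_{1,0}^*\bigl(q_{n-1}(\hat s,\hat t)*\Psi\bigr),\\
 (p_n(\bar s,\bar t)\bar v)(\Psi) &= c\, B_{1,0}^*\bigl(p_n(\hat s,\hat t)*\Psi\bigr).
\end{align*}

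For $p_n(\bar s,\bar t)\bar v=0$, I would use \eqref{eq:Areamod} to write $\Psi=\phi*B_{2n-1,n-1}+\psi*\Gamma_{2n-2,n-1}$ with $\phi,\psi\in\Val^{U(n)}$. The module relations \eqref{eq:BequalsG} and \eqref{eq:Gequals0} then give
\[ p_n(\hat s,\hat t)*\Psi \;=\; q_{n-1}(\hat s,\hat t)*(\phi*\Gamma_{2n-2,n-1}),\]
and by \eqref{eq:Gammasubmod} this lies in the span of the $\Gamma_{k,q}$. Since $B_{1,0}^*$ annihilates every $\Gamma_{k,q}$ by definition of the $B$-$\Gamma$ dual basis, this forces $(p_n(\bar s,\bar t)\bar v)(\Psi)=0$.

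For $p_n(\bar s,\bar t)=q_{n-1}(\bar s,\bar t)\bar v$, the same reduction makes the $\psi*\Gamma_{2n-2,n-1}$ contribution to $(q_{n-1}(\bar s,\bar t)\bar v)(\Psi)$ vanish (because $q_{n-1}(\hat s,\hat t)*\psi*\Gamma_{2n-2,n-1}$ again lies in the span of the $\Gamma_{k,q}$ by \eqref{eq:Gammasubmod}). Using that $\glob$ is a $\Val^{U(n)}$-module homomorphism (which follows from the formula $\mu_C^{U(n)}*\Psi(K,U)=\int\Psi(K+gC,U)\,dg$ together with Proposition~\ref{prop:poincare_evaluation}) and $\glob(\Gamma_{2n-2,n-1})=\mu_{2n-2,n-1}$, combined with \eqref{eq:poincare_pairing_even}, the first identity reduces to
\[ \PD(\xi)(\mu_{2n-2,n-1}) \;=\; c\, B_{1,0}^*\bigl(\xi*B_{2n-1,n-1}\bigr),\qquad \xi=q_{n-1}(\hat s,\hat t)*\phi,\ \phi\in\Val^{U(n)}.\]

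The hard part will be to verify this reduced identity. Both sides are linear in $\xi$, and by the degree law $\deg(\phi*\Psi)=\deg\phi+\deg\Psi-2n$ applied with $\deg B_{2n-1,n-1}=2n-1$ and $\deg\mu_{2n-2,n-1}=2n-2$, they both vanish unless $\deg\xi=2$. Since the lowest-degree relation in $\Val^{U(n)}$ is $f_{n+1}$ of degree $n+1\geq 3$, the space $\Val^{U(n)}_2$ is two-dimensional, spanned by $s$ and $t^2$. The identity thus reduces to a finite explicit check, which I would carry out using the primal version of Lemma~\ref{lem:sbartbar} (that is, the explicit module action of $\hat s$ and $\hat t$ on the generators $B_{2n-1,n-1}$ and $\Gamma_{2n-2,n-1}$ recorded in \cite{wannerer13}) together with the known Poincar\'e pairings among the hermitian intrinsic volumes $\mu_{k,q}$.
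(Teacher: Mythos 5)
Your argument for $p_n(\bar s,\bar t)\bar v=0$ coincides with the paper's. For $p_n(\bar s,\bar t)=q_{n-1}(\bar s,\bar t)\bar v$ your reduction is sound up to a point, but the final plan has a gap. You correctly show that the lemma is equivalent to the reduced identity $\PD(\xi)(\mu_{2n-2,n-1})=c\,B_{1,0}^*(\xi*B_{2n-1,n-1})$ holding for all $\xi$ of the form $q_{n-1}(\hat s,\hat t)*\phi$ with $\phi\in\Val^{U(n)}$, and the degree count forcing $\deg\xi=2$ is right. But the set of such $\xi$ is the image of $q_{n-1}(\hat s,\hat t)*\colon\Val_{n+1}^{U(n)}\to\Val_2^{U(n)}$, and this image is in general a \emph{proper} subspace of $\Val_2^{U(n)}$. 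For instance, when $n=2$ one has $q_1(\hat s,\hat t)=2\hat t$, $\dim\Val_3^{U(2)}=1$, and $\hat t*(\tfrac{1}{3}\hat t^{*2}-\hat s)=0$ since $f_3=t(t^2/3-s)$, so the image is at most one-dimensional. Verifying the reduced identity on all of $\Val_2^{U(n)}$ (i.e., for $\xi=s$ and $\xi=t^2$) is therefore a strictly stronger statement than the lemma; nothing you have written guarantees it, and if it fails on the complement of the image your "finite explicit check" will return a nonzero residual and leave you stuck, even though the lemma is true. You would need either to prove the stronger statement (which would require a separate argument) or to parametrize the image subspace explicitly, which is considerably more delicate.

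The paper circumvents this by a structural argument that does not require knowing that image. It shows $p_n(\bar s,\bar t)$ lies simultaneously in the span of the $B_{k,q}^*$ (from \eqref{eq:Gequals0} and \eqref{eq:Gammasubmod}) and in the span of the $\Delta_{k,q}^*$ (the image of $\glob^*$), so by \eqref{eq:B*equals} it must be a multiple of $B_{n,0}^*=\Delta_{n,0}^*$. It then shows $q_{n-1}(\bar s,\bar t)\bar v$ is also a multiple of $B_{n,0}^*$: this is the genuinely hard step, using the polynomial identity \eqref{eq:fpq_relation} with $u=4s-t^2$ to establish that $q_{n-1}(\bar s,\bar t)\bar v$ annihilates $\hat u*\Area^{U(n)}$, and Corollary~3.8 of \cite{bernig_fu11} to show that every $B_{n,q_0}$ with $q_0>0$ is, modulo the $\Gamma$'s, a combination of $\hat u*B_{n+2,q}$. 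With both sides pinned to a one-dimensional space, a single pairing against $B_{n,0}$ (Lemma~\ref{lem:tnBn0}) closes the argument. Your proposal offers no replacement for this step, which is exactly what resolves the ambiguity your two-dimensional check cannot.
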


\begin{proof}
 We prove $p_n(\bar s,\bar t) \bar v=0$ first. By \eqref{eq:Areamod}, any unitary area measure may be expressed as $ \phi * B_{2n-1,n-1} + \psi * \Gamma_{2n-2,n-1}$ for some $\phi,\psi \in \Val^{U(n)}$. 
Using \eqref{eq:kinprod}, \eqref{eq:BequalsG}, \eqref{eq:Gequals0},
 and \eqref{eq:Gammasubmod}, we obtain 
\begin{align*} \left\langle p_n(\bar s,\bar t) \bar v, \phi * B_{2n-1,n-1} + \psi * \Gamma_{2n-2,n-1}\right\rangle &= \left\langle \bar v, \phi* q_{n-1}(\hat s, \hat t) * \Gamma_{2n-2,n-1} \right\rangle \\
	  &= 0
\end{align*}

To prove the second identity, first observe that $p_n(\bar s,\bar t)$ is a linear combination of certain $B_{k,q}^*$. This follows immediately from \eqref{eq:Gequals0} and \eqref{eq:Gammasubmod}.
 At the same time $p_n(\bar s,\bar t)$ is also an element of span of the $\Delta_{k,q}^*$. Hence \eqref{eq:B*equals} implies that 
$p_n(\bar s,\bar t)$ is a multiple of $B_{n,0}^*$. 

Put $u=4s-t^2$. From \eqref{eq:fpq_relation} we obtain 
\begin{equation}\label{eq:vbaru}\left\langle q_{n-1}(\bar s,\bar t)\bar v,\hat u * \Psi\right\rangle= \left\langle -p_n(\bar s,\bar t) \bar v, \hat 	t*\Psi\right\rangle=0\end{equation}
for every unitary area measure $\Psi$. 

Next we claim that 
\begin{align}\label{eq:uBeta}
 &\textit{if }q_0> 0,\textit{ then --- modulo the subspace spanned by the } \Gamma_{k,q}\text{ --- }B_{n,q_0} \textit{ can be} \\ 
 & \textit{expressed as a linear combination of the area measures }\hat u * B_{n+2,q}. \notag
\end{align}
Indeed, by Corollary 3.8 of \cite{bernig_fu11}, $\mu_{n,q_0}$ can be expressed as a linear combination of the valuations $\hat u *\mu_{n+2,q}$ provided $q_0>0$. Since the globalization map is injective when restricted to the 
subspace spanned by the $\Delta_{k,q}$, we obtain that  $\Delta_{n,q_0}$ can be expressed as a linear combination of the area measures $\hat u *\Delta_{n+2,q}$ provided $q_0>0$. The claim follows now from \eqref{eq:Gammasubmod}.

Lemma~\ref{lem:sbartbar} implies that $q_{n-1}(\bar s,\bar t)\bar v$ is a linear combination of certain $B_{n,q}$. Together with \eqref{eq:vbaru} and \eqref{eq:uBeta}, 
we conclude that $q_{n-1}(\bar s,\bar t)\bar v$ is in fact a multiple of $B_{n,0}^*$.
Using $\hat s * B_{n,0}=0$, the explicit formulas for $p_n$ and $q_{n-1}$ given in \eqref{eq:p_polynomial} and \eqref{eq:q_polynomial}, and Lemma~\ref{lem:tnBn0} below, we compute
$$\left\langle p_n(\hat s,\hat t),B_{n,0}\right\rangle =(-1)^n \left\langle \Gamma_{0,0}^* ,   \hat t^{n} * B_{n,0}\right\rangle = (-1)^n\frac{2^n}{\omega_n}$$
and
$$\left\langle q_{n-1}(\hat s,\hat t)\bar v,B_{n,0}\right\rangle =(-1)^n \frac{2n \omega_{2n-2}}{\omega_{2n-1}} \left\langle B_{1,0}^* ,   \hat t^{n-1} * B_{n,0}\right\rangle = (-1)^n\frac{2^n}{\omega_n}.$$
\end{proof}

\begin{lemma}\label{lem:tnBn0}
 For $n\geq 2$, 
$$\hat t^{n-1} * B_{n,0}= \frac{2^{n-1}}{n} \frac{\omega_{2n-1}}{\omega_{2n-2} \omega_n} (B_{1,0} + (n-1) \Gamma_{1,0})$$
and 
$$\hat t^{n} *B_{n,0}=\frac{2^n}{\omega_n}\Gamma_{0,0}.$$
\end{lemma}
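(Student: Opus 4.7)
The plan is to set up an induction on the number of $\hat t$ factors using two clean closed-form expressions for the module product on the extreme elements $B_{k,0}$ and $\Gamma_{k,0}$.

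The first step is to establish the $q=0$ specializations
\begin{align*}
\hat t * B_{k,0} &= \frac{2(n-k+1)\omega_{2n-k+1}}{\pi k\,\omega_{2n-k}}\bigl[(k-1)B_{k-1,0}+\Gamma_{k-1,0}\bigr],\\
\hat t * \Gamma_{k,0} &= \frac{2(n-k+1)\omega_{2n-k+1}}{\pi\,\omega_{2n-k}}\,\Gamma_{k-1,0},
\end{align*}
valid for $1\le k\le n$. These can be read off from Propositions~4.7 and~4.8 of \cite{wannerer13}, or derived by transposing the dual formulas of Lemma~\ref{lem:sbartbar} via Lemma~\ref{lem:kinprod_module}. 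The striking fact that $\hat t*\Gamma_{k,0}$ lies purely in the $\Gamma$-direction is what will make the induction terminate cleanly.

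Granting these, I prove by induction on $j\in\{0,1,\ldots,n-1\}$ that
\[
\hat t^{\,j}*B_{n,0}=c_j\bigl[(n-j)B_{n-j,0}+j\,\Gamma_{n-j,0}\bigr],\qquad c_j=\frac{2^{j}j!\,\omega_{n+j}}{n\,\pi^{j}\,\omega_n}.
\]
The inductive step is a direct computation: the contribution $(n-j+1)\,\hat t*B_{n-j+1,0}$ yields $\frac{2j\omega_{n+j}}{\pi\omega_{n+j-1}}[(n-j)B_{n-j,0}+\Gamma_{n-j,0}]$, while $(j-1)\,\hat t*\Gamma_{n-j+1,0}$ contributes $(j-1)\frac{2j\omega_{n+j}}{\pi\omega_{n+j-1}}\Gamma_{n-j,0}$, and the $\Gamma$-coefficients combine to exactly $j\,\Gamma_{n-j,0}$. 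Specializing at $j=n-1$ and using $\omega_{2n-2}=\pi^{n-1}/(n-1)!$ reduces $c_{n-1}$ to $\frac{2^{n-1}}{n}\,\frac{\omega_{2n-1}}{\omega_{2n-2}\omega_n}$, establishing the first formula.

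For the second formula, apply $\hat t$ once more to $c_{n-1}[B_{1,0}+(n-1)\Gamma_{1,0}]$. The $k=1$ case of the above gives $\hat t*B_{1,0}=\hat t*\Gamma_{1,0}=\frac{2n\omega_{2n}}{\pi\omega_{2n-1}}\Gamma_{0,0}$, so $\hat t^{\,n}*B_{n,0}=c_{n-1}\cdot n\cdot\frac{2n\omega_{2n}}{\pi\omega_{2n-1}}\Gamma_{0,0}$, which simplifies to $\frac{2^{n}}{\omega_n}\Gamma_{0,0}$ via the identity $n\omega_{2n}=\pi\omega_{2n-2}$. The main obstacle is the initial derivation of the clean $q=0$ identities, which requires careful bookkeeping in the change of basis between $\{B,\Gamma\}$ and $\{\Delta,N\}$ and hinges on the fact that $\hat t*N_{k,0}$ has no $\Delta$-component (since $N_{k,0}\in\ker\glob$); once these are in hand, the induction and the final arithmetic simplifications are routine.
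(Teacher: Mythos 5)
Your proof is correct and follows essentially the same route as the paper: both start from the $q=0$ specializations of the module product formulas in Propositions~4.7 and~4.8 of \cite{wannerer13} (which are exactly the entries of the paper's $2\times 2$ recursion matrix) and then run the same induction on the number of $\hat t$ factors. The only cosmetic difference is that you unwind the closed form $c_j[(n-j)B_{n-j,0}+j\,\Gamma_{n-j,0}]$ at each step, while the paper packages the same induction as the matrix identity $\begin{pmatrix}1&0\\1&2\end{pmatrix}\cdots\begin{pmatrix}n-1&0\\1&n\end{pmatrix}\begin{pmatrix}1\\0\end{pmatrix}=(n-1)!\begin{pmatrix}1\\n-1\end{pmatrix}$.
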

\begin{proof}
By Propositions~4.7 and 4.8 of \cite{wannerer13}, there exist numbers $c_i, d_i$ such that $\hat t ^i *B_{n,0} = c_i B_{n-i,0} + d_i \Gamma_{n-i,0}$ for $i=0,\ldots, n$. More precisely,
$$\begin{pmatrix} c_{i+1}\\ d_{i+1}\end{pmatrix} = \frac{2(i+1) \omega_{n+i+1}}{(n-i) \pi \omega_{n+i} } \begin{pmatrix} n-i-1 & 0 \\ 1 & n-i \end{pmatrix} \begin{pmatrix} c_{i}\\ d_{i}\end{pmatrix},\qquad i=0,\ldots, n-1$$
with $c_0=1$ and $d_0=0$. Since
$$ \begin{pmatrix} 1 & 0 \\ 1 & 2 \end{pmatrix} \cdots  \begin{pmatrix} n-1 & 0 \\ 1 & n \end{pmatrix} \begin{pmatrix} 1\\ 0\end{pmatrix}= (n-1)!\begin{pmatrix}1 \\ n-1\end{pmatrix}\qquad \text{for }n\geq 2,$$
the lemma follows.
\end{proof}

\begin{proposition}\label{prop:generators}

For every  $\Lambda\in \Area^{U(n)*}$ there exist polynomials $\phi, \psi$ in  $\bar s$ and $\bar t$ such that
\begin{equation}\label{eq:stv}\Lambda= \phi(\bar s,\bar t) + \psi(\bar s,\bar t) \bar v.\end{equation}
\end{proposition}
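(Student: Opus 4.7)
The plan is to decompose $\Area^{U(n)*}$ as the vector space sum $V + V \bar v$, where $V = \RR[\bar s, \bar t]$ denotes the subalgebra of $\Area^{U(n)*}$ generated by $\bar s$ and $\bar t$, by first identifying $V$ with the image of $\glob^*$ and then showing that $V \cdot \bar v$ covers the complementary subspace $\operatorname{span}\{N^*_{k,q}\}$ modulo $V$.

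First I would verify that $V$ is precisely $\operatorname{Im}(\glob^*) = \operatorname{span}\{\Delta^*_{k,q}\}$. By Fu \cite{fu06}, $\Val^{U(n)}$ is generated as an algebra by $s$ and $t$, and since the Alesker Fourier transform intertwines product with convolution, $\hat s, \hat t$ also generate $(\Val^{U(n)}, *)$. By Corollary~\ref{cor:subalgebra}, the composition $\glob^* \circ \PD \colon (\Val^{U(n)}, *) \to (\Area^{U(n)*}, *)$ is an algebra homomorphism, and since $\PD$ is a linear bijection, its image coincides with $\glob^*(\Val^{U(n)*})$, which therefore equals the subalgebra generated by $\bar s$ and $\bar t$. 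Dualizing \eqref{eq:glob_DN}, which identifies $\ker \glob$ with $\operatorname{span}\{N_{k,q}\}$, yields $\glob^*(\Val^{U(n)*}) = (\ker \glob)^{\perp} = \operatorname{span}\{\Delta^*_{k,q}\}$. Hence every $\Lambda \in \Area^{U(n)*}$ decomposes uniquely as $\Lambda = \phi(\bar s, \bar t) + \Lambda'$ with $\Lambda' \in \operatorname{span}\{N^*_{k,q}\}$, and the proposition reduces to showing that each such $\Lambda'$ lies in $\RR[\bar s, \bar t]\cdot \bar v$ modulo $V$.

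From \eqref{eq:B*equals} we see that $\bar v \equiv c_0 N^*_{1,0}$ modulo $\operatorname{span}\{\Delta^*_{k,q}\}$ for a nonzero scalar $c_0$ when $n \geq 2$. By Lemma~\ref{lem:sbartbar}, multiplication by $\bar t$ and $\bar s$ sends $\operatorname{span}\{N^*_{k,q}\}$ into itself modulo $\operatorname{span}\{\Delta^*_{k,q}\}$; the $\bar t$-action is bidiagonal, sending (up to an explicit nonzero scalar) $N^*_{k,q}$ to $N^*_{k+1,q+1} + d_{k,q} N^*_{k+1,q}$ with $d_{k,q} = \frac{2(n-k+q-1)}{k-2q+1}$, and the $\bar s$-action raises $k$ by $2$ with an analogous explicit pattern (where out-of-range basis symbols are understood to vanish). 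I would argue by induction on $k$ that each level $V_k := \operatorname{span}\{N^*_{k,q}\}$ lies in $\RR[\bar s, \bar t] \cdot \bar v$ modulo $V$: the base case $V_1$ is realized by $\bar v$ itself, and at the inductive step the image of $\bar t \cdot V_k$ combined with $\bar s \cdot V_{k-1}$ is shown to span $V_{k+1}$.

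The main obstacle is this inductive step: the surjectivity of $\bar t \cdot V_k + \bar s \cdot V_{k-1} \to V_{k+1}$ (mod $V$) requires a rank analysis of the explicit bidiagonal and tridiagonal matrices whose entries are read off from Lemma~\ref{lem:sbartbar}. The argument must distinguish between the ``small $k$'' regime ($k+1 \leq n-1$), where $\dim V_{k+1}$ can exceed $\dim V_k$ so that the image of $\bar t$ alone is insufficient and the contribution of $\bar s \cdot V_{k-1}$ is essential, and the ``large $k$'' regime ($k+1 \geq n$), where $\dim V_{k+1}$ shrinks and the induction simplifies. In each regime the explicit nonvanishing of the coefficients $d_{k,q}$ at the relevant indices (together with the analogous $\bar s$-coefficients) ensures that the combined system has full rank, completing the induction.
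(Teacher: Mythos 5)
Your proposal matches the paper's proof in both structure and content: the paper likewise identifies $\RR[\bar s,\bar t]$ with $\operatorname{Im}(\glob^*)=\operatorname{span}\{\Delta^*_{k,q}\}$ via Corollary~\ref{cor:subalgebra}, reduces to showing each $N^*_{k,q}$ lies in $\RR[\bar s,\bar t]\bar v$ modulo that subalgebra, and proceeds by induction on $k$ using the bidiagonal $\bar t$-action together with the $\bar s$-action on the top element $N^*_{2m+1,m}$ when passing from even $k$ to odd $k+1$ (the ``small $k$'' regime where the dimension grows). The rank analysis you defer is handled in the paper simply by observing that the system is triangular, so only the nonvanishing of the leading coefficients in Lemma~\ref{lem:sbartbar} --- exactly the ones you single out --- needs checking.
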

\begin{proof}
 Since the span of the $\Delta_{k,q}^* $ coincides with the image of $\glob^*$, it follows from Corollary~\ref{cor:subalgebra} that for every $\Delta_{k,q}^*$ 
there exists a polynomial $\phi$ in $\bar s$ and $\bar t$ such that
$\Delta_{k,q}^* = \phi(\bar s,\bar t)$. 
Since the case $n=2$ follows immediately from \eqref{eq:B*equals}, we assume from now on that $n\geq 3$. Suppose that $k=2m+1$ is odd and that $1\leq k< 2n-3 $ . Then $N_{2m+1,m}^* $ exists and, 
up to linear combinations of $\Delta_{k+1,q}^*$, by Lemma~\ref{lem:sbartbar}
$$\bar t * N_{2m+1,m}\equiv c N_{2m+2,m}$$
for some nonzero constant $c$. 
Hence if \eqref{eq:stv} holds for every $N_{k,q'}$, by \eqref{eq:tbarN},  \eqref{eq:stv} holds for every $N_{k+1,q}$ as well. 
Suppose now that $k=2m+2$ is even and that $2\leq k< 2n-3$. Then $N_{2m+1,m}$ and $N_{2m+3, m+1}$ exist and up to certain $\Delta_{k+1,q}^*$ 
$$\bar s * N_{2m+1,m}\equiv c N_{2m+3,1}$$
for some nonzero constant $c$. Hence if \eqref{eq:stv} holds for $N_{2m+1,m}^*$ and every $N_{k,q'}^*$, then, by \eqref{eq:tbarN}, \eqref{eq:stv} holds for all $N_{k+1,q}^* $ as well. This proves the proposition.
\end{proof}

Later, when we derive explicit kinematic formulas from our main theorem, the following two results will be useful.
\begin{lemma}\label{lem:Bideal}
For every $B_{k,q}^*$ there exists a polynomial $\psi=\psi(\bar s,\bar t)$ in $\bar s$ and $\bar t$ such that 
$$\psi(\bar s,\bar t) \bar v = B_{k,q}^*.$$
\end{lemma}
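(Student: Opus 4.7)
The plan is to prove the lemma by induction on $k$. The base case $k = 1$ is immediate from the definition $\bar v = \frac{2\omega_{2n-2}}{\omega_{2n-1}} B_{1,0}^*$. For the inductive step, assume that every $B_{k',q'}^*$ with $k' \leq k$ is already of the form $\psi_{k',q'}(\bar s, \bar t)\,\bar v$. Lemma~\ref{lem:sbartbar} provides the identities
\[
\bar t * B_{k, q}^* = a_q\, B_{k+1, q+1}^* + b_q\, B_{k+1, q}^*, \qquad \bar s * B_{k-1, q}^* = c_q\, B_{k+1, q+2}^* + d_q\, B_{k+1, q+1}^*,
\]
whose left-hand sides lie in $\bar v \cdot \RR[\bar s, \bar t]$ by the inductive hypothesis, adopting the convention that $B_{k+1, q'}^* = 0$ whenever $(k+1, q')$ lies outside the valid range. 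I would view these as a linear system in the unknowns $B_{k+1, q'}^*$ and solve it by peeling off one extremal index at a time.

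The argument splits into two cases. If $k+1 > n$, the smallest valid index for $B_{k, \cdot}$ is $q = k-n$, at which $n-k+q = 0$ forces $b_{k-n} = 0$; thus $\bar t * B_{k, k-n}^*$ reduces to a nonzero scalar multiple of $B_{k+1, k+1-n}^*$, which is the smallest valid index for $B_{k+1, \cdot}$. The remaining $B_{k+1, q'}^*$ with larger $q'$ are then obtained by ascending $q$: having $B_{k+1, q}^*$, one solves for $B_{k+1, q+1}^*$ from the relation for $\bar t * B_{k, q}^*$ using the pivot $a_q \neq 0$. If instead $k+1 \leq n$, I would begin at the largest valid index. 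When $k$ is odd, the invalid $B_{k+1, (k+1)/2}^*$ vanishes in $\bar t * B_{k, (k-1)/2}^*$, isolating $B_{k+1, (k-1)/2}^*$. When $k$ is even, the coefficient $c_{(k-2)/2}$ vanishes in $\bar s * B_{k-1, (k-2)/2}^*$ because the factor $(k-1) - 2\cdot(k-2)/2 - 1$ is zero, again isolating the extremal $B_{k+1, k/2}^*$. From this starting point I then descend $q$ by solving $\bar t * B_{k, q}^* = a_q B_{k+1, q+1}^* + b_q B_{k+1, q}^*$ for $B_{k+1, q}^*$, using the pivot $b_q \neq 0$ in the interior of the range (which holds when $k+1 \leq n$).

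The main difficulty is the bookkeeping: I would need to check at each boundary that either some coefficient in Lemma~\ref{lem:sbartbar} vanishes or an off-range $B_{k+1, q'}^*$ silently becomes zero, so that an extremal unknown is isolated, and that the remaining pivots stay nonzero throughout the interior of the valid range. These boundary vanishings are precisely what allow the induction to close without circular dependence.
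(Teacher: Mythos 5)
Your proposal is correct, but it follows a genuinely different route from the paper. The paper's proof is a two-line deduction from Proposition~\ref{prop:generators}: write $B_{k,q}^* = \phi(\bar s,\bar t) + \psi(\bar s,\bar t)\bar v$, observe via Lemma~\ref{lem:sbartbar} that $\phi(\bar s,\bar t)$ is supported on the $\Delta^*$'s while $\psi(\bar s,\bar t)\bar v$ is supported on the $B^*$'s, so $\phi(\bar s,\bar t)$ can only be a multiple of $\Delta_{k,k-n}^* = B_{k,k-n}^*$ (possible only when $k \geq n$), and then invoke the computation from the proof of Lemma~\ref{lem:relations} showing $q_{n-1}(\bar s,\bar t)\bar v$ is a nonzero multiple of $B_{n,0}^*$ so that $\Delta_{k,k-n}^* = \bar t^{k-n} * q_{n-1}(\bar s,\bar t)\bar v$ lies in the ideal $(\bar v)$ anyway. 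Your argument instead runs a direct strong induction on $k$, triangulating the linear system supplied by $\bar t * B_{k,\cdot}^*$ and $\bar s * B_{k-1,\cdot}^*$ in the $B_{k+1,\cdot}^*$ unknowns by peeling off an extremal index and ascending or descending in $q$. This is structurally closer to the paper's proof of Proposition~\ref{prop:generators} itself (which carries out the analogous triangulation on the $N_{k,q}^*$'s), except you track the $B^*$'s directly. What you gain is self-containment: you do not need Proposition~\ref{prop:generators}, the Fu relations $p_n, q_{n-1}$, or Lemma~\ref{lem:tnBn0}. What you pay is the index bookkeeping you acknowledge — in particular, one must verify that the convention of setting out-of-range $B_{k+1,q'}^*$ to zero is consistent with the formulas in Lemma~\ref{lem:sbartbar}, which holds because at the lower boundary $q' = k-n$ the coefficient $b_q$ carries a factor $n-k+q$ that vanishes, and at the upper boundary $2q' = k+1$ the identity \eqref{eq:B*equals} has vanishing $\Delta^*$ coefficient and an out-of-range $N^*$. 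The two proofs are of comparable total effort once the shared infrastructure is counted; the paper's is the right choice there because Proposition~\ref{prop:generators} is needed independently.
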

\begin{proof}
 By Proposition~\ref{prop:generators} there exist polynomials $\phi,\psi$ in $\bar s$ and $\bar t$ such that $\phi(\bar s,\bar t) + \psi(\bar s,\bar t)\bar v= B_{k,q}^*$. Since $\phi(\bar s,\bar t)$ is a linear combination of 
certain $\Delta_{k,q'}^*$ measures and $\psi(\bar s,\bar t)\bar v$ is a linear combination of certain $B_{k,q'}^*$  by Lemma~\ref{lem:sbartbar}, we conclude that if $\phi(\bar s, \bar t)\neq 0$, then necessarily $k\geq n$ and 
$\phi(\bar s,\bar t)$ is a multiple of $\Delta_{k,k-n}^*=B_{k,k-n}^*$. 
In the proof of Lemma~\ref{lem:relations}, we have shown that $q_{n-1}(\bar s,\bar t) \bar v$ is a nonzero multiple of $B_{n,0}^*=\Delta^*_{n,0}$ and hence $\bar t^{k-n} * q_{n-1}(\bar s,\bar t)\bar v$ is a nonzero multiple of
$B_{k,k-n}^*=\Delta_{k,k-n}^*$. Hence $B_{k,q}^*=\psi'(\bar s,\bar t)\bar v$ for some polynomial $\psi'$. 
\end{proof}

\begin{proposition}\label{prop:Delta_st}
For every $\Delta_{k,q}^*\in \Area^{U(n)*}$ we have
$$ \Delta_{k,q}^* =\frac{ \omega_{2n-k}(k-2q)!(n-k+q)!}{\pi^{n-k}2^{k-2q} n!} \sum_{i=q}^{\lfloor \frac{k}{2} \rfloor} \frac{(-1)^{i+q}}{(i-q)!} \frac{(n-i)!}{(k-2i)!  } \bar t ^{k-2i} \bar s^i. $$
\end{proposition}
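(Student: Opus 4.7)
My plan is to prove the formula by induction, combining a diagonal base case with a nested descending induction on $q$, using the $\bar t$- and $\bar s$-recursions from Lemma~\ref{lem:sbartbar}. Let $F_{k,q}\in\RR[\bar s,\bar t]$ denote the right-hand side of the formula.

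First I handle the diagonal case $k=2q$: the sum collapses to the single term $i=q$, and the assertion reduces to $\bar s^q=\tfrac{n!}{\pi^q(n-q)!}\Delta_{2q,q}^*$. I prove this by a sub-induction on $q$. When $k=2q$ the coefficient $\tfrac{(k-2q)(k-2q-1)}{2\pi(2n-k)}$ of $\Delta_{k+2,q+2}^*$ in $\bar s\cdot\Delta_{k,q}^*$ vanishes, so the $\bar s$-recursion of Lemma~\ref{lem:sbartbar} becomes $\bar s\cdot\Delta_{2q,q}^*=\tfrac{n-q}{\pi}\Delta_{2q+2,q+1}^*$; combined with the base case $\bar s=\tfrac{n}{\pi}\Delta_{2,1}^*$ this yields the claim.

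Next I argue by induction on $k$. Assuming the formula for all valid indices of level less than $k$, I perform a descending induction on $q$ from $q=\lfloor k/2\rfloor$ down to $\max(0,k-n)$ at level $k$. The starting case of the inner induction is already handled (by the diagonal step if $k$ is even, or, when $k=2q+1$, by the fact that the coefficient $(k-1-2q)$ of $\Delta_{k,q+1}^*$ in $\bar t\cdot\Delta_{k-1,q}^*$ vanishes, so that $\Delta_{k,q}^*$ is expressed directly from the diagonal formula for $\Delta_{k-1,q}^*$). For smaller $q$ I invert
$$\bar t\cdot\Delta_{k-1,q}^*=\tfrac{\omega_{2n-k+1}(k-1-2q)}{\pi\omega_{2n-k}}\Delta_{k,q+1}^*+\tfrac{2\omega_{2n-k+1}(n-k+1+q)}{\pi\omega_{2n-k}}\Delta_{k,q}^*,$$
using the outer inductive hypothesis for $\Delta_{k-1,q}^*=F_{k-1,q}$ and the inner descending hypothesis for $\Delta_{k,q+1}^*=F_{k,q+1}$.

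The substantive task is then a combinatorial verification that the polynomial obtained from this inversion coincides with $F_{k,q}$. Expanding both sides in the monomials $\bar t^{k-2i}\bar s^i$ and cancelling $\omega$-ratios, one obtains an identity between rational combinations of $\tfrac{(-1)^{i+q}(n-i)!}{(i-q)!(k-2i)!}$, which follows from a direct Pascal-type manipulation. The outer normalization $\tfrac{\omega_{2n-k}(k-2q)!(n-k+q)!}{\pi^{n-k}2^{k-2q}n!}$ is designed precisely so that $\omega$- and factorial-ratios balance across the shifts $k\mapsto k-1$ and $q\mapsto q$ or $q+1$. I expect this combinatorial bookkeeping to be the main obstacle: it is elementary but requires careful tracking of factorials, signs, and $\omega_j$-ratios.
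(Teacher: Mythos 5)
Your proposal is correct, and it is genuinely different from the paper's proof. The paper first observes that $\Delta_{k,q}^* = \glob^*(\mu_{k,q}^*)$ while $\bar t^{k-2i}\bar s^i = \glob^*(\PD(\hat t^{k-2i}*\hat s^i))$ by Corollary~\ref{cor:subalgebra}, so by injectivity of $\glob^*$ the claim reduces to an identity in $\Val^{U(n)*}$; it then verifies this identity by pairing against the spanning family $\hat t^{2n-k-2j}*\hat u^j$, using the known evaluations from \cite{bernig_fu11} and \cite{fu06}, and ultimately reducing to the binomial identity \eqref{eq:combinat2}, which it proves via Zeilberger's algorithm. Your route instead stays entirely inside $\Area^{U(n)*}$, using only the multiplication rules of Lemma~\ref{lem:sbartbar}: the diagonal sub-induction via the $\bar s$-recursion, the odd-$k$ starting case via the vanishing coefficient, and the inner descending induction via inversion of the $\bar t$-recursion (valid since the coefficient $n-k+1+q$ is nonzero for $q\geq \max(0,k-n)$). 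I checked the combinatorial step you flagged as the main risk: after cancelling the common factor $\tfrac{2\omega_{2n-k}(n-k+1+q)!(-1)^{i+q}(n-i)!}{\pi^{n-k}2^{k-2q}n!}$, the coefficient identity collapses to $k-2q = (k-2i)+2(i-q)$, which is trivially true, and the boundary term at $i=\lfloor k/2\rfloor$ (when $k$ is even) also checks out. So your argument is not only correct but in fact considerably more elementary: it avoids Poincar\'e pairing evaluations and the Zeilberger lemma entirely, at the cost of a somewhat bulkier induction scheme. What the paper's proof buys in exchange is conceptual economy (the formula is read off directly from the known structure of the scalar valuation algebra and its Poincar\'e pairing) and a self-contained combinatorial lemma \eqref{eq:combinat2} that gets reused.
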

\begin{proof}
By Corollary~\ref{cor:subalgebra} and since $\glob^*$ is injective, it is sufficient to check that 
\begin{equation}\label{eq:mustar_poincare}\left\langle \mu_{k,q}^*, \phi\right\rangle =
\frac{ \omega_{2n-k}(k-2q)!(n-k+q)!}{\pi^{n-k}2^{k-2q} n!} \sum_{i=q}^{\lfloor \frac{k}{2} \rfloor} \frac{(-1)^{i+q}}{(i-q)!} \frac{(n-i)!}{(k-2i)!  } (\hat t ^{k-2i} *\hat s^i  ,\phi)\end{equation}
for every $\phi\in\Val^{U(n)}$. 

For 
$$\phi= \hat t^{2n-k-2j} * \hat u^j,\qquad u=4s-t,\qquad 0\leq 2j\leq 2n-k ,$$ 
the left-hand side of \eqref{eq:mustar_poincare} evaluates to
\begin{equation}\label{eq:mustar_LHS} \left\langle \mu_{2n-k,n-k+q}^*,  t^{2n-k-2j}u^j \right\rangle = \frac{\omega_{2n-k} (2n-k-2j)! (2j)!}{\pi^{2n-k}} \binom{n-k+q}{j} 
\end{equation}
by Proposition~3.7 of \cite{bernig_fu11}.
Since 
$$t^{2n-2i} s^i (B(\CC^n)) = \binom{2n-2i}{n-i},$$
see \cite{fu06}, and $u=4s-t^2$, one shows by induction on $j$ that 
\begin{equation}\label{eq:poincare_stu}t^{2n-2i-2j} s^i u^j(B(\CC^n)) =\binom{2j}{j} \binom{2n-2i-2j}{n-i-j} \binom{n-i}{j}^{-1}.\end{equation}
Hence the right-hand side of \eqref{eq:mustar_poincare} becomes
\begin{equation}\label{eq:mustar_RHS}\frac{\omega_{2n-k}(n-k+q)!(2j)!}{\pi^{2n-k} j!}\cdot  \frac{(k-2q)!}{2^{k-2q}} \sum_{i=q}^{\lfloor \frac{k}{2} \rfloor}  \frac{(-1)^{i+q}}{(i-q)!} \frac{(2n-2i-2j)!}{(k-2i)!(n-i-j)!}.\end{equation}
Comparing \eqref{eq:mustar_LHS} and \eqref{eq:mustar_RHS}, we see that \eqref{eq:mustar_poincare} will be proved if we can show that
\begin{equation}\label{eq:combinat0}2^{k-2q} \binom{n-j-q}{k-2q} = \sum_{i=0}^{\lfloor \frac{k-2q}{2} \rfloor}  (-1)^{i} \binom{2n-2i-2j-2q}{k-2i-2q}\binom{n-j-q}{i}.\end{equation}
Now notice that \eqref{eq:combinat0} is just \eqref{eq:combinat2} with $m$ replaced by $n +q -j -k$ and $r$ replaced by $k-2q$. 
\end{proof}

\begin{lemma}
If $r$ is a nonnegative integer and $m$ is an integer satisfying $2m+r\geq 0$, then
\begin{equation}
\label{eq:combinat2}
 2^r \binom{m+r}{r}=  \sum^{\lfloor \frac{r}{2} \rfloor}_{i=0} (-1)^i \binom{2m+2r-2i}{r-2i}  \binom{m+r}{i}.
\end{equation}

\end{lemma}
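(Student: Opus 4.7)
The plan is to interpret both sides as coefficient extractions from a generating function and reduce to the binomial theorem. Set $n=m+r$; the hypothesis $2m+r\geq 0$ gives $n\geq r/2\geq 0$, so $n$ is a nonnegative integer and all ordinary binomial coefficients appearing are well defined.

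First I would rewrite each summand on the right-hand side using
$$\binom{2n-2i}{r-2i}=[x^{r-2i}](1+x)^{2n-2i}=[x^{r}]\,x^{2i}(1+x)^{2n-2i},$$
so that the right-hand side becomes $[x^{r}]\,F(x)$ with
$$F(x)=\sum_{i=0}^{\lfloor r/2\rfloor}(-1)^{i}\binom{n}{i}x^{2i}(1+x)^{2n-2i}.$$

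Next I would observe that extending the summation range costs nothing: for $i>\lfloor r/2\rfloor$ one has $2i\geq r+1$, so $x^{2i}(1+x)^{2n-2i}$ contributes only to powers of $x$ strictly greater than $r$ (and for $i>n$ the coefficient $\binom{n}{i}$ vanishes anyway). Hence
$$[x^{r}]F(x)=[x^{r}]\sum_{i=0}^{n}(-1)^{i}\binom{n}{i}x^{2i}(1+x)^{2n-2i}=[x^{r}](1+x)^{2n}\left(1-\frac{x^{2}}{(1+x)^{2}}\right)^{\!n}$$
by the binomial theorem. The inner expression simplifies to $((1+x)^{2}-x^{2})^{n}=(1+2x)^{n}$, so the right-hand side equals $[x^{r}](1+2x)^{n}=2^{r}\binom{n}{r}=2^{r}\binom{m+r}{r}$, which is the left-hand side.

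There is no real obstacle here; the only point of mild care is to check that the extension of the summation range is legitimate, which follows from the bounds on $i$ noted above. The identity is essentially $((1+x)^{2}-x^{2})^{n}=(1+2x)^{n}$ in disguise.
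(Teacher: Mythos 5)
Your proof is correct, and it takes a genuinely different route from the paper. The paper proves \eqref{eq:combinat2} by Zeilberger's algorithm (creative telescoping): fixing $r$, it exhibits a rational certificate $G(m,i)$ so that the summand $F(m,i)$ satisfies a telescoping recurrence $-(m+r+1)F(m,i)+(m+1)F(m+1,i)=G(m,i+1)-G(m,i)$; summing over $i$ yields the first-order recurrence $(m+1)S_{m+1}=(m+r+1)S_m$, and a second telescoping argument (in the variable $r$) pins down the initial value $S_0=2^r$. Your argument instead reads off both sides as $[x^r]$ of a generating function and collapses the inner sum by the binomial theorem to $(1+x)^{2n}\bigl(1-\tfrac{x^2}{(1+x)^2}\bigr)^{n}=(1+2x)^n$, after which the coefficient extraction is immediate. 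Both are valid; yours is shorter and exposes the algebraic identity $(1+x)^2-x^2=1+2x$ underlying the combinatorial one, whereas the paper's argument is mechanical and would be produced automatically by a computer algebra system. One small point worth tightening in your write-up: the legitimacy of extending the summation range to $i\leq n$ uses not only that $x^{2i}(1+x)^{2n-2i}$ has valuation $2i>r$, but also that $2n-2i\geq 0$ there so that the factor is a genuine polynomial; this holds precisely because the hypothesis $2m+r\geq 0$ gives $n\geq r/2$, hence $n\geq\lfloor r/2\rfloor$, and you do invoke $n\geq r/2$ at the start, so the point is covered, but it is the actual place where the hypothesis $2m+r\geq 0$ enters and deserves to be flagged.
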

\begin{proof}
We are going to use Zeilberger's algorithm \cite{zeilberger96}*{p. 101}. 
Fix $r\geq 0$. We denote the sum on the right-hand side of \eqref{eq:combinat2} by $S_m$ and put
$$F(m,i):= (-1)^i \binom{2m+2r-2i}{r-2i}  \binom{m+r}{i},\qquad 2m\geq -r.$$
One immediately checks that $F(m,i)$ satisfies the recurrence relation
\begin{equation}\label{eq:recurrence}-(m+r+1) F(m,i) + (m+1) F(m+1,i) = G(m,i+1)-G(m,i) \end{equation}
where 
$$G(m,i)= F(m,i) \frac{2i (2m+2r-2i+1)(m+r+1)}{(2m+r+1)(2m+r+2)}.$$
Summing the recurrence \eqref{eq:recurrence} over $i$ from $0$ to $\lfloor r/2 \rfloor$ and  using
that $ G(m,i+1)-G(m,i)$ telescopes to $0$, we obtain
$$ (m+1) S_{m+1} = (m+r+1) S_m$$
and therefore
$$S_m=\binom{m+r}{r} S_0,\qquad  2m\geq-r.$$
To show that  $S_0=2^r$ we put
$$f(r,i):=   (-1)^i \binom{2r-2i}{r}  \binom{r}{i}.$$
Then $f(r,i)$ satisfies 
$$(r+1)\left( 2 f(r,i)-f(r+1,i) \right)= g(r,i+1) -g(r,i)$$
with 
$$g(r,i)=4i \frac{2r-2i+1}{r-2i+1} f(r,i).$$ 
Summing the recurrence relation for $f$ over $i$ from   $0$ to $\lfloor r/2 \rfloor$, we obtain $S_0=2^r$.
\end{proof}

\section{Tensor valuations} \label{sec:tensor}

In the following it will be convenient to work with an abstract $n$-dimensional euclidean vector space $V$ instead of $\RR^n$. The group of special orthogonal transformations of $V$ will be denoted by $SO(V)$, 
the unit sphere by $S(V)$ and the sphere bundle of $V$ by $SV$. As before, $G\subset SO(V)$ is a closed, connected subgroup acting transitively on the unit sphere.
Given a non-negative integer $r$ we denote by $\Val^r=\Val^r(V)$ the vector space of translation-invariant, continuous valuations on $V$ with values in
$\Sym^r V$. Here $\Sym^rV\subset \bigotimes^r V$ denotes the subspace of symmetric tensors of rank $r$. We call $\Val^r$ the space of tensor valuations of rank $r$ and we denote by $\Val_k^r\subset \Val^r$ the subspace of $k$-homogeneous 
valuations. Applying McMullen's theorem \cite{mcmullen77} componentwise, we see that 
$$\Val^r= \bigoplus_{k=0}^n \Val_k^r.$$ 
Note that in this paper we only consider translation-invariant tensor valuations. 

If $f\colon V\rightarrow W$ is a linear map,
then $f^{\otimes r} \colon \bigotimes^r V\rightarrow \bigotimes^r W$ maps symmetric tensors to symmetric tensors. In particular, the action of $G$ on $V$ induces an action  on all
symmetric powers $\Sym^r V$. The group $G$ acts on tensor valuations of rank $r$ by $(g\cdot \Phi)(K)=g^{\otimes r} ( \Phi (g^{-1}K))$. A tensor valuation of rank $r$ is called  $G$-covariant, if 
$$g\cdot \Phi = \Phi$$
for every $g\in G$. The subspace of $G$-covariant tensor 
valuations is denoted by $\Val_k^{r,G}$. For more information on $SO(V)$-covariant tensor valuations, also in the non-translation-invariant case, see 
\cites{alesker99a, alesker_etal11, hug_etal07, hug_etal08,ludwig03, ludwig13, mcmullen97}. 

We denote by $\Val^{sm,r}\subset \Val^r$ the subspace of tensor valuations which can be written as
$\Phi(K)=\int_{N(K)} \omega,$
where $\omega$ is a translation-invariant, smooth differential form on the sphere bundle $SV$ with values in $\Sym^r V$.

\begin{lemma}
 $\Val^{r,G}\subset \Val^{sm,r}$.  In particular, $\Val^{r,G}$ is 
finite-dimensional. 
\end{lemma}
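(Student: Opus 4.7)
The plan is to reduce both claims to the well-known scalar case $r=0$, for which $\Val^G\subset \Val^{sm}$ and $\dim \Val^G<\infty$ are established in the literature (e.g.\ \cite{bernig_fu06}). Fix a basis $t_1,\dots,t_N$ of $\Sym^r V$ and, given $\Phi\in \Val^{r,G}$, write
$$\Phi=\sum_{\alpha=1}^N \Phi_\alpha\otimes t_\alpha,\qquad \Phi_\alpha\in\Val.$$
Writing $g\cdot t_\alpha=\sum_\beta M_{\beta\alpha}(g)\,t_\beta$, the $G$-covariance $g\cdot\Phi=\Phi$ rewrites as
$$\sum_\alpha (g\Phi_\alpha)\otimes (g\cdot t_\alpha)=\sum_\alpha \Phi_\alpha\otimes t_\alpha,$$
and, reading off coefficients with respect to the basis $\{t_\beta\}$, yields $\Phi_\beta=\sum_\alpha M_{\beta\alpha}(g)\,(g\Phi_\alpha)$ for every $g\in G$. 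Consequently $W:=\mathrm{span}\{\Phi_1,\dots,\Phi_N\}\subset\Val$ is a finite-dimensional $G$-invariant subspace.

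Next I would invoke (the slight extension to $G$-finite vectors of) the fact that continuous $G$-invariant scalar valuations are smooth whenever $G$ acts transitively on the unit sphere: decomposing $W$ into $G$-isotypic components and applying this statement to each summand yields $W\subset \Val^{sm}$. Hence every $\Phi_\alpha$ can be represented as $\Phi_\alpha(K)=\int_{N(K)}\omega_\alpha$ for some translation-invariant smooth form $\omega_\alpha\in\Omega^{n-1}(SV)$. Setting $\omega:=\sum_\alpha \omega_\alpha\otimes t_\alpha\in \Omega^{n-1}(SV)\otimes\Sym^r V$ (a translation-invariant smooth $\Sym^r V$-valued form), we obtain $\Phi(K)=\int_{N(K)}\omega$, which proves $\Phi\in\Val^{sm,r}$.

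For finite-dimensionality, I would first replace $\omega$ by its $G$-average $\tilde\omega:=\int_G g^{\otimes r}\,g^*\omega\,dg$. Because $N(gK)=gN(K)$ and $\Phi$ is $G$-covariant, the form $\tilde\omega$ still represents $\Phi$ and lies in $(\Omega^{n-1}(SV)\otimes\Sym^r V)^{\overline G}$. Since $G$ acts transitively on $S(V)$ and translations act transitively on $V$, the group $\overline G$ acts transitively on $SV$; hence the space of $\overline G$-invariant $\Sym^r V$-valued $(n-1)$-forms is isomorphic to the finite-dimensional space of $G_o$-invariants in $\Lambda^{n-1}T^*_{(0,o)}SV\otimes\Sym^r V$, where $G_o$ is the isotropy group of a point $o\in S(V)$. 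The map sending such a form to the valuation it represents is surjective onto $\Val^{r,G}$, so $\Val^{r,G}$ is finite-dimensional.

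The main obstacle is the implicit appeal in the second paragraph: one needs that a finite-dimensional $G$-stable subspace of $\Val$ consists of \emph{smooth} (in Alesker's $GL(V)$-sense) valuations, not merely $G$-smooth ones. For transitive $G\subset SO(V)$ this is a standard consequence of the representation-theoretic structure of $\Val$ developed by Alesker and by Bernig--Fu, which I would cite rather than reprove. Once granted, the rest of the argument is essentially bookkeeping with a basis of $\Sym^r V$ and a routine invariant-averaging on $SV$.
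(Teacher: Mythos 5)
Your reduction to the scalar case hinges on the assertion, flagged in your final paragraph, that a finite-dimensional $G$-invariant subspace $W\subset\Val$ automatically consists of smooth valuations (in Alesker's $GL(V)$-sense). That is a genuine gap, not a cosmetic one. The result you can cite from Bernig--Fu is only that $G$-\emph{invariant} scalar valuations are smooth, i.e.\ the trivial-isotypic case; passing to $G$-\emph{finite} vectors is a substantially stronger statement. For a maximal compact subgroup of $GL(V)$ one can invoke admissibility to conclude that $K$-finite vectors are analytic, but your $G$ is a proper subgroup of $SO(V)$, and $G$-finiteness does not imply $SO(V)$-finiteness. So the claim would have to be proved, and the natural way to prove it --- approximate by smooth valuations and then project onto the $\rho$-isotypic component, observing that the image is represented by smooth $\rho^*$-valued $G$-covariant forms and is hence finite-dimensional and closed --- is essentially the paper's argument applied directly at the level of tensor valuations. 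In other words, your reduction to components does not remove the analytic difficulty; it relocates it into an uncited lemma whose proof would reproduce the argument you were trying to avoid, with a risk of circularity since the desired lemma for arbitrary $r$ is precisely what controls such isotypic components.

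The paper sidesteps all of this: it applies Alesker's density theorem componentwise to get smooth tensor-valued approximations $\Phi_i$, introduces the $G$-averaging projector $\pi_G$ on both $\Sym^r V$-valued forms and tensor valuations, notes that the space of $\overline G$-invariant $\Sym^r V$-valued translation-invariant forms is finite-dimensional (by transitivity of $G$ on $S(V)$), hence the space of tensor valuations they represent is finite-dimensional and closed, and then takes the limit $\pi_G(\Phi_i)\to\pi_G(\Phi)=\Phi$. Everything you need --- smoothness, finite dimensionality, and the existence of a $G$-invariant representing form --- drops out in one pass, with no appeal to $G$-finiteness in $\Val$. The first part of your proposal (extracting $\Phi_\alpha$ and showing $W$ is $G$-invariant via the matrix $M(g)$) is correct but, in light of the above, not useful as a route to the result; the last part (averaging the representing form) is fine once a smooth representing form exists, but that existence is exactly the missing step.
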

\begin{proof}
A left $G$-action on the space of translation-invariant smooth differential forms on $SV$ with values in $\Sym^r V$ is given by
\begin{equation}\label{eq:G-action}L_g(v\otimes \omega):= (g^{-1})^*\omega \otimes  gv\end{equation}
for every $g\in G$, $v\in \Sym^r V$, and every translation-invariant form $\omega\in \Omega(SV)$. We define projections to the spaces of $G$-covariant valuations and 
$G$-invariant forms by
$$\pi_G(\Phi)= \int_G g\cdot \Phi\; dg \qquad \text{and}\qquad \pi_G(\omega)=\int_G L_g \omega \; dg,$$
respectively. Applying Theorem 5.2.1 of \cite{alesker06} componentwise, we obtain tensor valuations $\Phi_i$ given by smooth, translation-invariant differential forms $\omega_i$ such that 
$\Phi_i \rightarrow \Phi$ uniformly on compact subsets. 
Since $G$ acts transitively on the unit sphere, the space of $G$-invariant, translation-invariant forms with in values in $\Sym^r V$ is finite-dimensional. Therefore the space of tensor valuations represented by $G$-invariant forms
 is finite-dimensional
and consequently closed. Since
$$\pi_G(\Phi_i)(K)= \int_{N(K)} \pi_G(\omega_i) $$
and  $\pi_G(\Phi_i)\rightarrow \Phi$ uniformly on compact subsets, we conclude that $\Phi$ is represented by a $G$-invariant, translation-invariant form. 
\end{proof}

\subsection{Algebraic structures for tensor valuations}

In this subsection we extend some of the algebraic structures for scalar valuations to tensor valuations.

Choose some orthonormal basis $\{e_i\}$ of $V$. If $a$ is a symmetric tensor of rank $r$, then there exist numbers $a_{i_1\ldots i_r}$ such that
$$a= \sum_{i_1,\ldots,i_r=1}^n a^{i_1\ldots i_r} e_{i_1}\otimes \cdots \otimes e_{i_r}.$$
Using the Einstein summation convention --- which we will do in the following --- this can be written as $a=  a^{i_1\ldots i_r} e_{i_1}\otimes \cdots \otimes e_{i_r}$.
 Since $a$ is a symmetric tensor
we have $a^{i_{\pi(1)} \ldots i_{\pi(r)}}= a^{i_1\ldots i_r}$
for every permutation $\pi$ of the numbers $1,\ldots,r$.  
If $b$ is a symmetric tensor of rank $s$, $b=b^{i_1\ldots i_s} e_{i_1}\otimes \cdots e_{i_s}$,
then 
$$ab= \Sym(a\otimes b)= \frac{1}{(r+s)!} \sum_\pi a^{i_{\pi(1)}\ldots i_{\pi(r)}} b^{i_{\pi(r+1)}\ldots i_{\pi(r+s) } } e_{i_{1}} \otimes  \cdots \otimes e_{i_{r+s}},$$
where the sum extends over all permutations of $1,\ldots, r+s$.
If $s\geq r$, then we define the contraction of $a$ with $b$ by
$$\contr(a,b)=\contr(b,a):= \sum_{i_1,\ldots,i_r=1}^n a^{i_1\ldots i_r} b^{i_1\ldots i_r j_1\ldots j_q} e_{j_1}\otimes \cdots \otimes e_{j_q},\qquad q=s-r.$$
Note that $\contr(a,b)$ is a symmetric tensor of rank $q$ and that the definition of $\contr(a,b)$ is independent of the choice of orthonormal basis. Moreover, we have
\begin{equation}\label{eq:innerprodcontr}(a,b)=\contr(a,b), \qquad a,b\in \Sym^r V,\end{equation}
where $(a,b)$ the denotes the restriction of the usual inner product on $\bigotimes^r V$ to $\Sym^r V$. 
If $a\in\Sym^r V$, $b\in \Sym^s V$, and $c$ is a symmetric tensor of rank at least $r+s$, then
\begin{equation}\label{eq:contractions}
\contr(a,\contr(b,c))=\contr(ab,c).
\end{equation}

From the discussion above it is clear how the convolution and contraction of tensor valuations should be defined. First of all,
 if $\Phi\in \Val^{r}$ is a tensor valuation of rank $r$, then there exist scalar valuations $\Phi^{i_1\ldots i_r}$ such that
$$\Phi(K)= \Phi^{i_1\ldots i_r}(K) e_{i_1}\otimes \cdots \otimes e_{i_r}$$
for every convex body $K\subset V$ and $\Phi^{i_{\pi(1)} \ldots i_{\pi(r)}}(K)= \Phi^{i_1\ldots i_r}(K)$
for every permutation $\pi$ of $1,\ldots,r$. We define the convolution of $\Phi\in \Val^{sm,r}$ and $\Psi\in \Val^{sm,s}$ by
$$\Phi * \Psi=  \frac{1}{(r+s)!} \sum_\pi \Phi^{i_{\pi(1)}\ldots i_{\pi(r)}} *\Psi^{i_{\pi(r+1)}\ldots i_{\pi(r+s) } } e_{i_{1}} \otimes  \cdots \otimes e_{i_{r+s}}\in\Val^{r+s},$$
where the sum extends over all permutations of $1,\ldots, r+s$. The contraction of $\Phi$ with $\Psi$ defined by
$$\contr(\Phi,\Psi)=\contr(\Psi,\Phi):= \sum_{i_1,\ldots,i_r=1}^n \Phi^{i_1\ldots i_r}* \Psi^{i_1\ldots i_r j_1\ldots j_q} e_{j_1}\otimes \cdots \otimes e_{j_q} \in\Val^{sm,q},\qquad q=s-r.$$
Note that convolution and contraction of tensor valuations are compatible with the action of $G$, i.e.\ 
\begin{equation}\label{eq:Ginvariantcontra}g\cdot (\Phi *\Psi) = (g\cdot \Phi)*(g\cdot \Psi)\qquad\text{and}\qquad  \contr(g\cdot \Phi, g\cdot \Psi) = g\cdot \contr(\Phi,\Psi)  \end{equation}
for every $g\in G$. In particular, $\Phi* \Psi$ is $G$-covariant, if $\Phi$ and $\Psi$ are. 
Moreover, whenever the rank of $\Phi$ is greater than or equal to the sum of the ranks of $\Psi_1$ and $\Psi_2$  
\begin{equation}\label{eq:valcontraconv} \contr(\Psi_1,\contr(\Psi_2,\Phi))= \contr(\Psi_1 *\Psi_2, \Phi)\end{equation}
can be proved as \eqref{eq:contractions}.
If $r=s$,  we define the Poincar\'e duality paring of $\Phi$ and $\Psi$ by
$$(\Phi,\Psi)= \left\langle \chi^*, \contr(\Phi,\Psi)\right\rangle$$
and the Poincar\'e duality map $\widehat{\PD}\colon \Val^{sm,r} \to \left( \Val^{sm, r}\right)^*$ by $\left\langle \widehat{\PD}(\Phi),\Psi\right\rangle = (\Phi,\Psi)$.

We establish now a formula for the Poincar\'e duality paring of tensor valuations similar to Theorem~4.1 of \cite{bernig09}. To state the result we first have to introduce some notation.
For differential forms on a manifold $M$ with values in a finite-dimensional euclidean vector space $V$ we define a pairing by
$$(\;\cdot\;,\; \cdot\;) \colon \Omega(M,  V) \otimes \Omega(M,V) \rightarrow \Omega(M)$$
by
$$(\omega\otimes v, \omega'\otimes v):= \left\langle v,v'\right\rangle\; \omega \wedge \omega'.$$
The pull-back and exterior derivative of $V$-valued forms are defined componentwise. For differential forms on $M$ with values in the algebra $\Sym V$ we define a wedge product 
$$\wedge\  \colon \Omega(M,  \Sym V) \otimes \Omega(M,\Sym V) \rightarrow \Omega(M, \Sym V)$$
by
$$(\omega\otimes v) \wedge  (\omega'\otimes v'):=  \omega \wedge \omega' \otimes vv'.$$

Let us also recall a few facts regarding the convolution of smooth, translation-invariant valuations \cite{bernig_fu06}.
If $\phi_1,\phi_2\in\Val^{sm}$ are given by the  translation-invariant  differential forms $\omega_1$, $\omega_2$, then the convolution product $\phi_1 *\phi_2$ is represented by the differential form
\begin{equation}\label{eq:convproduct}*_1^{-1} (*_1\omega_1 \wedge *_1 D \omega_2), \end{equation}
where $D$ denotes the Rumin differential (see \cite{rumin94}), 
$$*_1 ( \pi_1^*\gamma_1\wedge \pi_2^*\gamma_2) = (-1)^{\binom{n-\operatorname{deg} \gamma_1}{2}} \pi_1^*(*\gamma_1)\wedge  \pi_2^*\gamma_2,$$
and $*$ denotes the Hodge star operator on $V$. If we let $D$ and $*_1$ act on vector-valued forms componentwise, then formula \eqref{eq:convproduct} holds verbatim for tensor valuations.

\begin{proposition} \label{lem:poincare} Let $0<k<n$. If $\Phi \in \Val_k^r$ and $\Phi' \in \Val^{r}_{n-k}$ are tensor valuations represented by differential forms $\omega$ and $\omega'$, then 
\begin{equation}\label{eq:poincare} (\Phi, \Phi')= \frac{(-1)^{k} }{\omega_n} \int_{B(V) \times S(V)} (\omega,D \omega'),\end{equation}
where $B(V)$ denotes the unit ball of $V$.
\end{proposition}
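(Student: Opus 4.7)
The plan is to reduce the tensor identity to its scalar analogue componentwise and then establish the scalar identity
\begin{equation}\label{eq:scalar_aux}
\langle \chi^*, \phi_1 * \phi_2\rangle = \frac{(-1)^k}{\omega_n}\int_{B(V)\times S(V)} \omega_1 \wedge D\omega_2
\end{equation}
for any scalar $\phi_1\in\Val_k^{sm}$, $\phi_2\in\Val_{n-k}^{sm}$ represented by translation-invariant forms $\omega_1,\omega_2\in\Omega^{n-1}(SV)$, directly from the convolution formula \eqref{eq:convproduct}. This is a sign-sensitive strengthening of Theorem~4.1 of \cite{bernig09}, valid without parity restrictions, and yields as a byproduct the odd-parity identity \eqref{eq:poincare_pairing}.

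For the reduction, fix an orthonormal basis $\{e_i\}$ of $V$ and write $\Phi = \Phi^I e_I$ and $\Phi' = (\Phi')^I e_I$ in multi-index notation, with scalar components $\Phi^I, (\Phi')^I\in\Val^{sm}$ represented by the corresponding component forms $\omega^I$ and $(\omega')^I$. The excerpt records that the Rumin differential and $*_1$ act componentwise on vector-valued forms, so by \eqref{eq:innerprodcontr} together with the definitions of $\contr$ and the pairing of vector-valued forms,
\begin{align*}
\contr(\Phi,\Phi') &= \sum_I \Phi^I * (\Phi')^I,\\
(\omega, D\omega') &= \sum_I \omega^I \wedge D(\omega')^I.
\end{align*}
Pairing the first identity with $\chi^*$ and summing \eqref{eq:scalar_aux} over $I$ then produces the statement of the proposition.

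To prove \eqref{eq:scalar_aux}, the plan is to use the convolution formula \eqref{eq:convproduct} to realize $\phi_1*\phi_2$ by the form $\xi := *_1^{-1}(*_1\omega_1 \wedge *_1 D\omega_2)$. The identification $N(\{0\}) = \{0\}\times S(V)$ with its canonical orientation then yields
$$\langle \chi^*, \phi_1*\phi_2\rangle = \int_{S(V)} \xi,$$
so the required identity reduces to the Fubini-type equality
$$\int_{B(V)\times S(V)} \omega_1 \wedge D\omega_2 = (-1)^k \omega_n \int_{S(V)} \xi.$$
I would verify this by decomposing both forms into their horizontal and spherical parts: since $\omega_1$ has horizontal degree $k$ and $D\omega_2$ must contribute horizontal degree $n-k$ to survive integration over the ball fibers, the ball integration selects exactly the complementary pair of horizontal multi-indices, produces the factor $\omega_n$, and acts on the spherical factors as the horizontal Hodge-star contraction encoded by $*_1^{-1}(*_1\,\cdot\,\wedge *_1\,\cdot\,)$.

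The main obstacle is the sign bookkeeping in this last step. One has to reconcile the product orientation on $B(V)\times S(V)$ with the sphere-bundle orientation underlying the normal cycle of $\{0\}$, track the signs in the definition of $*_1$ recalled in the excerpt, and account for the $(-1)^k$ arising from commuting a horizontal $k$-form past a horizontal $(n-k)$-form. Once these are consolidated into the single sign $(-1)^k$, the scalar identity follows and the reduction of the second paragraph completes the proof.
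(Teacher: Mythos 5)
Your proposal follows essentially the same path as the paper: reduce to scalar valuations by bilinearity of both pairings, represent $\phi_1*\phi_2$ by $*_1^{-1}(*_1\omega_1\wedge *_1 D\omega_2)$ via \eqref{eq:convproduct}, evaluate $\langle\chi^*,\;\cdot\;\rangle$ on $N(\{0\})=\{0\}\times S(V)$, and reduce the claim to an orientation and sign bookkeeping identity that produces the factor $(-1)^k\omega_n$. The one piece you leave unverified is precisely the step the paper spells out in detail: working with coordinate monomials $dx_I\wedge dy_J$ and $dx_{I'}\wedge dy_{J'}$, the paper reduces to the pointwise identity
\[ *_1^{-1}\bigl(*_1(dx_I\wedge dy_J)\wedge *_1(dx_{I'}\wedge dy_{J'})\bigr)=(-1)^k\Bigl(\tfrac{\partial}{\partial x_1}\wedge\cdots\wedge\tfrac{\partial}{\partial x_n}\Bigr)\lrcorner\bigl(dx_I\wedge dy_J\wedge dx_{I'}\wedge dy_{J'}\bigr), \]
and verifies it using the congruence $\binom{n}{2}+\binom{k}{2}+\binom{n-k}{2}\equiv k(n-1)\pmod 2$ together with the signs of the Hodge stars; that computation would need to be carried out to complete your argument.
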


\begin{proof} By the bilinearity of the pairings $(\Phi, \Phi')$ and $(\omega,D \omega')$ it will be sufficient to prove the formula for scalar valuations.

Let $x_1,\ldots, x_n$ be the standard coordinates on $V=\RR^n$ and let $y_1,\ldots,y_{n-1}$ be local coordinates on $S^{n-1}$ Put $dx_I=dx_{i_1} \wedge \cdots \wedge dx_{i_k}$ and 
$dx_{I'}= dx_{i'_1}\wedge \cdots \wedge dx_{i'_{n-k}}$ and put $dy_{J}=dy_{j_1}\wedge \cdots \wedge dy_{j_{n-k-1}}$ and $dy_{J'}=dy_{j'_1}\wedge \cdots \wedge dy_{j'_{k}}$. By linearity it will be sufficient to 
prove 
$$*_1^{-1}(*_1(dx_I \wedge dy_J) \wedge *_1 (dx_{I'}\wedge dy_{J'})) =(-1)^k \left(\frac{\partial }{\partial x_1} \wedge \cdots \wedge\frac{\partial }{\partial x_n}\right) \lrcorner  (dx_I \wedge dy_J \wedge dx_{I'}\wedge dy_{J'}). $$

If $dx_I\wedge dx_{I'}=0$ there is nothing to prove since the left-hand and right-hand side are zero. If $dx_I\wedge dx_{I'}\neq 0$, we choose $\epsilon\in \{-1,1\}$ such that
$dx_I\wedge dx_{I'} =\epsilon \vol_{\RR^n}$.  Then $*(dx_I)= \epsilon dx_{I'}$ and $*(dx_{I'})= (-1)^{k(n-k)}\epsilon dx_{I}$. Using that
$$\binom{n}{2} + \binom{k}{2} + \binom{n-k}{2} \equiv k(n-1) \mod 2$$
we compute 
\begin{align*}
*_1^{-1}( *_1(dx_I \wedge dy_J) \wedge *_1 (dx_{I'}\wedge dy_{J'})) &= *_1^{-1}\left( (-1)^{\binom{k}{2} + \binom{n-k}{2} + k(n-k)} dx_{I'} \wedge dy_J \wedge dx_I \wedge dy_{J'} \right) \\
    & = *_1^{-1}\left( (-1)^{\binom{k}{2} + \binom{n-k}{2} +k(n-k-1)} dx_I\wedge dx_{I'} \wedge dy_J \wedge dy_{J'}\right)\\
     &= *_1^{-1}\left( (-1)^{\binom{k}{2} + \binom{n-k}{2} +k(n-k-1)} \epsilon \vol_{\RR^n}  \wedge dy_J \wedge dy_{J'}\right)\\
      &= (-1)^{k^2} \epsilon\; dy_J\wedge dy_{J'}\\
      & =  (-1)^{k}\epsilon\; dy_J\wedge dy_{J'}.
\end{align*}
On the other hand,
\begin{align*}
\left(\frac{\partial }{\partial x_1} \wedge \cdots \wedge\frac{\partial }{\partial x_n}\right) \lrcorner  (dx_I \wedge dy_J \wedge dx_{I'}\wedge dy_{J'}) 
&= (-1)^{(n-k)(n-k-1)} \epsilon\;  dy_J  \wedge dy_{J'}  \\
    & =  \epsilon\;  dy_J  \wedge dy_{J'}.
\end{align*}

\end{proof}

We remark that Proposition~\ref{lem:poincare} together with Theorem~4.1 of \cite{bernig09} implies \eqref{eq:poincare_pairing}.

\subsection{The ftaig for tensor valuations}

The goal of this subsection is to introduce a new kinematic operator for tensor valuations and to prove a 
version of the fundamental theorem of algebraic integral geometry (ftaig) for this operator.

Observe that if  $f,g\colon V\rightarrow W$ are linear maps, $r_1,r_2\geq 0$, and $p\in \Sym^{r_1+r_2}V$, then $f^{\otimes r_1} \otimes g^{\otimes r_2} (p)$ is in general not an element of $\Sym^{r_1+r_2}W$, but only
an element of $ \Sym^{r_1}W \otimes \Sym^{r_2}W$. Given $\Phi\in  \Val^{r_1+r_2,G}_k$, we define  a bivaluation (see, e.g., \cite{ludwig10} or \cite{alesker_etal11} for more information on bivaluations) 
with values in $ \Sym^{r_1}V \otimes \Sym^{r_2}V$ by
$$a^{r_1,r_2}(\Phi)(K,L) = \int_G (\operatorname{id}^{\otimes r_1} \otimes g^{\otimes r_2}) \Phi(K+ g^{-1}L)\; dg$$
and call $a^{r_1,r_2}$ the \emph{additive kinematic operator} for tensor valuations. 

Note that 
\begin{equation}
 \label{eq:Gcovar}
  a^{r_1,r_2}(\Phi)(h_1K,h_2 L)= h_1^{\otimes r_1}\otimes  h_2^{\otimes r_2} (A_G^{r_1,r_2}(\Phi)(K,L))
\end{equation}
whenever $h_1,h_2\in G$.

\begin{theorem}
 Let $\Phi_1,\ldots, \Phi_{m_1}$ be a basis of $\Val^{r_1,G}$ and let $\Psi_1,\ldots, \Psi_{m_2}$ be a basis of $\Val^{r_2,G}$. If $\Phi\in \Val^{r_1+r_2,G}$, then 
there exist constants $c_{ij}$ depending only on $\Phi$ such that
$$a^{r_1,r_2}(\Phi)(K,L) = \sum_{i,j} c_{ij} \; \Phi_i(K)\otimes \Psi_j(L)$$
for all convex bodies $K,L\subset V$.
In particular, we can consider the additive kinematic operator  as a linear  map 
$$a^{r_1,r_2}\colon \Val^{r_1+r_2,G}\rightarrow\Val^{r_1,G}\otimes \Val^{r_2,G}.$$
\end{theorem}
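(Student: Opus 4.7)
Following the fiber-integration strategy of the proof of Theorem~\ref{thm:existence}, the plan is to lift the integral defining $a^{r_1,r_2}(\Phi)(K,L)$ to a vector-valued fiber integration on the bundle $p\colon E\to SV\times SV$ constructed in Section~\ref{sec:existence}. By the lemma preceding the statement, $\Phi$ is represented by a translation-invariant form $\omega\in\Omega^{n-1}(SV,\Sym^{r_1+r_2}V)^G$, and, as in the scalar case, it suffices to handle convex bodies with smooth, positively curved boundaries; the extension to arbitrary $K,L$ follows by a componentwise application of Lemmas~\ref{lem_massbound} and \ref{lem_weakconvergence}. After the substitution $g\mapsto g^{-1}$, the additive kinematic operator reads
$$a^{r_1,r_2}(\Phi)(K,L)=\int_G(\id^{\otimes r_1}\otimes(g^{-1})^{\otimes r_2})\int_{N(K+gL)}\omega\;dg.$$

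On $E$ I would introduce the $\Sym^{r_1}V\otimes\Sym^{r_2}V$-valued form
$$\tilde\omega:=q_1^*dg\wedge(\id^{\otimes r_1}\otimes(q_1^{-1})^{\otimes r_2})\,q_2^*\omega,$$
where $q_1^{-1}$ is the $G$-valued function on $E$ given by inversion. Since the twist $(q_1^{-1})^{\otimes r_2}$ is constant on each slice $q_1^{-1}(g)\cap C$ (with $C=p^{-1}(N(K)\times N(L))$), applying the slicing formula \eqref{eq:slicing} and the fiber integration \eqref{eq:fiber_fubini} componentwise produces
$$a^{r_1,r_2}(\Phi)(K,L)=\int_{N(K)\times N(L)}p_*\tilde\omega.$$

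The central algebraic step is to verify the twisted equivariance $(h,k)^*p_*\tilde\omega=(h_0^{\otimes r_1}\otimes k_0^{\otimes r_2})\,p_*\tilde\omega$ for all $(h,k)\in\overline G\times\overline G$. Under the $\overline G\times\overline G$-action on $E$, one has $(h,k)^*q_1=h_0q_1k_0^{-1}$, whence $((h,k)^*q_1^{-1})^{\otimes r_2}=k_0^{\otimes r_2}(q_1^{-1})^{\otimes r_2}h_0^{-\otimes r_2}$. Combining this with the $G$-invariance $h^*\omega=h_0^{\otimes(r_1+r_2)}\omega$ and the bi-invariance of $dg$, the $h_0^{\otimes r_2}$-factor from pulling back $\omega$ cancels the $h_0^{-\otimes r_2}$-factor introduced by the twist, leaving $(h,k)^*\tilde\omega=(h_0^{\otimes r_1}\otimes k_0^{\otimes r_2})\tilde\omega$. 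The bundle-map argument of Lemma~\ref{lem:invariance}, applied componentwise, passes this equivariance through to $p_*\tilde\omega$.

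Finally, since $G$ acts transitively on $S^{n-1}$, the space of translation-invariant, $\overline G$-equivariant $\Sym^{r}V$-valued forms on $SV$ is finite-dimensional; the standard factorization of invariants in a product of group actions then identifies the corresponding space of twisted-equivariant $\Sym^{r_1}V\otimes\Sym^{r_2}V$-valued forms on $SV\times SV$ with the external tensor product of the two. Writing $p_*\tilde\omega=\sum_{i,j}c_{ij}\,\alpha_i\boxtimes\beta_j$ with $\alpha_i,\beta_j$ representing the prescribed bases $\Phi_i,\Psi_j$, integration over $N(K)\times N(L)$ yields the desired bilinear splitting. The main obstacle will be the tensor-algebra bookkeeping in the equivariance of $\tilde\omega$, in particular the correct placement of the twist $(q_1^{-1})^{\otimes r_2}$ that realizes the cancellation of the $h_0^{\otimes r_2}$-factor, together with the verification of the factorization of twisted-equivariant vector-valued forms on the product.
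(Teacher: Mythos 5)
Your argument is correct, but it takes a genuinely different route from the paper's. The paper proves this result with a short, purely algebraic argument that makes no use of the fiber-integration machinery of Section~\ref{sec:existence}: expanding $a^{r_1,r_2}(\Phi)(K,L)=\sum_{r,s}\phi_{rs}(K,L)\,a_r\otimes b_s$ in a fixed basis $\{a_r\otimes b_s\}$ of $\Sym^{r_1}V\otimes\Sym^{r_2}V$, one observes that each $\phi_{rs}$ is a bivaluation, uses the covariance property \eqref{eq:Gcovar} with $h_2=e$ to see that for each fixed $L$ the map $K\mapsto\sum_r\phi_{rs}(K,L)a_r$ lies in the finite-dimensional space $\Val^{r_1,G}$ (this is where the preceding lemma on finite-dimensionality enters), expands in the basis $\{\Phi_i\}$, and then repeats the argument in the second variable using \eqref{eq:Gcovar} with $h_1=e$. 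Your approach instead re-runs the geometric construction of Theorem~\ref{thm:existence} with a twisted $\Sym^{r_1}V\otimes\Sym^{r_2}V$-valued form on the bundle $E$: this requires more care (the representability of $\Phi$ by a translation-invariant form, the vector-valued slicing and fiber integration performed componentwise, the equivariance computation on $E$ with the twist $(q_1^{-1})^{\otimes r_2}$, and the factorization of twisted-equivariant vector-valued forms on $SV\times SV$), but the cancellation you describe does go through and the factorization-of-invariants step is valid, since the equivariant forms are determined by their values at a single point $(o,o)$ where the finite-dimensional $(G_o\times G_o)$-representation factors as a tensor product. What the paper's argument buys is brevity and independence from the differential-form representation; what yours buys is a concrete description of $a^{r_1,r_2}(\Phi)$ as an integral current pairing against $p_*\tilde\omega$, in the same spirit as the scalar kinematic existence theorem, which could in principle be exploited to compute the constants $c_{ij}$ directly (the paper instead determines them by other means later).
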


\begin{proof}
 Let $\{a_r\} $  be a basis of $ \Sym^{r_1}V $ and let  $\{b_s\}$ be a basis of $ \Sym^{r_2}V $. For every $K$ and $L$ there exist numbers $\phi_{rs}(K,L)$ such that 
$$a^{r_1,r_2}(\Phi)(K,L) = \sum_{r,s} \phi_{rs}(K,L) a_r\otimes b_s.$$
Since $\{ a_r\otimes b_s\}$ is a basis of $ \Sym^{r_1}V \otimes \Sym^{r_2}V$, $\phi_{rs}$ is a bivaluation. If we fix $L$, then by \eqref{eq:Gcovar}, $K\mapsto \sum_{r} \phi_{rs}(K,L) a_r$
is an element of $\Val^{r_1, G}$ for every $s$ and hence there exist numbers $\mu_{is}(L)$ such that
 $$\sum_{r} \phi_{rs}(K,L) a_r = \sum_i \mu_{is}(L) \Phi_i(K).$$
Since $\{ \Phi_i\}$ is a basis of $\Val^{r_1,G}$, each $\mu_{is}$ is a valuation. Rearranging terms, we arrive at
$$a^{r_1,r_2}(\Phi)(K,L) = \sum_{i,s}\mu_{is}(L) \Phi_i(K) \otimes b_s= \sum_i \Phi_i(K) \otimes \left(\sum_s \mu_{is}(L) b_s\right).$$
Again  by \eqref{eq:Gcovar}, for each $i$, $L\mapsto \sum_s \mu_{is}(L) b_s$
is an element of $\Val^{r_2,G}$. Hence there exist constants $c_{ij}$ depending only on $\Phi$ such that 
$$\sum_s \mu_{is}(L) b_s=\sum_j c_{ij} \Psi_j(L)$$
and thus
$$a^{r_1,r_2}(\Phi)(K,L)= \sum_{i,j} c_{ij} \; \Phi_i(K)\otimes \Psi_j(L).$$

\end{proof}

\begin{lemma}\label{lem:kinmatcontraction}
 If $\Psi_1\in\Val^{r_1,G}$ and $\Psi_2\in \Val^{r_2,G}$, then 
$$ \contr(\Psi_1,\;\cdot \;)\otimes  \contr(\Psi_2,\;\cdot \;) \circ a^{r_1,r_2} = a\circ \contr(\Psi_1*\Psi_2, \; \cdot\;).$$
\end{lemma}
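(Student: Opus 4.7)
The plan is to compute both sides on a pair $(K, L)$ of convex bodies by reducing to scalar components in a fixed orthonormal basis $\{e_i\}$ of $V$. Write $\Phi^{IJ}$, $\Psi_1^I$, $\Psi_2^J$ for the scalar components with multi-indices of lengths $|I|=r_1$, $|J|=r_2$. By \eqref{eq:valcontraconv} and the symmetry of the tensor values of $\Phi$, one has $\contr(\Psi_1*\Psi_2,\Phi)=\sum_{I,J}\Psi_1^I*\Psi_2^J*\Phi^{IJ}$. Applying $a$ and using the scalar compatibility $a(\phi*\psi)=(\phi\otimes\vol)*a(\psi)=(\vol\otimes\phi)*a(\psi)$ twice, the right-hand side becomes
\[
 a(\contr(\Psi_1*\Psi_2,\Phi))(K,L)=\sum_{I,J}\int_G \Psi_1^I *_K \Psi_2^J *_L \Phi^{IJ}(K+gL)\,dg,
\]
where $*_K, *_L$ denote Bernig--Fu convolution in the indicated variable.

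For the left-hand side, using $(\id^{\otimes r_1}\otimes g^{\otimes r_2})(e_I\otimes e_{J''})=\sum_J g^J_{J''}e_I\otimes e_J$, the $(I,J)$-coefficient of $a^{r_1,r_2}(\Phi)(K,L)$ in the basis $\{e_I\otimes e_J\}$ of $\Sym^{r_1}V\otimes\Sym^{r_2}V$ is
\[
 P_{I,J}(K,L)=\sum_{J''}\int_G g^J_{J''}\,\Phi^{IJ''}(K+g^{-1}L)\,dg.
\]
Interpreting the contraction as componentwise convolution in each factor, substituting $h=g^{-1}$ with $g^J_{J''}=h^{J''}_J$ (orthogonality of $h$), and applying the $G$-covariance of $\Psi_2$ in the form $\sum_J h^{J''}_J\Psi_2^J(L)=\Psi_2^{J''}(hL)$, the inner sum over $J$ collapses to $R_h\Psi_2^{J''}$, where $R_h\phi(L):=\phi(hL)$. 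Writing $\Phi^{IJ''}(K+hL)=R_h\phi_K^{IJ''}(L)$ with $\phi_K^{IJ''}(M):=\Phi^{IJ''}(K+M)$, and invoking the $G$-covariance $R_h\alpha*R_h\beta=R_h(\alpha*\beta)$ of convolution, this yields
\[
 \mathrm{LHS}(K,L)=\sum_{I,J''}\int_G \Psi_1^I *_K (\Psi_2^{J''}*\phi_K^{IJ''})(hL)\,dh.
\]

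The final identification uses the Minkowski property of Bernig--Fu convolution, $(\mu_C*\phi)(K)=\phi(K+C)$, which is employed in the proof of Proposition~\ref{prop:poincare_evaluation} (cf.\ \cite{bernig_etal13}); equivalently, $\psi*_M\phi(K+M)=(\psi*\phi)(K+M)$ as bivaluations. Two applications in succession --- first in the $M$-variable, then in the $K$-variable --- convert the integrand above into $(\Psi_1^I*\Psi_2^{J''}*\Phi^{IJ''})(K+hL)$. Summing over $I,J''$ and recognising $a(\contr(\Psi_1*\Psi_2,\Phi))(K,L)$ recovers the right-hand side. The main obstacle is the careful bookkeeping around the matrix entries $h^{J''}_J$: they must be absorbed in the correct order --- first by the $G$-covariance of $\Psi_2$, then by the $G$-covariance of convolution --- before the Minkowski property can be applied to peel off the $K$- and $M$-dependencies one at a time.
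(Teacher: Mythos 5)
Your proof is correct and reaches the same conclusion, but the execution differs from the paper's. The paper works abstractly at the tensor level: it expresses
$a^{r_1,r_2}(\Phi)(\cdot,L)=\int_G(\id^{\otimes r_1}\otimes g^{\otimes r_2})\mu_{g^{-1}L}*\Phi\,dg$
for smooth $L$, applies $\contr(\Psi_1,\cdot)$ in the first slot to obtain $\int_G g^{\otimes r_2}\contr(\Psi_1,\Phi)(K+g^{-1}L)\,dg$, then exploits the $G$-covariance of the contracted tensor valuation $\contr(\Psi_1,\Phi)$ to rewrite the integrand as $\contr(\Psi_1,\Phi)(gK+L)$, and finally invokes \eqref{eq:valcontraconv} when contracting with $\Psi_2$. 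You instead expand everything into scalar components from the outset, track the matrix entries $g^J_{J''}$ explicitly, change variables $h=g^{-1}$ and use orthogonality to rewrite them as $h^{J''}_J$, and then absorb them via the $G$-covariance of $\Psi_2$ and the $G$-equivariance of convolution before applying the Minkowski property twice. The two proofs rely on the same two pillars --- the Minkowski additivity of the Bernig--Fu convolution and $G$-covariance/equivariance --- but organize them differently: the paper pushes all the group action bookkeeping into the phrase ``by \eqref{eq:Ginvariantcontra}'' and uses \eqref{eq:valcontraconv} at the end, whereas you make the index manipulation explicit and use \eqref{eq:valcontraconv} only to identify the right-hand side at the start. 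Your version is more transparent about where each of the two ``peel off one variable'' Minkowski steps happens, at the cost of heavier notation; the paper's version is shorter but leaves the commutation of $\contr(\Psi_2,\cdot)$ with the $G$-integral and the $\mu_{gK}*$ convolution slightly implicit. One small presentational caveat in your write-up: in the displayed formula for the right-hand side, the convolutions $\Psi_1^I *_K \Psi_2^J *_L$ should act on the whole bivaluation $a(\Phi^{IJ})$, i.e.\ sit outside the $\int_G$, not inside; this is what the compatibility identity $a(\phi*\psi)=(\phi\otimes\vol)*a(\psi)$ actually gives. This is harmless since you eventually justify moving convolution through the $G$-integral, but as written the order of operations is reversed. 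Also, the reduction $\psi *_M \phi(K+\cdot)=(\psi*\phi)(K+\cdot)$ is not literally ``equivalent'' to $\mu_C*\phi(K)=\phi(K+C)$; it follows from it by associativity/commutativity of $*$ plus a density or continuity argument (as in the paper, one should first assume $K$ and $L$ smooth and then pass to the limit).
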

\begin{proof}
If $C\subset V$ is a convex body with smooth boundary and all principal curvatures positive, then $\mu_C(K)=\vol(K+C)$ is a smooth valuation and 
$\mu_C *\phi(K)=\phi(K+C)$ for every $\phi\in \Val^{sm}$ and every convex body $K\subset V$, see \cite{bernig_fu06}.

Assume for the moment that $L$ has smooth boundary with all principal curvatures positive. Then
\begin{align*} a^{r_1,r_2}(\Phi)(\; \cdot\;,L) &= \int_G (\id^{\otimes r_1}\otimes g^{\otimes r_2})  \mu_{g^{-1}L}*\Phi \; dg \\
 & = \int_G \mu_{g^{-1}L} * \Phi_{i_1\ldots i_{r_1+r_2}} e_1\otimes\cdots\otimes e_{i_{r_1}}\otimes ge_{i_{r_1+1}} 
\otimes \cdots \otimes ge_{i_{r_1+r_2}} \; dg
\end{align*}
and hence 
\begin{align*}
\left[ (\contr(\Psi_1,\;\cdot \;)\otimes  \id) \circ a^{r_1,r_2}\right] (\Phi)(K, L)  &= \int_G \mu_{g^{-1}L} * (\Psi_1)_{i_1\ldots i_{r_1}}*\Phi_{i_1\ldots i_{r_1+r_2}}  ge_{i_{r_1+1}} \otimes \cdots \otimes ge_{i_{r_1+r_2}} \; dg\\
			  &= \int_G g^{\otimes r_2 } \contr(\Psi_1,\Phi)(K+g^{-1} L)\; dg\\
			  &= \int_G  \contr(\Psi_1,\Phi)(gK+L)\; dg,
\end{align*}
where the last line follows from \eqref{eq:Ginvariantcontra}. 
Applying now \eqref{eq:valcontraconv}, yields 
$$
 \left[ \left(\contr(\Psi_1,\;\cdot \;)\otimes  \contr(\Psi_2,\;\cdot \;)\right) \circ a^{r_1,r_2}\right](\Phi)(K,L) = \int_G \contr(\Psi_1*\Psi_2,\Phi))(gK+L).$$
By continuity, this equality holds for all convex bodies $K$ and $L$.

\end{proof}

\begin{lemma}
The Poincar\'e duality map $\widehat{\PD}\colon \Val^{r,G} \rightarrow (\Val^{r,G})^*$ is a linear isomorphism.
\end{lemma}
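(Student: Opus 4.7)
The plan is to show injectivity of $\widehat{\PD}$; since $\Val^{r,G}$ is finite-dimensional by the previous lemma, this automatically gives bijectivity. So I need to show that if $\Phi\in\Val^{r,G}$ satisfies $(\Phi,\Psi)=0$ for every $\Psi\in\Val^{r,G}$, then $\Phi=0$.

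First I would verify that the Poincar\'e pairing is $G$-invariant on the ambient space $\Val^{sm,r}$: by \eqref{eq:Ginvariantcontra}, $\contr(g\cdot\Phi,g\cdot\Psi) = g\cdot\contr(\Phi,\Psi)$, and since $g\in G$ fixes the origin, $\chi^*$ is invariant under the $G$-action on scalar valuations. This gives $(g\cdot\Phi,g\cdot\Psi)=(\Phi,\Psi)$. Next I would enlarge the space of test valuations via the averaging map $\pi_G(\Phi')=\int_G g\cdot\Phi'\,dg$ used earlier in this section: if $\Phi\in\Val^{r,G}$ and $\Phi'\in\Val^{sm,r}$, then by Fubini
\[ (\Phi,\pi_G(\Phi')) \;=\; \int_G (\Phi,g\cdot\Phi')\,dg \;=\; \int_G (g^{-1}\cdot\Phi,\Phi')\,dg \;=\; (\Phi,\Phi'), \]
so if $\Phi$ is orthogonal to $\Val^{r,G}$ it is actually orthogonal to all of $\Val^{sm,r}$. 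Finally, fixing an orthonormal basis $\{b_j\}$ of $\Sym^r V$ yields an isomorphism $\Val^{sm,r}\cong \Val^{sm}\otimes \Sym^r V$, $\Phi\mapsto\sum_j \phi_j\otimes b_j$, under which the Poincar\'e pairing factors as the tensor product of the classical Alesker--Bernig--Fu pairing $\langle\chi^*,\phi*\psi\rangle$ on $\Val^{sm}$ and the inner product on $\Sym^r V$. A nonzero $\Phi$ has some component $\phi_{j_0}\neq 0$, and by Alesker's Poincar\'e duality for smooth scalar valuations there exists $\psi\in\Val^{sm}$ with $\langle\chi^*,\phi_{j_0}*\psi\rangle\neq 0$; then $(\Phi,\psi\otimes b_{j_0})\neq 0$, contradicting orthogonality.

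I do not anticipate serious obstacles: the heavy lifting is done by Alesker's Poincar\'e duality for scalar valuations, and the tensor structure of $\Val^{sm,r}$ reduces our problem to his. The step that requires the most care is the averaging identity $(\Phi,\pi_G(\Phi'))=(\Phi,\Phi')$ for $G$-covariant $\Phi$, which hinges on $G$-invariance of the Poincar\'e pairing (in turn on the $G$-equivariance of $\contr$ from \eqref{eq:Ginvariantcontra}) and on the fact that $\chi^*$ is preserved by the $G$-action on scalar valuations because $G$ fixes the origin.
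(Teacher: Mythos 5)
Your proof is correct and uses the same two ingredients as the paper's: perfectness of the scalar Poincar\'e pairing on $\Val^{sm}$ and $G$-invariance of the tensor pairing combined with averaging. The paper argues directly (fix nonzero $\Phi$, find a non-covariant partner $\Psi'$, then average $\Psi'$) while you argue by contrapositive (show orthogonality to $\Val^{r,G}$ forces orthogonality to all of $\Val^{sm,r}$), and you spell out the tensor factorization of the pairing that the paper leaves implicit, but these are cosmetic differences in the same argument.
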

\begin{proof} Fix some nonzero tensor valuation $\Phi\in \Val^{r,G}$. It follows from  \eqref{eq:poincare_pairing_even} and \eqref{eq:poincare_pairing} that the 
pairing of scalar valuations $\left\langle \chi^*, \phi* \psi\right\rangle$ is perfect.  Hence there exists a tensor valuation $\Psi'$, not necessarily $G$-covariant, 
such that
$(\Phi, \Psi')\neq 0$. By \eqref{eq:Ginvariantcontra}, the Poincar\'e pairing of tensor valuations is $G$-invariant. Hence, if we put $\Psi=\int_G g\cdot \Psi' \; dg$, then
$$(\Phi , \Psi) =\int_G ( \Phi , g\cdot \Psi')\; dg = ( \Phi , \Psi')\neq 0.$$
\end{proof}

\begin{theorem}[ftaig]\label{thm:ftaig} Let $r_1,r_2$ be non-negative integers and denote by $c_G\colon \Val^{r_1,G}\otimes \Val^{r_2,G}\rightarrow \Val^{r_1+r_2, G}$ the convolution of tensor valuations. Then
$$a^{r_1,r_2} = (\widehat{\PD}^{-1}\otimes \widehat{\PD}^{-1}) \circ c^*_G \circ \widehat{\PD}.$$
\end{theorem}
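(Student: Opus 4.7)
The plan is to reduce the theorem to the scalar additive kinematic operator by pairing with test tensor valuations and using Lemma~\ref{lem:kinmatcontraction} as the bridge. Since $\widehat{\PD}$ is a linear isomorphism (proven just above), the asserted identity is equivalent to
\begin{equation*}
\bigl\langle (\widehat{\PD}\otimes \widehat{\PD})\bigl(a^{r_1,r_2}(\Phi)\bigr), \Psi_1\otimes \Psi_2\bigr\rangle = \bigl\langle c_G^*(\widehat{\PD}(\Phi)), \Psi_1\otimes \Psi_2\bigr\rangle = (\Phi,\Psi_1*\Psi_2)
\end{equation*}
holding for every $\Phi \in \Val^{r_1+r_2,G}$, $\Psi_1 \in \Val^{r_1,G}$, $\Psi_2 \in \Val^{r_2,G}$, so the task is to verify this scalar identity.

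First I would unravel the definition of $\widehat{\PD}$ on both factors of the left-hand side, using $\langle \widehat{\PD}(\Phi'),\Psi'\rangle = \langle \chi^*, \contr(\Phi',\Psi')\rangle$, to rewrite it as
\begin{equation*}
\bigl\langle \chi^*\otimes \chi^*,\ \bigl(\contr(\Psi_1,\,\cdot\,)\otimes \contr(\Psi_2,\,\cdot\,)\bigr)\bigl(a^{r_1,r_2}(\Phi)\bigr)\bigr\rangle.
\end{equation*}
At this point I would invoke Lemma~\ref{lem:kinmatcontraction}, which says that contracting in each tensor slot turns $a^{r_1,r_2}$ into the scalar operator $a$ applied to the full contraction $\contr(\Psi_1*\Psi_2,\,\cdot\,)$. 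Setting $\phi:=\contr(\Psi_1*\Psi_2,\Phi)\in \Val^G$ (a scalar valuation), the expression collapses to $\bigl\langle \chi^*\otimes \chi^*, a(\phi)\bigr\rangle$.

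The final step is a direct computation from the definition of $a$:
\begin{equation*}
(\chi^*\otimes \chi^*)(a(\phi)) = a(\phi)(\{0\},\{0\}) = \int_G \phi(\{0\}+g\{0\})\,dg = \phi(\{0\}) = \langle \chi^*, \phi\rangle,
\end{equation*}
so the whole thing equals $\langle \chi^*, \contr(\Psi_1*\Psi_2,\Phi)\rangle = (\Phi,\Psi_1*\Psi_2)$, exactly matching the right-hand side.

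The only substantive ingredients are Lemma~\ref{lem:kinmatcontraction} and the nondegeneracy of $\widehat{\PD}$, both of which are already in hand; once those are granted the argument is essentially a definition-chase. The one place that might tempt a slip is keeping track of which factor of $\Val^{r_i,G}$ is being paired with which, so I would be careful to fix notation for the two tensor slots before starting and use the symmetry of $\contr$ to line things up cleanly.
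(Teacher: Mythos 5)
Your proposal is correct and follows essentially the same route as the paper's own proof: pair against test tensor valuations $\Psi_1\otimes\Psi_2$, unwind $\widehat{\PD}$ to a double contraction, apply Lemma~\ref{lem:kinmatcontraction} to replace the two contractions with a single scalar-valued $a\circ\contr(\Psi_1*\Psi_2,\cdot)$, and close with the identity $\langle a(\phi),\chi^*\otimes\chi^*\rangle=\langle\phi,\chi^*\rangle$. Your spelling out of that last identity via $a(\phi)(\{0\},\{0\})=\phi(\{0\})$ is a small but welcome elaboration of what the paper states as ``clear from the definition.''
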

\begin{proof}
 Let $\Psi_1$ and $\Psi_2$ be $G$-covariant  tensor valuations of rank $r_1$ and $r_2$, respectively. By the definition of Poincar\'e duality and Lemma~\ref{lem:kinmatcontraction} we have
\begin{align*} \left\langle \widehat{\PD} \otimes \widehat{\PD} \circ a^{r_1,r_2}(\Phi), \Psi_1\otimes \Psi_2\right\rangle & = \left\langle \contr(\Psi_1, \; \cdot\; ) \otimes \contr(\Psi_2, \; \cdot\; ) \circ a^{r_1,r_2}(\Phi), \chi^*\otimes \chi^*\right\rangle\\
 & = \left\langle a(\contr(\Psi_1 * \Psi_2 , \Phi)) , \chi^* \otimes \chi^* \right\rangle.
\end{align*}
On the other hand, 
$$\left\langle c_G^* \circ \widehat{\PD}(\Phi), \Psi_1\otimes \Psi_2\right\rangle = \left\langle \widehat{\PD}(\Phi) , \Psi_1 *\Psi_2\right\rangle = \left\langle \contr(\Psi_1*\Psi_2, \Phi),\chi^*\right\rangle.$$
The theorem follows now from the fact that $\left\langle a(\phi),\chi^*\otimes \chi^*\right\rangle = \left\langle \phi , \chi^*\right\rangle$ for every $\phi\in \Val^G$ which is clear from the definition of $a$ and $\chi^*$.
\end{proof}

\subsection{Moment maps}

For each $r\geq 0$ we define the $r$th order moment map $M^r\colon \Area^G\rightarrow \Val^{ r,G}$ by
$$M^r(\Phi)(K) = \int_{S(V)} u^{ r} \; d\Phi(K,u).$$
Note that in particular  $M^0=\glob$. The reason why  moment maps are useful for us is that they connect the kinematic operator for area measures with the additive kinematic operators for tensor valuations.

\begin{proposition}\label{prop:tensorkin}
If $r_1, r_2$ are non-negative integers and $\Phi\in \Area^G$, then 
$$\left[ a^{r_1,r_2} \circ M^{r_1+r_2}\right](\Phi) =\left[ (M^{r_1}\otimes M^{r_2}) \circ A \right](\Phi).$$

\end{proposition}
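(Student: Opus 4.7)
The plan is to unwind both sides into iterated integrals over $G\times S(V)$ and match them by a change of variables $g\mapsto g^{-1}$, which is legitimate because the Haar measure on the compact group $G$ is inversion-invariant.

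First, I unwind the left-hand side by substituting the definition of $M^{r_1+r_2}$:
\begin{equation*}
a^{r_1,r_2}(M^{r_1+r_2}(\Phi))(K,L) = \int_G (\id^{\otimes r_1}\otimes g^{\otimes r_2})\int_{S(V)} u^{r_1+r_2}\, d\Phi(K+g^{-1}L,u)\, dg.
\end{equation*}
Identifying $u^{r_1+r_2}\in\Sym^{r_1+r_2}V$ with $u\otimes\cdots\otimes u\in V^{\otimes(r_1+r_2)}$, the linear map $\id^{\otimes r_1}\otimes g^{\otimes r_2}$ sends it to $u^{r_1}\otimes (gu)^{r_2}$ in $\Sym^{r_1}V\otimes\Sym^{r_2}V$. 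Moving the operator inside and applying Fubini yields
\begin{equation*}
a^{r_1,r_2}(M^{r_1+r_2}(\Phi))(K,L) = \int_G\int_{S(V)} u^{r_1}\otimes(gu)^{r_2}\, d\Phi(K+g^{-1}L,u)\, dg.
\end{equation*}

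Second, for the right-hand side, I write $A(\Phi) = \sum_k \Psi_k^{(1)}\otimes\Psi_k^{(2)}$ as a finite sum in $\Area^G\otimes\Area^G$. Theorem~\ref{thm:existence} (and the corollary defining $A$) gives, for all bounded Baire functions $\phi,\psi$ on $S(V)$,
\begin{equation*}
\sum_k\int\phi\, d\Psi_k^{(1)}(K,\cdot)\int\psi\, d\Psi_k^{(2)}(L,\cdot) = \int_G\int_{S(V)} \phi(u)\psi(g^{-1}u)\, d\Phi(K+gL,u)\, dg,
\end{equation*}
as one checks by taking $\phi=\mathbf{1}_U$, $\psi=\mathbf{1}_V$ and recovering the definition of $A$. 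Applying this equality componentwise to the scalar entries of the tensor-valued integrand $(u,v)\mapsto u^{r_1}\otimes v^{r_2}$ and reassembling gives
\begin{equation*}
(M^{r_1}\otimes M^{r_2})(A(\Phi))(K,L) = \int_G\int_{S(V)} u^{r_1}\otimes(g^{-1}u)^{r_2}\, d\Phi(K+gL,u)\, dg.
\end{equation*}

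Finally, I substitute $g\mapsto g^{-1}$ in the formula for the left-hand side; this sends $K+g^{-1}L$ to $K+gL$ and $(gu)^{r_2}$ to $(g^{-1}u)^{r_2}$, producing exactly the expression for the right-hand side. There is no substantive obstacle here: the argument is a chase of definitions, and the only bookkeeping step is the passage from the scalar Baire statement of Theorem~\ref{thm:existence} to a tensor-valued integrand, which is immediate by linearity in each tensor component.
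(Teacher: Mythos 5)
Your proof is correct and follows essentially the same route as the paper: unwind both sides via the definitions of $M^r$ and $a^{r_1,r_2}$, use the fact that $u^{r_1+r_2} = u^{r_1}\otimes u^{r_2}$ under $\operatorname{id}^{\otimes r_1}\otimes g^{\otimes r_2}$, and invoke inversion-invariance of the Haar measure to match the two expressions. The paper's version is more compressed (it performs the change of variables $g\mapsto g^{-1}$ silently and writes the result of applying the kinematic operator directly in the notation of Theorem~\ref{thm:existence}), whereas you spell out the intermediate steps and the passage from scalar Baire functions to tensor-valued integrands; both are sound.
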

\begin{proof}
Let $K,L\subset V$ be convex bodies. Using the notation of Theorem~\ref{thm:existence}, we obtain
$$\left[ (M^{r_1}\otimes M^{r_2}) \circ A\right](\Phi)(K,L)= \int_G \Phi(K+g^{-1} L, u^{r_1} \otimes (gu)^{r_2} ) \; dg.$$
On the other hand, since $u^{r_1} \otimes (gu)^{r_2} = \operatorname{id}^{\otimes r_1} \otimes g^{\otimes r_2}(u^{r_1} \otimes u^{r_2})$ and $u^{r_1} \otimes u^{r_2}= u^{r_1+r_2}$, we obtain
$$\int_G \Phi(K+g^{-1} L, u^{r_1} \otimes (gu)^{r_2} ) \; dg= \left[ a^{r_1,r_2}\circ   M^{r_1+r_2}\right]	(\Phi)(K,L).$$
\end{proof}

\subsection{Moment maps for unitary area measures} We consider now the case $V=\CC^n=\RR^{2n}$ with $G=U(n)$. 
 When restricted to the space of unitary area measures, both $M^0$ and $M^1$ have non-trivial kernels. The kernel of the latter map was determined by the author 
in \cite{wannerer13}. 
The second order moment map, however, turns out to be injective.

As in \cite{wannerer13} we denote by $(z_1,\ldots, z_n, \zeta_1,\ldots, \zeta_n)$ the canonical coordinates on $\CC^n \oplus \CC^n \cong T\CC^n$, $z_i= x_i + \ii y_i$ and $\zeta_i= \xi_i + \ii \eta_i$. 
The canonical complex structure on $\CC^n$, i.e.\ componentwise multiplication by $\ii $, is denoted by $J\colon \CC^n \to \CC^n$.  Moreover we denote the action of $J$ on $T\CC^n\cong \CC^n \oplus \CC^n$ by the same
letter. We let $e_i$ be the element of $\CC^n$ with coordinates $z_j=\delta_{ij}$, $i,j=1,\ldots,n$ and put $e_{\bar i}:= J e_i$.

\begin{theorem}
 The map $M^2\colon \Area^{U(n)}\rightarrow \Val^{2, U(n)}$ is injective.
\end{theorem}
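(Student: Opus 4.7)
The plan is to reduce the injectivity of $M^2$, via a trace identity, to the linear independence of finitely many specific tensor valuations, and then to verify this by evaluating on complex-subspace balls.

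For the reduction, I observe that $\operatorname{tr}(u^{\otimes 2}) = |u|^2 = 1$ on $S(V)$, so contracting $M^2(\Phi)$ with the Euclidean metric $g \in \Sym^2 V$ returns $\glob(\Phi) = M^0(\Phi)$. Hence $\ker M^2 \subseteq \ker \glob$, and by \eqref{eq:glob_DN} every $\Phi \in \ker M^2$ is a linear combination of the $N_{k,q}$. Since $M^2$ preserves the degree of homogeneity, the task reduces to showing that for each fixed $k$ the family
\[
\{M^2(N_{k,q}) : \max\{0,k-n+1\}\leq q < k/2\}
\]
is linearly independent in $\Val^{2,U(n)}_k$.

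For the verification, I would evaluate $M^2(N_{k,q})$ on the complex-subspace balls $K = B(E)$, where $E \subset \CC^n$ is a complex linear subspace of complex dimension $m$, and vary $m$ over a suitable range. By $U(m)\times U(n-m)$-covariance the tensor $M^2(N_{k,q})(B(E))$ lies in the two-dimensional space spanned by the metrics $g_E$ and $g_{E^\perp}$; the trace-free condition $\glob(N_{k,q})(B(E))=0$ further restricts it to a one-parameter family, so $M^2(N_{k,q})(B(E))$ is determined by a single scalar $c_{k,q}(m)$. The $c_{k,q}(m)$ can be computed from the explicit differential forms $\beta_{k,q}$, $\gamma_{k,q}$ of \cite{wannerer13}, and one then checks non-singularity of the resulting matrix in $m$ and $q$.

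The main obstacle will be this explicit matrix calculation. A clean way to organize it is to exploit the recursive action of $\hat s$ and $\hat t$ on the $N_{k,q}$: since $M^2$ is defined via integration over the normal cycle, the compatibility $M^2(\phi * \Psi) = \phi * M^2(\Psi)$ holds with respect to the module product on area measures, and one can use the explicit formulas for $\hat s * N_{k,q}$ and $\hat t * N_{k,q}$ from \cite{wannerer13} to reduce inductively in $k$ to a small number of low-degree base cases, where direct integration against the representing forms completes the proof.
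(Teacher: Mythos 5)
Your reduction $\ker M^2 \subseteq \ker \glob$ via contraction with the metric tensor is exactly the paper's first step, and the reduction to the linear independence of the $\{M^2(N_{k,q})\}$ for fixed degree $k$ is also correct. However, your choice of testing bodies is where the proposal diverges from a workable argument, and the gap it leaves is substantial.

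The paper does not evaluate on balls in \emph{complex} subspaces. It evaluates on balls $B(E_{k,q})$ in the distinguished \emph{real} subspaces $E_{k,q} = \CC^q \oplus \RR^{k-2q}$ of Bernig--Fu. This choice is decisive: for $K \subset E_{k_0,q_0}$ one has $N_{k,q}(K,\cdot) = 0$ unless $(k,q) = (k_0,q_0)$, so evaluating $M^2(\Phi)$ on $B(E_{k_0,q_0})$ directly isolates the single coefficient $c_{k_0,q_0}$. One is then left with a single explicit integral, $\int_{B(E_{k,q})\times S(E_{k,q}^\perp)} \xi_n^2(\gamma_{k,q}-\beta_{k,q})$, which is shown to be nonzero. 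There is no matrix to invert. Your proposal replaces this with balls in complex subspaces of varying dimension $m$, which do \emph{not} kill all but one $N_{k,q}$. You would therefore genuinely need the $|\{q\}| \times |\{m\}|$ matrix $(c_{k,q}(m))$ and a proof of its nonsingularity. You acknowledge that this calculation is the main obstacle and do not carry it out; but this is not a routine point that can be deferred, since for $N_{k,q}$ one always has $k - 2q \geq 1$ (so $E_{k,q}$ is never complex), and the entries $c_{k,q}(m)$ come from restrictions of $\beta_{k,q}$, $\gamma_{k,q}$ to complex subspaces that mix contributions in a way the paper's test bodies deliberately avoid. In short, the clever part of the argument — recognizing that the totally-real component of $E_{k,q}$ is what lets one pick off one coefficient at a time — is precisely what is missing from your proposal, and without it the proposal is a plan rather than a proof. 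The closing remark about using $\hat s * N_{k,q}$, $\hat t * N_{k,q}$ to induct downward in $k$ has a similar status: the compatibility $M^2(\phi * \Psi) = \phi * M^2(\Psi)$ is fine, but the recursion would only reduce the problem if one could show it propagates linear independence, which again is not established.
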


\begin{proof}
 Suppose the unitary area measure $\Phi$ satisfies  $M^2(\Phi)=0$. Denoting by
$Q=\sum_{i=1}^n \left(e_i^2 + e_{\bar i}^2\right)$ the metric tensor, we have
$$0 = (M^2(\Phi)(K),Q ) = \glob(\Phi)(K)$$
for every convex body $K\subset \CC^n$.
Hence, by \eqref{eq:glob_DN}, there exist numbers $c_{k,q}$ such that
$$\Phi= \sum_{k,q} c_{k,q} N_{k,q}.$$ 
As in \cite{bernig_fu11}, we denote by $E_{k,q}= \CC^q\oplus \RR^{k-2q}$ a two-parameter family of distinguished real subspaces of $\CC^n$. If $K\subset E_{k,q}$ is a convex body, then
$$M^2(\Phi)(K)= c_{k,q} M^2(N_{k,q})(K).$$
In particular, if $K$ equals the unit ball in $E_{k,q}$,  $K=B(E_{k,q})$, then
$$(M^2(\Phi)(K), e_n^2)= c_{k,q} \frac{2(n-k+q)}{2n-k} \int_{B(E_{k,q})\times S(E_{k,q}^\perp)} \xi_n^2 (\gamma_{k,q}-\beta_{k,q}).$$
The theorem will be proved if we can show  that the above integral is not zero. Denoting by $\iota \colon E_{k,q}\oplus E_{k,q}^\perp\to \CC^n\oplus \CC^n$ the inclusion map, we obtain
$$\iota^* \theta_0 =\sum_{i=k-q+1}^n d\xi_i\wedge d\eta_i,\qquad    \iota^* \theta_1 =\sum_{i=q+1}^{k-q} dx_i\wedge d\eta_i,\qquad    \iota^* \theta_2 =\sum_{i=1}^{q} dx_i\wedge dy_i,$$
and 
$$ \iota^* \beta =-\sum_{i=q+1}^{k-q} \eta_i dx_i,\qquad \text{and}\qquad     \iota^* \gamma =\sum_{i=k-q+1}^{n} \xi_i d\eta_i -\eta_i d\xi_i.$$
Consequently,
$$\iota^* \beta_{k,q} = \frac{1}{(k-2q)\omega_{2n-k}} \sum_{i=q+1}^{k-q} \eta_i \partial \eta_i  \lrcorner  \vol_{E_{k,q}\oplus E_{k,q}^\perp}$$
and
$$\iota^* \gamma_{k,q} = \frac{1}{2(n-k+q)\omega_{2n-k}} \sum_{i=k-q+1}^{n} \left(\xi_i \partial \xi_i +  \eta_i \partial \eta_i \right) \lrcorner  \vol_{E_{k,q}\oplus E_{k,q}^\perp}.$$

For every bounded Borel function $f\colon S^{n-1}\to \RR$ we have the identity
\begin{equation}
 \label{eq:contraction_formula}
\int_{S^{n-1}} f(x) x_i \; \partial x_i\lrcorner \vol_{\RR^n} = \int_{S^{n-1}} f(x) x_i^2 \; d\calH^{n-1}(x),\qquad i=1,\ldots, n.
\end{equation}
Here $\calH^{n-1}$ denotes the $(n-1)$-dimensional Hausdorff measure. Moreover, a calculation shows that 
\begin{equation}
 \label{eq:moments}
\int_{S^{n-1}} x_i^2 x_j^2 \; d\calH^{n-1}(x)= \left\{ \begin{array}{ll}
                                                          \frac{\omega_n}{n+2}, &  i\neq j;\\
							  \frac{3\omega_n}{n+2}, &  i=j,
                                                        \end{array}
						\right.
\end{equation}
see \cite{folland99}, Chapter~2, Exercise~63.
Using \eqref{eq:contraction_formula} and \eqref{eq:moments}, we obtain
$$\int_{B(E_{k,q})\times S(E_{k,q}^\perp)} \xi_n^2 (\gamma_{k,q}-\beta_{k,q})= \frac{\omega_k}{(n-k+q)(2n-k+2)}$$
which finishes the proof.

\end{proof}

\section{Proof of \texorpdfstring{$\bar v^2=0$}{v2=0}} \label{sec:vbar2}

We define three differential forms with values in $\Sym \CC^n$ which are invariant under the $U(n)$-action \eqref{eq:G-action}. We define the $0$-form 
$$w= \sum_{i=1}^n e_i \xi_i + e_{\bar i} \eta_i$$
and the $1$-forms
$$\nu_0= \sum_{i=1}^n e_i d\xi_i + e_{\bar i} d\eta_i \qquad \text{and}\qquad \nu_1= \sum_{i=1}^n e_i dx_i + e_{\bar i} dy_i.$$
Observe that $dw = \nu_0$, $$J ^*w= \sum_{i=1}^n e_{\bar i} \xi_i - e_i \eta_i,\qquad J^*\nu_0= \sum_{i=1}^n e_{\bar i} d\xi_i - e_i d\eta_i,\qquad J^*\nu_1= \sum_{i=1}^n e_{\bar i} dx_i - e_i dy_i,$$
and that also $J^*w$, $J^*\nu_0$, and  $J^*\nu_1$ are $U(n)$-invariant.

Using these forms, we define a tensor  valuation $\Phi_1\in \Val^{2,U(n)}$ by
$$\Phi_1(K) = \int_{N(K)} J^*w \; \nu_1 \wedge \theta_2^{n-1}.$$
Note that $\Phi_1$ is homogeneous of degree $2n-1$.

\begin{proposition}\label{lem:sym4}
If $n\geq 2$, then
$$(M^4(B_{2,0}), \Phi_1 * \Phi_1)=  (M^4(\Gamma_{2,1}), \Phi_1 * \Phi_1)= 0.$$
If $n \geq 3$, then also
$$(M^4(\Gamma_{2,0}), \Phi_1 * \Phi_1)= 0.$$
\end{proposition}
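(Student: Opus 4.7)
The plan is to use Proposition~\ref{lem:poincare} componentwise (on $\Sym^4\CC^n$-valued forms) to write each pairing as
\[
(M^4(\Psi),\Phi_1*\Phi_1) \;=\; \frac{1}{\omega_{2n}} \int_{B(\CC^n)\times S(\CC^n)} \omega_\Psi \wedge \sum_{i,j,k,l} w^i w^j w^k w^l\, D\omega_{\Phi_1*\Phi_1}^{ijkl},
\]
with $\omega_\Psi \in \{\beta_{2,0},\gamma_{2,0},\gamma_{2,1}\}$, and to show that the inner tensor-sum already vanishes identically as a form on $S\CC^n$. Here the component form is
\[
\omega_{\Phi_1*\Phi_1}^{ijkl} = \frac{1}{4!}\sum_{\pi\in S_4}\, *_1^{-1}\!\bigl(*_1\omega_{\Phi_1}^{i_{\pi(1)}i_{\pi(2)}} \wedge *_1 D\omega_{\Phi_1}^{i_{\pi(3)}i_{\pi(4)}}\bigr),
\]
obtained from \eqref{eq:convproduct} applied componentwise, with $\omega_{\Phi_1}^{ab} = \tfrac12\bigl((J^*w)^a\nu_1^b + (J^*w)^b\nu_1^a\bigr)\wedge\theta_2^{n-1}$.

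The central observation is the pointwise orthogonality $\langle w,J^*w\rangle \equiv 0$ on the sphere bundle, since the position vector $w$ is always perpendicular to its $J$-image. This forces
\[
\sum_{a,b} w^a w^b\, \omega_{\Phi_1}^{ab} = \langle w, J^*w\rangle\cdot\langle w,\nu_1\rangle\wedge\theta_2^{n-1} = 0.
\]
Since $*_1$ and $\wedge$ are linear over multiplication by 0-forms, pulling the $w^i w^j w^k w^l$ factors inside $*_1$ shows that in every permutation $\pi$ the contracted convolution splits as a product whose first or second bracket contains $\sum w^a w^b \omega_{\Phi_1}^{ab} = 0$. Hence $\sum w^{ijkl}\omega_{\Phi_1*\Phi_1}^{ijkl}=0$, and the Leibniz rule for the Rumin differential against 0-forms gives
\[
\sum w^{ijkl} D\omega_{\Phi_1*\Phi_1}^{ijkl} = -\sum d(w^i w^j w^k w^l)\wedge \omega_{\Phi_1*\Phi_1}^{ijkl} = -4\sum \nu_0^i w^j w^k w^l\, \omega_{\Phi_1*\Phi_1}^{ijkl}.
\]

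To finish, I then show $\sum \nu_0^i w^j w^k w^l\, \omega_{\Phi_1*\Phi_1}^{ijkl}=0$ by the same case analysis over the 24 permutations. Configurations in which both $w$'s of some pair sit in the same convolution factor are killed by $\sum w^a w^b \omega_{\Phi_1}^{ab}=0$; configurations in which $\nu_0$ sits in the differentiated factor are killed because the undifferentiated factor collects two $w$'s. The only surviving configurations place $\nu_0$ and one $w$ in the first factor and two $w$'s in the second; using Leibniz once more, their total contribution reduces to a scalar multiple of $*_1^{-1}(*_1\omega_A\wedge *_1\omega_A)$, where $\omega_A = \gamma\wedge\alpha\wedge\theta_2^{n-1}$ with $\gamma = \langle\nu_0,J^*w\rangle$ and $\alpha = \langle w,\nu_1\rangle$ the contact form. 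A direct Hodge-star calculation shows $*_1\omega_A$ factors as $\gamma$ wedged with a horizontal 1-form, so $*_1\omega_A\wedge *_1\omega_A$ contains the factor $\gamma^2 = 0$ and vanishes. Thus $\sum w^{ijkl}D\omega_{\Phi_1*\Phi_1}^{ijkl}\equiv 0$ on $S\CC^n$, and all three pairings vanish.

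The main obstacle will be justifying the Leibniz rule $D(f\omega) = df\wedge\omega + f D\omega$ for the Rumin differential on our forms, and the careful Hodge-star bookkeeping needed to recognize $*_1\omega_A$ as a $\gamma\wedge(\text{horizontal 1-form})$ expression; once these are verified, the vanishing reduces to the exterior-algebra identity $\gamma^2=0$ and makes no direct use of the specific form of $\beta_{2,0},\gamma_{2,0},\gamma_{2,1}$ (the restriction of the proposition to these forms is only that higher-degree area measures pair trivially with $\Phi_1*\Phi_1$ for dimensional reasons).
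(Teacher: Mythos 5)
Your key observation, $\langle w, J^*w\rangle \equiv 0$ on the sphere bundle, is correct and does capture the geometric reason for the vanishing (the paper effectively exploits the same orthogonality at the point $p=(0,e_1)$, where $w=e_1$ and $J^*w=e_{\bar 1}$). However, the proof has a fatal gap at the step you yourself flag as the main obstacle: the Rumin differential $D$ is a \emph{second-order} operator and does \emph{not} satisfy the Leibniz rule $D(f\omega)=df\wedge\omega + fD\omega$. Recall from the Appendix that $D\omega=d(\omega+\alpha\wedge Q(\omega))$, where $Q(\omega)$ is the unique form with $T\lrcorner Q(\omega)=0$ making the expression a multiple of $\alpha$. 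Writing $Q(f\omega)=fQ(\omega)+R$, the defining condition for $R$ is that $df\wedge\omega + d\alpha\wedge R$ be a multiple of $\alpha$; unless $df\wedge\omega$ is already $\alpha$-horizontal, $R\neq 0$ and contributes nontrivial extra terms to $D(f\omega)$. In your situation $f=w^iw^jw^kw^l$, so $df$ involves the vertical form $\nu_0$, and there is no reason for $df\wedge\omega_{\Phi_1*\Phi_1}^{ijkl}$ to be a multiple of $\alpha$. Hence the passage from $\sum w^{ijkl}\omega^{ijkl}_{\Phi_1*\Phi_1}=0$ to $\sum w^{ijkl}D\omega^{ijkl}_{\Phi_1*\Phi_1}=-\sum d(w^{ijkl})\wedge\omega^{ijkl}_{\Phi_1*\Phi_1}$ is unjustified, and so is the second Leibniz application reducing the surviving configurations to $*_1^{-1}(*_1\omega_A\wedge *_1\omega_A)$. (In that last step, note also that the convolution formula \eqref{eq:convproduct} places a Rumin differential on the second factor; a form such as $\omega_A=\gamma\wedge\alpha\wedge\theta_2^{n-1}$ is a multiple of $\alpha$, so $D\omega_A=0$, and the product would vanish trivially --- but this cannot be reached by an invalid Leibniz manipulation.)

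The paper circumvents this issue entirely by a pointwise computation: the integrand $\bigl(*_1^{-1}(*_1\omega_1\wedge *_1 D\omega_2), D\omega\bigr)$ is a $U(n)$-invariant scalar form, so it suffices to evaluate it at $p=(0,e_1)$. There the authors read off the Rumin differentials from the Appendix, compute $*_1\omega_1\equiv -(n-1)!\,e_{\bar 1}^2\,dx_1$ modulo irrelevant terms, observe that $D\omega$ is a multiple of $e_1^2$ at $p$ so only the $e_1^2$-components of $*_1 D\omega_1$ survive, and then note that the resulting $e_1^2 e_{\bar 1}^2$-component of the convolution already carries a factor $d\eta_1$, which also divides every $e_1^2 e_{\bar 1}^2$-term of $D\omega$. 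To salvage your approach you would need either a genuine Leibniz-type correction formula for $D$ under multiplication by the coordinate functions $w^a$ (i.e.\ control of $Q(w^a\omega)-w^a Q(\omega)$), or to fall back on evaluating the explicit Rumin differentials at a point, as the paper does.
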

\begin{proof}All we have to do is to plug the Rumin differentials computed in the Appendix into \eqref{eq:poincare} and \eqref{eq:convproduct}. 
This computation, which at first sight looks a bit lengthy,
is simplified by the following two facts. 

If $\omega\in \Omega^{2n-1}(S\CC^n, \Sym^4\CC^n)$ and $\omega_1,\omega_2\in \Omega^{2n-1}(S\CC^n, \Sym^2\CC^n)$ are invariant with respect to the $U(n)$-action \eqref{eq:G-action}, then the form 
\begin{equation}\label{eq:poincare_form}( *_1^{-1}( *_1 \omega_1 \wedge *_1D\omega_2),D\omega)\end{equation}
is $U(n)$-invariant and as such already determined by its value at a single point $p\in S\CC^n$. In the following, we choose $p=(0,e_1)$. 

Moreover, the assertion that
$$\text{adding multiples of }\alpha \text{ to } \omega_1 \text{ does not change } \eqref{eq:poincare_form}$$
follows immediately from the fact that $D\omega$ is a multiple of $\alpha$.

For the rest of the proof we put $\omega_1=\omega_2=J^*w \; \nu_1 \wedge \theta_2^{n-2}$. The form $\omega$ will be either 
$$  w^4 \beta_{2,0}, \qquad w^4 \gamma_{2,0},\qquad \text{or}\qquad w^4\gamma_{2,1}.$$
At the point $p$  we have $\alpha=dx_1$ and 
\begin{equation}\label{eq:staromega}
 *_1\omega_1 \equiv  -(n-1)! e_{\bar 1}^2 dx_1
\end{equation}
up to forms which are not multiples $dx_1$.
To pick out only the relevant terms of $*_1D\omega_1$, observe that $ D\omega$ is a multiple of $ e_1^2$ at the point $p$. Since $ e_1^2$ does not appear
 in \eqref{eq:staromega}, only those terms in $*_1D\omega_1$ contribute to \eqref{eq:poincare_form} 
which are multiples of $ e_1^2$. Thus,
\begin{equation}\label{eq:starDomega_1} *_1 D\omega_1 \equiv 2 (n-1)! e_1^2 dy_1 d\eta_1.\end{equation}
 It follows from \eqref{eq:staromega} and \eqref{eq:starDomega_1} that the relevant part of $*_1^{-1}(*_1\omega_1 \wedge *_1D\omega_1)$ equals 
$$2(n-1)!^2 e_1^2 e_{\bar 1}^2 dx_2dy_2 \cdots dx_ndy_n d\eta_1.$$
Since every term in $D\omega$ which is a multiple of $ e_1^2e_{\bar 1}^2$ is a multiple of $d\eta_1$ as well, this immediatly implies that
\eqref{eq:poincare_form} vanishes. 
\end{proof}

\begin{lemma}
 If $n\geq 2$, then
\begin{equation}\label{eq:poincare_G10}(M^2(\Gamma_{1,0}),\Phi_1) = 0 \end{equation}
and 
\begin{equation}\label{eq:poincare_B10}
 (M^2(B_{1,0}),\Phi_1) = \frac{4 n! \omega_{2n}}{\omega_{2n-1}}.
\end{equation}
\end{lemma}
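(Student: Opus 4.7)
The plan is to specialize Proposition~\ref{lem:poincare} with $k=1$. Since $M^2(B_{1,0})$ and $M^2(\Gamma_{1,0})$ are $1$-homogeneous tensor valuations represented by $w^2\beta_{1,0}$ and $w^2\gamma_{1,0}$ (the forms with values in $\Sym^2\CC^n$ obtained by multiplying the defining forms of the area measures $B_{1,0}$, $\Gamma_{1,0}$ by $w^2$), while $\Phi_1$ is $(2n-1)$-homogeneous and represented by $\omega'=J^*w\cdot\nu_1\wedge\theta_2^{n-1}$, Proposition~\ref{lem:poincare} gives
\[
(M^2(\Phi),\Phi_1) \;=\; -\frac{1}{\omega_{2n}}\int_{B(\CC^n)\times S(\CC^n)} \bigl(w^2\phi_{1,0}\,,\,D\omega'\bigr),
\]
where $\phi_{1,0}\in\{\beta_{1,0},\gamma_{1,0}\}$. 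The Rumin differential $D\omega'$ is the one already computed in the appendix and used in the proof of Proposition~\ref{lem:sym4}.

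All forms in the integrand are translation-invariant, so the $B(\CC^n)$ factor contributes $\omega_{2n}$. The remaining $(2n-1)$-form on $S(\CC^n)$ is $U(n)$-invariant with respect to the action \eqref{eq:G-action} (both $w^2\phi_{1,0}$ and $D\omega'$ are, and the $\Sym^2\CC^n$ pairing is $U(n)$-equivariant), so it is a constant multiple of the volume form on $S^{2n-1}$ and is determined by its value at a single point. Following the strategy of Proposition~\ref{lem:sym4}, I would evaluate at $p=(0,e_1)$, where $\alpha=dx_1$, $w|_p=e_1$, $J^*w|_p=e_{\bar 1}$. Only components of $D\omega'$ that do not contain $dx_1$ contribute to the wedge with $w^2\phi_{1,0}$; moreover, among the $\Sym^2\CC^n$-coefficients of $D\omega'$ at $p$, only the $e_1^2$-coefficient pairs nontrivially with $w^2|_p=e_1^2$ via the symmetric contraction \eqref{eq:innerprodcontr}. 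These are precisely the pieces identified in the proof of Proposition~\ref{lem:sym4}, where they were shown to live in the slot $dy_1\wedge d\eta_1\wedge dx_2\wedge dy_2\wedge\cdots\wedge dx_n\wedge dy_n$.

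Plugging in the explicit forms (up to normalizing constants read off from the restrictions $\iota^*\beta_{k,q}$, $\iota^*\gamma_{k,q}$ recorded in the proof of Theorem~5.1), $\beta_{1,0}$ is proportional to $\beta\wedge\theta_0^{n-1}$ and $\gamma_{1,0}$ to $\gamma\wedge\theta_1\wedge\theta_0^{n-2}$. At $p$, $\beta|_p=dy_1$ and $\theta_0^{n-1}|_p$ expands to $(n-1)!\,d\xi_2\wedge d\eta_2\wedge\cdots\wedge d\xi_n\wedge d\eta_n$, matching the required slot exactly. In contrast, the horizontal factor of $\gamma_{1,0}|_p$ forces at least one $d\xi_i$ or $d\eta_j$ on the base-direction slots reserved for $dx_i,dy_i$ in $D\omega'$, so the wedge at $p$ vanishes identically, proving $(M^2(\Gamma_{1,0}),\Phi_1)=0$. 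For $B_{1,0}$, the surviving wedge at $p$ evaluates, after accounting for the factor $(n-1)!$ from $\theta_0^{n-1}$, the $\Sym^2$-contraction $\contr(e_1^2,e_1^2)$, the numerical coefficient in front of the $d\eta_1$-piece of $D\omega'$, and the normalization of $\beta_{1,0}$ (which produces the $\omega_{2n-1}^{-1}$), to the claimed value $4n!\omega_{2n}/\omega_{2n-1}$.

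The main obstacle is purely bookkeeping: assembling signs, combinatorial multiplicities, and the precise normalization constants for $\beta_{1,0}$ and $\gamma_{1,0}$ to land exactly on the numerical factor $4n!\omega_{2n}/\omega_{2n-1}$. Conceptually, the vanishing statement for $\Gamma_{1,0}$ is transparent once one notes the incompatibility of the horizontal support of $\gamma_{1,0}|_p$ with the fixed $dy_1\wedge d\eta_1$-structure of the relevant piece of $D\omega'$ identified in Proposition~\ref{lem:sym4}; only the extraction of the correct scalar from the $B_{1,0}$ pairing requires attention.
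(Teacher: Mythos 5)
Your proposal follows the same route as the paper's proof: apply Proposition~\ref{lem:poincare} with $k=1$ (and $n\to 2n$), use the Rumin differential $D(J^*w\,\nu_1\wedge\theta_2^{n-1})$ from the appendix, invoke $U(n)$-invariance to reduce to the point $p=(0,e_1)$, and observe that only the $e_1^2$-coefficient of $D\omega'$ can pair with $w^2\beta_{1,0}$ or $w^2\gamma_{1,0}$, since $w^2|_p=e_1^2$. The paper then records that, modulo terms not proportional to $e_1^2$, $D\omega'\equiv 2(n-1)!\,e_1^2\,dx_1\,dx_2\,dy_2\cdots dx_n\,dy_n\,d\eta_1$ at $p$, and reads off both identities. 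Your identification of $\beta_{1,0}\propto\beta\wedge\theta_0^{n-1}$ hitting exactly the complementary slots, and $\gamma_{1,0}\propto\gamma\wedge\theta_1\wedge\theta_0^{n-2}$ failing to, is exactly the intended mechanism; the cleanest way to phrase the vanishing is that $\gamma_{1,0}|_p$ carries a $d\eta_1$ factor (and a $dx_i$ or $dy_i$ with $i\geq 2$ from $\theta_1$), each of which already appears in the relevant term of $D\omega'$, while $dy_1$ is missing --- your sentence about ``base-direction slots'' is a slightly garbled version of this but points at the same collision. Overall this is the paper's argument, carried out to the same level of detail.
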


\begin{proof}
Again we are going to use  Lemma~\ref{lem:poincare} and the Rumin differentials computed in the Appendix. By $U(n)$-invariance it will be sufficient to compute 
$(\omega , D\omega')$ at the point $p=(0,e_1)$. If $\omega=w^2 \beta_{1,0}$ or $\omega=w^2 \gamma_{1,0}$, then at the point $p$ the form $\omega$ is a multiple of $ e_1^2$. Hence only the terms of $D\omega'$
which are multiples of $ e_1^2$ are relevant. At $p$ we have
$$D(Jw \; \nu \wedge \theta_2^{n-1}) \equiv 2 (n-1)! e_1^2 dx_1 dx_2 dy_2 \cdots dx_n dy_n d\eta_1$$
up to terms which are not multiples of $ e_1^2$. 
From this we immediately obtain \eqref{eq:poincare_G10} and \eqref{eq:poincare_B10}.
\end{proof}

\begin{theorem}\label{thm:vbar} $\bar v^2=0$.
\end{theorem}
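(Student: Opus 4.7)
The plan is to transport $\bar v$ into the world of tensor valuations via $M^2$ and $\widehat{\PD}(\Phi_1)$, then use Theorem~B and the moment-map identity of Proposition~\ref{prop:tensorkin} to rewrite $\langle \bar v^2,\Psi\rangle$ as a Poincar\'e pairing involving $\Phi_1 * \Phi_1$, which Proposition~\ref{lem:sym4} has been engineered to annihilate.

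First I would identify $\bar v\in\Area^{U(n)*}$, up to a nonzero scalar, with the functional $\Lambda\colon \Psi\mapsto (M^2(\Psi),\Phi_1)$. Since the Poincar\'e pairing of tensor valuations pairs degrees $k$ and $2n-k$ --- because $\contr$ turns rank-matching tensor valuations of degrees $k$ and $l$ into a scalar valuation of degree $k+l-2n$ while $\chi^*$ is supported in degree zero --- and since $\Phi_1$ has degree $2n-1$, the functional $\Lambda$ must vanish on every basis measure of degree different from~$1$. The only degree-one unitary area measures are $B_{1,0}$ and $\Gamma_{1,0}$, for which \eqref{eq:poincare_G10} and \eqref{eq:poincare_B10} give $\Lambda(\Gamma_{1,0})=0$ and $\Lambda(B_{1,0}) = \tfrac{4n!\omega_{2n}}{\omega_{2n-1}}\neq 0$. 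Hence $\Lambda$ is a nonzero multiple of $B_{1,0}^*$, so there exists a nonzero constant $c$ with $\bar v = c\,\widehat{\PD}(\Phi_1)\circ M^2$.

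Next I would unfold $\bar v^2$. From the definition of the coproduct and the identification above,
$$\langle \bar v^2,\Psi\rangle = \langle \bar v\otimes \bar v,\,A(\Psi)\rangle = c^2\,\langle \widehat{\PD}(\Phi_1)\otimes \widehat{\PD}(\Phi_1),\,(M^2\otimes M^2)(A(\Psi))\rangle.$$
Proposition~\ref{prop:tensorkin} rewrites $(M^2\otimes M^2)\circ A$ as $a^{2,2}\circ M^4$, and Theorem~B --- together with the symmetry of the Poincar\'e pairing on even tensor valuations (all valuations in $\Val^{U(n)}$ being even) --- gives the adjoint identity
$$\langle \widehat{\PD}(\Psi_1)\otimes \widehat{\PD}(\Psi_2),\,a^{r_1,r_2}(\Phi)\rangle = (\Phi,\,\Psi_1 * \Psi_2).$$
Chaining these two reductions yields the key formula $\langle \bar v^2,\Psi\rangle = c^2\,(M^4(\Psi),\Phi_1 * \Phi_1)$.

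To conclude I would invoke Proposition~\ref{lem:sym4}. The convolution $\Phi_1 * \Phi_1$ is a rank-four tensor valuation of degree $2(2n-1)-2n = 2n-2$, so by the same dimensional bookkeeping the pairing $(M^4(\Psi),\Phi_1 * \Phi_1)$ vanishes unless $\Psi$ has degree two. For $n\geq 2$, every degree-two unitary area measure lies in the span of $B_{2,0}$, $\Gamma_{2,1}$, and (only when $n\geq 3$) $\Gamma_{2,0}$, and Proposition~\ref{lem:sym4} provides the required vanishing on each of these. Therefore $\langle \bar v^2,\Psi\rangle = 0$ for every $\Psi\in\Area^{U(n)}$, i.e.\ $\bar v^2=0$. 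The genuine difficulty of the whole theorem is concentrated entirely in Proposition~\ref{lem:sym4}, whose proof in turn rests on explicit Rumin-differential computations at the basepoint $(0,e_1)\in S\CC^n$; the argument above is then purely formal bookkeeping within the algebra-coalgebra framework developed in the earlier sections.
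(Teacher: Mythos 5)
Your proof is correct and follows essentially the same route as the paper: it uses the same tensor valuation $\Phi_1$, the same key pairings $(M^2(\Gamma_{1,0}),\Phi_1)=0$ and $(M^2(B_{1,0}),\Phi_1)\neq 0$, the moment-map identity of Proposition~\ref{prop:tensorkin}, the ftaig of Theorem~\ref{thm:ftaig}, and the vanishing of Proposition~\ref{lem:sym4}. The only difference is organizational --- you first identify $\bar v$ up to a nonzero scalar with $\widehat{\PD}(\Phi_1)\circ M^2$ and then compute $\langle\bar v^2,\Psi\rangle$ directly, whereas the paper phrases the same computation as extracting and killing the coefficient $c_1^\Psi$ of $B_{1,0}\otimes B_{1,0}$ in $A(\Psi)$ --- but the mathematical content is identical.
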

\begin{proof}
 If $\Psi\in \{B_{2,0},\Gamma_{2,0}, \Gamma_{2,1}\}$, then there exist constants $c_{1}^\Psi$, $c_{2}^\Psi$, and, $c_{3}^\Psi$ such that 
$$A(\Psi)= c_{1}^\Psi B_{1,0}\otimes B_{1,0} + c_{2}^\Psi (B_{1,0}\otimes \Gamma_{1,0} + \Gamma_{1,0}\otimes B_{1,0}) + c_{3}^\Psi \Gamma_{1,0}\otimes \Gamma_{1,0}.$$ 
By the definition of the kinematic product, $\bar v^2=0$ is equivalent to $c^\Psi_{1}=0$.
The ftaig for tensor valuations (Theorem~\ref{thm:ftaig}) and Proposition~\ref{lem:sym4} imply that
$$\left\langle a^{2,2}\circ M^4(\Psi), \widehat{\PD}(\Phi_1)\otimes \widehat{\PD}(\Phi_1)\right\rangle= \left\langle \Phi_1 * \Phi_1, \widehat{\PD}(M^4(\Psi))\right\rangle = 0.$$
On the other hand, by Proposition~\ref{prop:tensorkin} and \eqref{eq:poincare_G10}, we have
$$ \left\langle a^{2,2}\circ M^4(\Psi), \widehat{\PD}(\Phi_1)\otimes \widehat{\PD}(\Phi_1)\right\rangle =\left\langle M^2\otimes M^2 \circ A(\Psi), \widehat{\PD}(\Phi_1)\otimes \widehat{\PD}(\Phi_1)\right\rangle = c_1^\Psi (M^2(B_{1,0}),\Phi_1)^2.$$
From \eqref{eq:poincare_B10} we conclude that $c_1^\Psi=0$. 
\end{proof}

\begin{proof}[Conclusion of the proof of Theorem~A]
 Let $h$ be the unique algebra homomorphism 
$$h\colon \RR[s,t,v]\to \Area^{U(n)*}$$
 determined by $t\mapsto \bar t$, $s\mapsto \bar s$, $v \mapsto \bar v$. It follows from Corollary~\ref{cor:subalgebra}, Lemma~\ref{lem:relations} and Theorem~\ref{thm:vbar}
that $h$ descends to an algebra homomorphism $\bar h \colon \RR[s,t,v]/I_n\rightarrow \Area^{U(n)*}$. By Proposition~\ref{prop:generators}, $\bar h$ is surjective. The theorem will be proved if we can show that
\begin{equation}\label{eq:dimension} \dim  \RR[s,t,v]/I_n = \dim \Area^{U(n)*}.\end{equation}

To this end let $M$ be the module given by the action of $\RR[s,t]$ on $\RR[s,t,v]$, let $S_n$ be the submodule generated by 
$$f_{n+1}(s,t), \quad f_{n+2}(s,t),\quad f_{n+1}(s,t)v, \quad f_{n+2}(s,t)v,\quad p_{n}(s,t)-q_{n-1}(s,t)v, \quad \text{and}\quad p_n(s,t)v,$$
 and let $T$ be the 
submodule generated by $\{v^k\colon k\geq 2\}$. Then $I_n=S_n + T$, $S_n\cap T=\{0\}$, and 
$$ M/I_n \cong (M/T ) / ((S_n+T)/T) \cong (M/T) / (S_n/(S_n\cap T)) =(M/T)/S_n.$$
Since 
$$M/T\cong \RR[s,t]\oplus \RR[s,t],$$
  \eqref{eq:dimension} follows now from Theorem~4.3 of \cite{wannerer13}.

\end{proof}

\section{Explicit local kinematic formulas} \label{sec:explicit}

In this final section we want to demonstrate how our main theorem can be used to derive explicit kinematic formulas. 

\begin{lemma}\label{lem:BB0}
 $$B_{k,q}^* * B^*_{k',q'}=0.$$
\end{lemma}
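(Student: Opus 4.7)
The proof should be essentially immediate once the main theorem is in hand. The plan is to express both dual basis elements $B_{k,q}^*$ and $B_{k',q'}^*$ as multiples of $\bar v$ in the algebra $\Area^{U(n)*}$, and then invoke the relation $\bar v^2 = 0$ established in Theorem~\ref{thm:vbar}.

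More concretely, first I would apply Lemma~\ref{lem:Bideal}, which guarantees that for each of the basis elements $B_{k,q}^*$ there exists a polynomial $\psi_{k,q}(\bar s, \bar t)$ in the two commuting generators $\bar s$ and $\bar t$ such that
\begin{equation*}
B_{k,q}^* = \psi_{k,q}(\bar s, \bar t)\, \bar v.
\end{equation*}
Similarly one writes $B_{k',q'}^* = \psi_{k',q'}(\bar s, \bar t)\, \bar v$. Then, using commutativity of $\Area^{U(n)*}$ (Proposition~\ref{prop:co-algebra}), I compute
\begin{equation*}
B_{k,q}^* * B_{k',q'}^* \;=\; \psi_{k,q}(\bar s, \bar t)\, \psi_{k',q'}(\bar s, \bar t)\, \bar v^{\,2}.
\end{equation*}
Finally, applying Theorem~\ref{thm:vbar} ($\bar v^{\,2} = 0$) yields the conclusion.

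There is no real obstacle here: all the work has been done in proving that $\bar v$ generates a principal ideal containing every $B_{k,q}^*$ (Lemma~\ref{lem:Bideal}) and that $\bar v^{\,2}$ vanishes (Theorem~\ref{thm:vbar}). The lemma is really just the statement that the product of any two elements in the image of the ``$\bar v$-ideal'' is zero in this algebra.
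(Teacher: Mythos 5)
Your proposal is correct and is exactly the argument the paper gives: the paper's proof of Lemma~\ref{lem:BB0} simply cites Lemma~\ref{lem:Bideal} and the relation $\bar v^2=0$ from Theorem~\ref{thm:vbar}, and you have spelled out precisely what that citation means.
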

\begin{proof}
 This follows from Lemma~\ref{lem:Bideal} and $\bar v^2=0$. 
\end{proof}

An immediate consequence of Lemma~\ref{lem:BB0} and \eqref{eq:B*equals} is  
\begin{align}  N_{k,q}^* * N_{k',p'}^* = 
  \frac{(k-2q)(k'-2q')}{4(n-k+q)(n-k'+q')} \bigg( & \frac{2(n-k'+q')}{k'-2q'} \Delta_{k,q}^* * N_{k',q'}^* \label{eq:NN}  \\
 &+\frac{2(n-k+q)}{k-2q} \Delta_{k',q'}^* * N_{k,q}^*-\Delta_{k,q}^* * \Delta_{k',q'}^*\bigg).  \notag
\end{align}
This is the final ingredient we needed to obtain explicit local kinematic formulas. Indeed, Proposition~\ref{prop:Delta_st} gives us an explicit expression for $\Delta_{k,q}^*$ 
in terms of $\bar s$ and $\bar t$.
Hence, using Lemma~\ref{lem:sbartbar} repeatedly, we can compute $\Delta_{k,q}^* * \Delta_{k',q'}^*$ and $\Delta_{k,q}^* * N_{k',q'}^*$. Using relation \eqref{eq:NN} we can also compute $N_{k,q}^* * N_{k',p'}^*$.

Let us demonstrate this general procedure in a few examples.

\begin{example}
In the complex plane --- the simplest non-trivial case --- it is not difficult to write down the full array of kinematic formulas. We have
\begin{align*} A(\Delta_{3,1})  =  &(\Delta_{0,0}\otimes \Delta_{3,1}+\Delta_{3,1}\otimes \Delta_{0,0}) + \frac{2}{3} \left(\Delta_{1,0}\otimes \Delta_{2,0} +\Delta_{2,0}\otimes \Delta_{1,0} \right)\\
                                    &+ \frac{1}{3}\left(  \Delta_{1,0}\otimes \Delta_{2,1} +\Delta_{2,1}\otimes \Delta_{1,0}\right)+ \frac{1}{3}\left( N_{1,0} \otimes \Delta_{2,0} +  \Delta_{2,0} \otimes N_{1,0} \right) \\
       &  -\frac{2}{3}\left( N_{1,0} \otimes \Delta_{2,1} +  \Delta_{2,1} \otimes N_{1,0} \right),\\
 A(\Delta_{2,1})  =  &\left(\Delta_{0,0}\otimes \Delta_{2,1}+\Delta_{2,1}\otimes \Delta_{0,0}\right) + \frac{4}{9\pi }\left(  \Delta_{1,0}\otimes N_{1,0} +N_{1,0}\otimes \Delta_{1,0}\right)\\
                                    & + \frac{8}{9\pi} \Delta_{1,0}\otimes \Delta_{1,0} + \frac{2}{9\pi }N_{1,0} \otimes N_{1,0},  \\
 A(\Delta_{2,0})  =  &\left(\Delta_{0,0}\otimes \Delta_{2,1}+\Delta_{2,1}\otimes \Delta_{0,0}\right) -\frac{4}{9\pi }\left(  \Delta_{1,0}\otimes N_{1,0} +N_{1,0}\otimes \Delta_{1,0}\right)\\
                                    & + \frac{16}{9\pi} \Delta_{1,0}\otimes \Delta_{1,0} -\frac{8}{9\pi }N_{1,0} \otimes N_{1,0} 
\end{align*}
and the trivial formulas 
\begin{align*}A(\Delta_{1,0}) & =  \Delta_{0,0}\otimes \Delta_{1,0}+\Delta_{1,0}\otimes \Delta_{0,0}, \\
	      A(N_{1,0}) & =  \Delta_{0,0}\otimes N_{1,0}+N_{1,0}\otimes \Delta_{0,0}, \\
	      A(\Delta_{0,0})  & =  \Delta_{0,0}\otimes \Delta_{0,0}.
\end{align*}  
 Let us show how the coefficients of $N_{1,0} \otimes N_{1,0}$ in  $A(\Delta_{2,1})$ and $A(\Delta_{2,0})$ are obtained. By Proposition~\ref{prop:Delta_st}, 
$$\Delta_{1,0}^* = \frac{2}{3} \bar t.$$
Using Lemma~\ref{lem:sbartbar}, we find that
$$\Delta_{1,0}^* * \Delta_{1,0}^* = \frac{2}{3}\bar t* \Delta_{1,0}^*= \frac{8}{9\pi}\left(2\Delta_{2,0}^* + \Delta_{2,1}^*\right)$$
and
$$\Delta_{1,0}^* * N_{1,0}^*=  \frac{2}{3}\bar t * N_{1,0}^* = \frac{4}{9\pi}\left(-\Delta_{2,0}^* + \Delta_{2,1}^*\right).$$
From this we can read of the coefficients of $\Delta_{1,0}\otimes \Delta_{1,0}$ and $\Delta_{1,0}\otimes N_{1,0}$ in $A(\Delta_{2,1})$ and $A(\Delta_{2,0})$. 
Using \eqref{eq:NN}, we obtain 
$$N_{1,0}^* * N_{1,0}^* = \Delta_{1,0}^* * N_{1,0}^* -\frac{1}{4} \Delta_{1,0}^* * \Delta_{1,0}^* = \frac{2}{9\pi} \left( \Delta_{2,1}^*  - 4\Delta_{2,0}^*\right)$$
which gives us the coefficients of $N_{1,0} \otimes N_{1,0}$.

\end{example}

\begin{example}
 In $\CC^3$, let us determine the coefficient of $N_{2,0}\otimes N_{3,1}$ in $A(\Delta_{5,2})$. By equation \eqref{eq:NN},
$$N_{2,0}^* * N_{3,1}^* = \frac{1}{2} \left(  2 \Delta_{2,0}^* * N_{3,1}^* + \Delta_{3,1}^* * N_{2,0}^* - \Delta_{2,0}^* * \Delta_{3,1}^*\right).$$ 
Using Proposition~\ref{prop:Delta_st} and Lemma~\ref{lem:sbartbar}, we find that
$$\Delta_{3,1}^* * N_{2,0}^* =\frac{2\pi}{9}\bar t \left(\bar s * N_{2,0}^* \right)= \frac{\bar t}{18}\left( \Delta_{4,2}^* -6 \Delta_{4,1}^*\right)= -\frac{5}{18}\Delta_{5,2}^*$$
and
$$ \Delta_{3,1}^* * \Delta_{2,0}^* =  \frac{2\pi}{9}\bar t \left(\bar s * \Delta_{2,0}^* \right) = \frac{\bar t}{18}\left( \Delta_{4,2}^* +6 \Delta_{4,1}^*\right)=\frac{7}{18} \Delta_{5,2}^*. $$
Similarly, we obtain
$$\Delta_{2,0}^* * N_{3,1}^*  = \left( \frac{\pi}{8} \bar t^2 - \frac{\pi}{12} \bar s\right) * N_{3,1}^*=  \frac{1}{9} \Delta_{5,2}^*$$
and hence
$$N_{2,0}^* * N_{3,1}^* = \frac{1}{2}\left( \frac{2}{9}-\frac{5}{18}-\frac{7}{18}\right) \Delta_{5,2}^*=  -\frac{2}{9} \Delta_{5,2}^*.$$
We conclude that the coefficient of $N_{2,0}\otimes N_{3,1}$ in $A(\Delta_{5,2})$ equals $-\frac{2}{9}$.
\end{example}

As the dimension $n$ increases, computing explicit kinematic formulas by hand does not become more difficult, but increasingly tedious. 
Since the procedure itself is very simple, it can be implemented in any computer algebra package.

\section*{Appendix}

The purpose of this appendix is to provide the explicit expressions of certain Rumin differentials we needed in Section~\ref{sec:vbar2}. 

Recall that if $M$ is a contact manifold of dimension $2n-1$ with global contact form $\alpha$, then there exists a canonical 
second order differential operator $D\colon \Omega^{n-1}(M)\to \Omega^{n}(M)$ called Rumin differential. It is defined as follows:
$$D\omega = d(\omega + \alpha \wedge \xi ),$$
where $\xi\in \Omega^{n-2}(M)$ is chosen such that $d(\omega + \alpha \wedge \xi )$ is a multiple of the contact form. Rumin \cite{rumin94} showed
 that such a form $\xi$ always exists and that moreover $\alpha\wedge \xi$ is unique. Recall also that the Reeb vector field $T$ is the unique smooth vector field on $M$ such that
$$T\lrcorner \alpha = 1 \qquad \text{and}\qquad T\lrcorner d\alpha =0.$$ 
Using the Reeb vector field we may reformulate Rumin's theorem as follows: For every $(n-1)$-form $\omega$ on $M$ there exists unique form $\xi=:Q(\omega)$ with $T\lrcorner \xi=0$ such that $d(\omega + \alpha \wedge \xi)$ is 
a multiple of the contact form.

On the sphere bundle $S\CC^n$ the standard contact form is given by 
$$\alpha=\sum_{i=1}^n \xi_i dx_i + \eta_i dy_i,$$
 which agrees with the notation we have used so far. We remark that $f^* Q(\omega)=Q(f^*\omega)$ for every smooth map $f\colon M \to M$ satisfying $f^* \alpha =\alpha$. 
Note that, in particular, every isometry of $\CC^n$ lifted to the sphere bundle has this property. This is very useful for us: Since we compute Rumin differentials of vector-valued forms $\omega$ invariant under the 
$U(n)$-action \eqref{eq:G-action}, 
it will be sufficient to compute $Q(\omega)$ at a single point $p$. In the following we choose $p=(0,e_1)$.

\begin{proposition*}
\begin{align} 
 D(Jw \; \nu_1 \wedge \theta_2^{n-1}) =\ & \alpha \wedge \Big( 2(n-1) J^*\nu_0 \wedge \nu_1 \wedge \beta - J^*w \; \nu_0 \wedge \theta_2 - 2 w \; J^* \nu_0 \wedge \theta_2 \label{eq:D2}\\
					& -(n-1) J^*w \; \nu_1\wedge  \theta_1\Big) \wedge \theta_2^{n-2},\notag\\
 D(w^4 \beta \wedge \theta_1 \wedge \theta_0^{n-2}) =\ & 4w^3 \alpha \wedge \Big( 7\nu_0\wedge\beta \wedge \gamma + 2 w \gamma\wedge \theta_1 + J^*w \gamma\wedge \theta_s - \nu_0\wedge \theta_s   \label{eq:D3} \\
					      & + 2 J^*\nu_0 \wedge \theta_1 - 3w \beta \wedge \theta_0\Big) \wedge \theta_0^{n-2}\notag\\
					& +12 w^2 \alpha \wedge \Big( J^*w \nu_0\wedge \theta_1 -\nu_0 \wedge J^*\nu_0\wedge \beta\Big) \wedge \theta_0^{n-2},\notag\\
 D(w^4 \gamma \wedge \theta_2\wedge \theta_0^{n-2}) =\ & 2w^4 \alpha \wedge \Big( (2n+3) \beta \wedge \theta_0 - (n+1) \gamma \wedge \theta_1 \Big) \wedge \theta_0^{n-2}	\label{eq:D4}\\
						 & + 4w^3 \alpha \wedge \Big( \nu_0 \wedge \theta_s + 2 J^*\nu_1 \wedge \theta_0 - (2n+3) \nu_0 \wedge \beta\wedge \gamma - J^*w \gamma\wedge \theta_s\Big) \wedge \theta_0^{n-2}\notag\\
						 & -12 w^2 \alpha \wedge \nu_0\wedge J^* \nu_1 \wedge \gamma \wedge \theta_0^{n-2},\notag
\intertext{for $n\geq 2$. If $n\geq 3$, then}
 D(w^4 \gamma\wedge \theta_1^2 \wedge \theta_0^{n-3}) =\ & 8w^4  \alpha \wedge \Big( \gamma \wedge \theta_1 - \beta \wedge \theta_0 \Big) \wedge \theta_0^{n-2} 		\label{eq:D5}\\
							& +4 w^3 \alpha \wedge \Big( 4 \nu_0 \wedge \beta \wedge \gamma  + 4 J^*\nu_0 \wedge \theta_1 \Big) \wedge \theta_0^{n-2}\notag\\
							& -24 w^2 \alpha \wedge \nu_0 \wedge J^*\nu_0 \wedge \gamma \wedge \theta_1 \wedge \theta_0^{n-3}.\notag
\end{align}

\end{proposition*}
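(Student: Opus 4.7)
The strategy is to verify each of the four identities pointwise, reducing to a single base point $p=(0,e_1)\in S\CC^n$ by $\overline{U(n)} = U(n)\ltimes \CC^n$-equivariance. Both sides of every identity are translation-invariant $\Sym\CC^n$-valued forms that are $U(n)$-covariant under \eqref{eq:G-action}; because the Rumin operator is defined purely from the contact structure, it commutes with any contact diffeomorphism, in particular with the action of $\overline{U(n)}$ on $S\CC^n$ (which preserves $\alpha$ and acts linearly on the $\Sym\CC^n$-coefficient). Since $\overline{U(n)}$ acts transitively on $S\CC^n$, it suffices to check equality of the two sides at $p$.

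At $p$ one has $\alpha|_p = dx_1$, Reeb vector field $T=\partial/\partial x_1$, and local coordinates transverse to $T$ on the contact distribution given by $y_1,x_2,y_2,\ldots,x_n,y_n,\eta_1,\xi_2,\eta_2,\ldots,\xi_n,\eta_n$ (the sphere constraint $|\xi|^2+|\eta|^2=1$ eliminates $d\xi_1$ at $p$). In these coordinates the Bernig--Fu generators $\beta,\gamma,\theta_0,\theta_1,\theta_2,\theta_s$ and the $\Sym\CC^n$-valued forms $w,\nu_0,\nu_1,J^*w,J^*\nu_0,J^*\nu_1$ restrict to explicit constant-coefficient monomials, and their exterior derivatives at $p$ follow immediately from their defining formulas.

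For each input form $\omega$ I would proceed as follows. Compute $d\omega$ by the graded Leibniz rule and decompose $d\omega = \alpha\wedge R + S$ with $T\lrcorner S = 0$. Rumin's theorem guarantees a unique $\xi = Q(\omega)$ with $T\lrcorner\xi = 0$ such that $d\alpha\wedge\xi\equiv -S\pmod{\alpha}$; writing $\xi$ as a combination of $\overline{U(n)}$-invariant monomials of the correct bidegree reduces this congruence to a finite linear system in the coefficients of that ansatz. Once $\xi$ is determined, $D\omega = d\omega + d(\alpha\wedge\xi) = \alpha\wedge (R + d\xi)$ yields the displayed formula at $p$, which by invariance propagates to all of $S\CC^n$.

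The main obstacle is purely combinatorial: the forms $w^4\,\beta\wedge\theta_1\wedge\theta_0^{n-2}$, $w^4\,\gamma\wedge\theta_2\wedge\theta_0^{n-2}$, and $w^4\,\gamma\wedge\theta_1^{\,2}\wedge\theta_0^{n-3}$ have high symmetric-tensor degree in $w$ together with several additional form factors, so Leibniz generates many summands whose $\Sym^4\CC^n$-coefficients must be tracked symbolically. What makes the calculation feasible is that at $p$ almost all fiber coordinates $\xi_i,\eta_i$ and base coordinates $x_i,y_i$ vanish, so the vast majority of wedge monomials in $\beta,\gamma,\theta_j$ evaluate to zero; the dimension of the space of $\overline{U(n)}$-invariant forms of the requisite bidegree is small, so the ansatz for $Q(\omega)$ contains only a handful of free coefficients and the resulting linear system for $\xi$, and then $D\omega$, can be solved directly to recover \eqref{eq:D2}--\eqref{eq:D5}.
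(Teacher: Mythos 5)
Your strategy coincides with the paper's: reduce to the base point $p=(0,e_1)$ by $U(n)$-equivariance, determine $\xi = Q(\omega)$ by solving a small linear system among invariant monomials of the right bidegree, and then read off $D\omega$ from $d(\omega+\alpha\wedge\xi)$. One sign slip worth flagging: your final simplification $D\omega = \alpha\wedge(R+d\xi)$ should be $\alpha\wedge(R-d\xi)$ --- equivalently $\alpha\wedge(T\lrcorner d\omega - d\xi)$, the form the paper uses --- since $d(\alpha\wedge\xi)=d\alpha\wedge\xi - \alpha\wedge d\xi$ and, once $T\lrcorner\xi = 0$ and $T\lrcorner S=0$ are imposed, the term $d\alpha\wedge\xi$ is forced to cancel $S$ exactly, leaving only $\alpha\wedge R - \alpha\wedge d\xi$.
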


Note that a the point $(0,e_1)\in S\CC^n$ we have
$$\alpha= dx_1, \qquad \beta=dy_1,\qquad \gamma=d\eta_1, \qquad \text{and}\qquad d\xi_1=0.$$
Recall also that 
$$ T\lrcorner \alpha = 1, \qquad T \lrcorner \beta =0, \qquad T \lrcorner \gamma =0,$$
and 
$$  T\lrcorner \theta_0 = 0, \qquad  T\lrcorner \theta_1 = \gamma, \qquad  T\lrcorner \theta_2 = \beta.$$
Moreover,
$$ T\lrcorner \nu_0 =  T\lrcorner J^*\nu_0 =0, \qquad T\lrcorner \nu_1 =w, \qquad \text{and}\qquad \qquad T\lrcorner J^*\nu_1 = J^*w.$$

\begin{lemma*}

\begin{align}
Q(Jw \; \nu_1 \wedge \theta_2^{n-1}) =\ & J^*w(w \theta_2 -(n-1) v_1 \wedge \beta) \wedge \theta_2^{n-2},\label{eq:Q2} \\
\intertext{and up to multiples of the contact form}
 Q(w^4 \beta \wedge \theta_1 \wedge \theta_0^{n-2}) \equiv\ & w^3 \big( w \theta_s - 4 J^*w \theta_1 - 4 \beta \wedge J^*\nu_0  - 6 w \beta \wedge \gamma  \big) \wedge \theta_0^{n-2} ,\label{eq:Q3}\\
 Q(w^4 \gamma \wedge \theta_2\wedge \theta_0^{n-2}) \equiv\ &  w^3 \big( 2(n+1)w \beta \wedge \gamma \wedge \theta_0 + 2(n-2) \gamma \wedge \nu_0\wedge \theta_s - (n-1) w \theta_0 \wedge \theta_s \label{eq:Q4} \\
& - 4 \gamma \wedge J^*\nu_1 \wedge \theta_0  \big)\wedge \theta_0^{n-3},\notag\\
Q(w^4 \gamma\wedge \theta_1^2 \wedge \theta_0^{n-3}) \equiv \ & 2 w^3 \big( w \theta_0 \wedge \theta_s -2w \beta \wedge \gamma \wedge \theta_0 -2 \gamma \wedge \nu_0 \wedge \theta_s  -4 \gamma \wedge J^*\nu_0 \wedge \theta_1
\big) \wedge \theta_0^{n-3}. \label{eq:Q5}
\end{align}

\end{lemma*}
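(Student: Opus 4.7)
The plan is to verify each of the four identities pointwise at the single point $p = (0, e_1) \in S\CC^n$ and extend by equivariance. The reduction is justified because every form appearing in the identities (both $\omega$ and the proposed $\xi$) is invariant under the $U(n)$-action \eqref{eq:G-action}, the Rumin operator $Q$ commutes with pullback along $\alpha$-preserving diffeomorphisms (in particular along lifts of elements of $U(n)$), and the $U(n)$-orbit of $p$, together with translation invariance, fills out $S\CC^n$. By Rumin's theorem, $Q(\omega)$ is uniquely determined, modulo multiples of $\alpha$, by the requirement that $d(\omega + \alpha \wedge Q(\omega)) \in \alpha \wedge \Omega(S\CC^n)$, and the additional horizontality condition $T \lrcorner Q(\omega) = 0$ then pins $Q(\omega)$ down exactly. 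For \eqref{eq:Q2} I would verify both conditions; for \eqref{eq:Q3}--\eqref{eq:Q5} only the weaker congruence modulo $\alpha$ is asserted, so only the second condition is needed.

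Horizontality of the candidate $\xi$ in \eqref{eq:Q2} follows immediately from the tabulated Reeb contractions ($T \lrcorner \nu_1 = w$, $T \lrcorner \theta_2 = \beta$, with zero contractions on the remaining factors) together with $\beta \wedge \beta = 0$: the contributions of the two summands of $\xi$ cancel. The main task is then to check that $d(\omega + \alpha \wedge \xi) \in \alpha \wedge \Omega$ at $p$. At $p$ the coordinates vanish except $\xi_1 = 1$, the defining relation of the sphere bundle forces $d\xi_1|_p = 0$, and one has $\alpha = dx_1$, $\beta = dy_1$, $\gamma = d\eta_1$. Using the explicit definitions of $\nu_0, \nu_1, J^*\nu_0, J^*\nu_1, \theta_0, \theta_1, \theta_2, \theta_s$, their exterior derivatives follow from repeated Leibniz, and the expansion of $d(\omega + \alpha \wedge \xi)|_p$ reduces to a finite sum of monomials in the surviving one-forms. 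Collecting those monomials not divisible by $dx_1$ and checking that they cancel is then a routine verification.

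\textbf{The main obstacle} will be purely combinatorial. In \eqref{eq:Q4} and \eqref{eq:Q5} the input form $\omega$ is a wedge product of seven or more factors (e.g.\ $\omega = w^4 \gamma \wedge \theta_2 \wedge \theta_0^{n-2}$), each of which is itself a sum of monomials, so the Leibniz expansion of $d(\omega + \alpha \wedge \xi)$ produces a proliferation of terms. Organizing them by their symmetric-tensor content (monomials in $\{w, J^*w, \nu_0, J^*\nu_0, \nu_1, J^*\nu_1\}$) and by their codegree in $\theta_0^{n-2}$ or $\theta_0^{n-3}\theta_1^2$ keeps the bookkeeping tractable. No new conceptual ingredients beyond Rumin's construction and the $U(n)$-invariance reduction are required.
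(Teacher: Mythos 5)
Your proposal is correct and follows the same route as the paper's own proof: exploit $U(n)$-equivariance of $Q$ to reduce to the single point $p=(0,e_1)$, then verify at $p$ that $\alpha\wedge d(\omega+\alpha\wedge\xi)=0$ (equivalently $\alpha\wedge d\omega+\alpha\wedge d\alpha\wedge\xi=0$), checking horizontality $T\lrcorner\xi=0$ as well in the case of \eqref{eq:Q2}. The only slip is a harmless misnumbering near the end of your first paragraph --- for \eqref{eq:Q3}--\eqref{eq:Q5} the condition that must be checked is the $\alpha$-ideal condition, not the horizontality condition, and your second paragraph makes clear that this is what you in fact intend to verify.
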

\begin{proof} We write $Q(\omega)$ for the left-hand side of \eqref{eq:Q2} and $\xi$ for the right-hand side. We first show 
\begin{equation}
\label{eq:vertical} 
 \alpha \wedge d\omega + \alpha \wedge d \alpha \wedge \xi =0.
\end{equation}
Note that at $p=(0,e_1)$ we have 
\begin{equation}
 \label{eq:alpha_theta2}
\alpha \wedge \theta_2^{n-2}= -(n-2)! \sum_{i=2}^n \partial y_1 \wedge \partial x_i \wedge \partial y_i \lrcorner \vol_{\CC^n}
\end{equation}
and hence
\begin{align*}
 -(n-1) J^*w \nu_1 \wedge \beta \wedge d\alpha \wedge \alpha \wedge \theta_2^{n-2} & = (n-1)! J^*w \nu_1 \wedge d\alpha \wedge \left(\sum_{i=2}^n  \partial x_i \wedge \partial y_i \lrcorner \vol_{\CC^n} \right)\\
 & = (n-1)! e_{\bar 1}  \left(\sum_{i=2}^n e_{\bar i} d\xi_i - e_{ i} d\eta_i\right) \wedge \vol_{\CC^n}.
\end{align*}
Moreover, we have at the point $p$ 
$$w J^* w \alpha \wedge d\alpha \wedge  \theta_2^{n-1} = -(n-1)! e_1e_{\bar 1}  d\eta_1 \wedge \vol_{\CC^n}$$
and 
$$\alpha \wedge d\omega = -(n-1)! e_{\bar 1} J^* \nu_0 \wedge \vol_{\CC^n}.$$
This proves \eqref{eq:vertical} and, since $T\lrcorner \xi=0$, we conclude that $Q(\omega)=\xi$ at $p$ and the $U(n)$-invariance implies now \eqref{eq:Q2}.

To prove \eqref{eq:Q3} first note that at $p$ we have
$$\theta_0^{n-2}= (n-2)! \sum_{i=2}^n \partial  \eta_1 \wedge \partial \xi_i \wedge \partial \eta_i \lrcorner \vol_{T_pS\CC^n},$$
where 
$$\vol_{T_pS\CC^n} = d\eta_1\wedge d\xi_2 \wedge d\eta_2 \wedge \cdots \wedge d\xi_n \wedge d\eta_n.$$
Put $\omega'= w^4 \beta \wedge \theta_1$ and 
$$\xi'=w^3 \big( w \theta_s - 4 J^*w \theta_1 - 4 \beta \wedge J^*\nu_0  - 6 w \beta \wedge \gamma  \big).$$
For  $i=2,\ldots,n$  we calculate
\begin{align*}
 \alpha \wedge d\omega' \wedge \left( \partial \eta_1 \wedge \partial \xi_i \wedge \partial \eta_i \lrcorner \vol_{T_pS\CC^n} \right) & =  \alpha \wedge \Big(4e_1^3 e_{\bar 1} d\eta_1\wedge dy_1 \wedge (dx_i \wedge d\eta_i -dy_i\wedge d\xi_i)\\
& \qquad \qquad - 4e_1^3 (e_i dx_i + e_{\bar i} dy_i)\wedge dy_1 \wedge d\xi_i \wedge d\eta_i \\ 
& \qquad \qquad -2e_1^4 dx_i\wedge dy_i\wedge d\xi_i \wedge d\eta_i \Big) \wedge \left( \partial \xi_i \wedge \partial \eta_i \lrcorner \vol_{T_pS\CC^n} \right)\\
& =-\alpha \wedge d\alpha \wedge \xi' \wedge \left(\partial \eta_1 \wedge \partial \xi_i \wedge \partial \eta_i \lrcorner \vol_{T_pS\CC^n} \right).
\end{align*}
This establishes \eqref{eq:Q3} and \eqref{eq:Q4} is proved in the same way.

We come now to the proof of \eqref{eq:Q5}. We split $d\omega$ into two summands
$$d\omega= 2w^4 \theta_1^2\wedge \theta_0^{n-2}  + 4w^3 \nu_0 \wedge \gamma \wedge \theta_1^2 \wedge \theta_0^{n-3} =:\Omega_1 + \Omega_2$$
and similarly write $\xi= \Xi_1 + \Xi_2$ with
$$\Xi_1= 2 w^4 \big(\theta_s -2 \beta \wedge \gamma \big) \wedge \theta_0^{n-2}$$
and 
$$\Xi_2 =  4 w^3  \big(   \nu_0 \wedge \theta_s + 2   J^*\nu_0 \wedge \theta_1 \big) \wedge \gamma \wedge \theta_0^{n-3}.$$
As in the proof of \eqref{eq:Q3} and \eqref{eq:Q4}, one checks that $\alpha \wedge (\Omega_1 + d\alpha \wedge \Xi_1)=0$ at $p$. Proving  
$\alpha \wedge (\Omega_2 + d\alpha \wedge \Xi_2)=0$ at $p$ requires a bit more work. First observe that at the point $p$,
\begin{align*}\gamma \wedge \theta_0^{n-3}  &= (n-3)! \sum_{2\leq i<j\leq n}  \left( \partial \xi_i \wedge \partial \eta_i \wedge \partial \xi_j \wedge \partial \eta_j   \right) \lrcorner \vol_{T_pS\CC^n}\\
		& =: (n-3)! \sum_{2\leq i<j\leq n}  v_{ij}.
\end{align*}
We put 
$$\sigma_i = d\xi_i \wedge dx_i + d\eta_i \wedge dy_i \qquad \text{and} \qquad  \sigma_i' =  d\xi_i \wedge dy_i -d\eta_i \wedge dx_i,$$
for $i=1,\ldots, n$. 
Then $d\alpha = -\theta_s = \sum_i \sigma_i$ and $\theta_1= \sum_i \sigma_i'$. We compute at $p$
$$\alpha \wedge ( \theta_1^2 -(d\alpha)^2 ) \wedge v_{ij} =
 2 \alpha \wedge (\sigma_i' \wedge \sigma_j' - \sigma_i \wedge \sigma_j  )  \wedge v_{ij}, \qquad 2\leq i<j\leq n$$
 and
$$\alpha \wedge (d\alpha \wedge  \theta_1  ) \wedge v_{ij} =2 \alpha \wedge (\sigma_i' \wedge \sigma_j - \sigma_i \wedge \sigma_j'  )  \wedge v_{ij}, \qquad 2\leq i<j\leq n.$$
Hence
$$ \alpha \wedge (  \nu_0 \wedge( \theta_1^2 -(d\alpha)^2 )  + J^*\nu_0 \wedge d\alpha \wedge  \theta_1 ) \wedge v_{ij}= 0$$
and we conclude that also $\alpha \wedge (\Omega_2 + d\alpha \wedge \Xi_2)=0$ at $p$.
\end{proof}

\begin{proof}[Proof of the Proposition]
Since $d\omega+ d\alpha\wedge \xi $, $\xi\equiv Q(\omega)$, is a multiple of $\alpha$, we have
$$D\omega= \alpha \wedge \left( T\lrcorner (d\omega + d\alpha \wedge \xi) - d \xi\right).$$
If $\xi = Q(\omega)$, then 
$$D\omega= \alpha \wedge \left( T\lrcorner d\omega  - d \xi\right).$$
Using this, \eqref{eq:D2} follows immediately from
\begin{align*} T\lrcorner d\omega &=  (n-1) J^*\nu_0\wedge \nu_1 \wedge \beta \wedge \theta_2^{n-2} - w J^*\nu_0 \wedge \theta_2^{n-1}\\
\intertext{and}
d\xi &= \left( -(n-1) J^*\nu_0 \wedge \nu_1 \wedge  \beta +  (w J^*\nu_0  + J^* w \nu_0 )\wedge \theta_2 + (n-1) J^*w \nu_1 \wedge \theta_1  \right) \wedge \theta_2^{n-2}.
\end{align*}
In the same way \eqref{eq:D3}, \eqref{eq:D4}, and \eqref{eq:D5} can be proved.

\end{proof}

\begin{bibdiv}
\begin{biblist}

\bib{abardia_etal12}{article}{
   author={Abardia, J.},
   author={Gallego, E.},
   author={Solanes, G.},
   title={The Gauss-Bonnet theorem and Crofton-type formulas in complex
   space forms},
   journal={Israel J. Math.},
   volume={187},
   date={2012},
   pages={287--315},
}

\bib{alesker99a}{article}{
   author={Alesker, S.},
   title={Description of continuous isometry covariant valuations on convex
   sets},
   journal={Geom. Dedicata},
   volume={74},
   date={1999},
   pages={241--248},

}

\bib{alesker99b}{article}{
   author={Alesker, S.},
   title={Continuous rotation invariant valuations on convex sets},
   journal={Ann. of Math. (2)},
   volume={149},
   date={1999},
   pages={977--1005},
}

\bib{alesker01}{article}{
    title={Description of translation invariant valuations on convex sets with solution of P. McMullen's conjecture},
    author={Alesker, S.},
    journal={Funct. Anal.},
    volume={11},
    date={2001},
    pages={244--272}
}

\bib{alesker03}{article}{
	title={Hard Lefschetz theorem for valuations, complex integral geometry, and unitarily invariant valuations},
	author={Alesker, S.},
	journal={J. Differential Geom.},
	volume={63},
	date={2003},
	pages={63--95}
}

\bib{alesker04}{article}{
	title={The multiplicative structure on continuous polynomial valuations}, 
	author={Alesker, S.},
	journal={Geom. Funct. Anal.},
	volume={14},
	date={2004},
	pages={1--26}
}

\bib{alesker06}{article}{
   author={Alesker, S.},
   title={Theory of valuations on manifolds. I. Linear spaces},
   journal={Israel J. Math.},
   volume={156},
   date={2006},
   pages={311--339},
}

\bib{alesker11}{article}{
   author={Alesker, S.},
   title={A Fourier-type transform on translation-invariant valuations on
   convex sets},
   journal={Israel J. Math.},
   volume={181},
   date={2011},
   pages={189--294},
}

\bib{alesker_fu08}{article}{
   author={Alesker, S.},
   author={Fu, J. H. G.},
   title={Theory of valuations on manifolds. III. Multiplicative structure
   in the general case},
   journal={Trans. Amer. Math. Soc.},
   volume={360},
   date={2008},
   pages={1951--1981},
}

\bib{alesker_etal11}{article}{
   author={Alesker, S.},
   author={Bernig, A.},
   author={Schuster, F. E.},
   title={Harmonic analysis of translation invariant valuations},
   journal={Geom. Funct. Anal.},
   volume={21},
   date={2011},
   pages={751--773},
}

\bib{beisbart_etal06}{article}{
  author={Beisbart, C.},
  author={Barbosa, M. S.},
  author={Wagner, H.},
  author={da F. Costa, L.},
  title={Extended merphometric analysis of neuronal cells with Minkowski valuations},
  journal={Eur. Phys. J. B },
  volume={52},
  pages={531--546},
}

\bib{beisbart_etal02}{article}{
  author={Beisbart, C.},
  author={Dahlke, R.},
  author={Mecke, K.},
  author={Wagner, H.},
  title={Vector- and tensor-valued descriptors for spacial patterns},
  book={
	editor={Mecke, K.},
	editor={Stoyan, D.},
	title={Morphology of Condensed Matter},
	series={Lecture Notes in Physics},
	volume={600},
	publisher={Springer},
	place={Berlin}
      },
  pages={283--260},
}

\bib{bernig09}{article}{
   author={Bernig, A.},
   title={A product formula for valuations on manifolds with applications to
   the integral geometry of the quaternionic line},
   journal={Comment. Math. Helv.},
   volume={84},
   date={2009},

   pages={1--19},
}

\bib{bernig11}{article}{
   author={Bernig, A.},
   title={Integral geometry under $G_2$ and ${\rm Spin}(7)$},
   journal={Israel J. Math.},
   volume={184},
   date={2011},
   pages={301--316},
}

\bib{bernig_fu06}{article}{
   author={Bernig, A.},
   author={Fu, J. H. G.},
   title={Convolution of convex valuations},
   journal={Geom. Dedicata},
   volume={123},
   date={2006},
   pages={153--169},
}

\bib{bernig_fu11}{article}{
   author={Bernig, A.},
   author={Fu, J. H. G.},
   title={Hermitian integral geometry},
   journal={Ann. of Math. (2)},
   volume={173},
   date={2011},
   pages={907--945},
}

\bib{bernig_etal13}{article}{
   author={Bernig, A.},
   author={Fu, J. H. G.},
   author={Solanes, G.},
   title={Integral geometry of complex space forms},
   eprint={  	arXiv:1204.0604 [math.DG]}
}

\bib{blaschke55}{book}{
   author={Blaschke, W.},
   title={Vorlesungen \"uber Integralgeometrie},
   publisher={Deutscher Verlag der Wissenschaften},
   place={Berlin},
   date={1955},
}

\bib{chern52}{article}{
   author={Chern, S.-S.},
   title={On the kinematic formula in the Euclidean space of $n$ dimensions},
   journal={Amer. J. Math.},
   volume={74},
   date={1952},
   pages={227--236},

}

\bib{federer69}{book}{
   author={Federer, H.},
   title={Geometric measure theory},
   publisher={Springer-Verlag, New York},
   date={1969},
}

\bib{folland99}{book}{
   author={Folland, G. B.},
   title={Real analysis},
   edition={2},
   publisher={John Wiley \& Sons Inc.},
   place={New York},
   date={1999},

}

\bib{fu90}{article}{
   author={Fu, J. H. G.},
   title={Kinematic formulas in integral geometry},
   journal={Indiana Univ. Math. J.},
   volume={39},
   date={1990},
   pages={1115--1154},
}

\bib{fu94}{article}{
   author={Fu, J. H. G.},
   title={Curvature measures of subanalytic sets},
   journal={Amer. J. Math.},
   volume={116},
   date={1994},
   pages={819--880},
}

\bib{fu06}{article}{
   author={Fu, J. H. G.},
   title={Structure of the unitary valuation algebra},
   journal={J. Differential Geom.},
   volume={72},
   date={2006},
   pages={509--533},
}

\bib{greub_etal72}{book}{
   author={Greub, W.},
   author={Halperin, S.},
   author={Vanstone, R.},
   title={Connections, curvature, and cohomology. Vol. I: De Rham cohomology
   of manifolds and vector bundles},
   publisher={Academic Press},
   place={New York},
   date={1972},
}

\bib{haberl12}{article}{
   author={Haberl, C.},
   title={Minkowski valuations intertwining with the special linear group},
   journal={J. Eur. Math. Soc. (JEMS)},
   volume={14},
   date={2012},
   pages={1565--1597},
}

\bib{haberl_parapatits13}{article}{
   author={Haberl, C.},
   author={Parapatits, L.}, 
   title={The centro-affine Hadwiger theorem},
   journal={J. Amer. Math. Soc.},
   status={to appear},
}

\bib{hug_etal07}{article}{
   author={Hug, D.},
   author={Schneider, R.},
   author={Schuster, R.},
   title={The space of isometry covariant tensor valuations},
   journal={Algebra i Analiz},
   volume={19},
   date={2007},
   pages={194--224},   
}

\bib{hug_etal08}{article}{
   author={Hug, D.},
   author={Schneider, R.},
   author={Schuster, R.},
   title={Integral geometry of tensor valuations},
   journal={Adv. in Appl. Math.},
   volume={41},
   date={2008},
   pages={482--509},
}

\bib{klain99}{article}{
   author={Klain, D. A.},
   title={An Euler relation for valuations on polytopes},
   journal={Adv. Math.},
   volume={147},
   date={1999},
   pages={1--34},
}

\bib{klain_rota97}{book}{
   author={Klain, D. A.},
   author={Rota, G.-C.},
   title={Introduction to geometric probability},
   publisher={Cambridge University Press},
   place={Cambridge},
   date={1997},
}

\bib{ludwig03}{article}{
   author={Ludwig, M.},
   title={Ellipsoids and matrix-valued valuations},
   journal={Duke Math. J.},
   volume={119},
   date={2003},
   pages={159--188},
}

\bib{ludwig10}{article}{
   author={Ludwig, M.},
   title={Minkowski areas and valuations},
   journal={J. Differential Geom.},
   volume={86},
   date={2010},
   pages={133--161},
}

\bib{ludwig13}{article}{
  author={Ludwig, M.},
  title={Covariance matrices and valuations},
  journal={Adv. Math.},
  status={to appear},
}

\bib{ludwig_reitzner10}{article}{
   author={Ludwig, M.},
   author={Reitzner, M.},
   title={A classification of ${\rm SL}(n)$ invariant valuations},
   journal={Ann. of Math. (2)},
   volume={172},
   date={2010},
   pages={1219--1267},
}

\bib{mcmullen77}{article}{
   author={McMullen, P.},
   title={Valuations and Euler-type relations on certain classes of convex
   polytopes},
   journal={Proc. London Math. Soc. (3)},
   volume={35},
   date={1977},
   pages={113--135},
}

\bib{mcmullen97}{article}{
   author={McMullen, P.},
   title={Isometry covariant valuations on convex bodies},
   journal={Rend. Circ. Mat. Palermo (2) Suppl.},
   volume={50},
   date={1997},
   pages={259--271},
}

\bib{parapatits_wannerer13}{article}{
  author={Parapatits, L.},
  author={Wannerer, T.},
  title={On the inverse Klain map},
  journal={Duke Math. J.},
  volume={162},
  date={2013},
  pages={1895--1922},
}

\bib{zeilberger96}{book}{
   author={Petkov{\v{s}}ek, M.},
   author={Wilf, H. S.},
   author={Zeilberger, D.},
   title={$A=B$},
   publisher={A K Peters Ltd.},
   place={Wellesley, MA},
   date={1996},
}

\bib{rumin94}{article}{
   author={Rumin, M.},
   title={Formes diff\'erentielles sur les vari\'et\'es de contact},
   journal={J. Differential Geom.},
   volume={39},
   date={1994},
   pages={281--330},
}

\bib{santalo04}{book}{
   author={Santal{\'o}, L. A.},
   title={Integral geometry and geometric probability},
   edition={2},
   publisher={Cambridge University Press},
   place={Cambridge},
   date={2004},
}

\bib{schneider75}{article}{
   author={Schneider, R.},
   title={Kinematische Ber\"uhrma\ss e f\"ur konvexe K\"orper und
   Integralrelationen f\"ur Oberfl\"achenma\ss e},
   journal={Math. Ann.},
   volume={218},
   date={1975},
   pages={253--267},
}

\bib{schneider93}{book}{
   author={Schneider, R.},
   title={Convex bodies: the Brunn-Minkowski theory},
   publisher={Cambridge University Press},
   place={Cambridge},
   date={1993},
}

\bib{schneider_weil08}{book}{
   author={Schneider, R.},
   author={Weil, W.},
   title={Stochastic and integral geometry},
   publisher={Springer-Verlag},
   place={Berlin},
   date={2008},
}

\bib{schroeder_etal10}{article}{
    author={Schr\"oder-Turk, G. E.},
    author={Kapfer, S.},
    author={Breidenbach, B.},
    author={Beisbart, C.},
    author={Mecke, K.},
    title={Tensorial Minkowski functionals and anistropy measures for planar patterns},
    journal={J. Microscopy},
    volume={238},
    pages={57--74},
    date={2010},
}

\bib{schuster10}{article}{
   author={Schuster, F. E.},
   title={Crofton measures and Minkowski valuations},
   journal={Duke Math. J.},
   volume={154},
   date={2010},
   pages={1--30},
}

\bib{wannerer13}{article}{
   author={Wannerer, T.},
   title={The module of unitarily invariant area measures},
   eprint={arXiv:1207.6481 [math.DG]},
}

\end{biblist}
\end{bibdiv}

\end{document}